\numberwithin{equation}{section}
\DeclareMathOperator{\domain}{Dom}
\newtheorem{theor}{Theorem}[section]
\newtheorem{propo}[theor]{Proposition}
\newtheorem{lemma}[theor]{Lemma}
\newtheorem{corollary}[theor]{Corollary}
\newtheorem*{rem*}{Remark}
\newcommand{\N}{\mathbb{N}^d}
\newcommand{\R}{\mathbb{R}^d}
\newcommand{\C}{\mathbb{C}}
\begin{document}
\footnotetext{
\emph{2010 Mathematics Subject Classification:} Primary 47G40; Secondary 31C15, 26A33.\\
\emph{Key words and phrases:} potential operator, fractional integral, Riesz potential, 
	negative power, harmonic oscillator, Laguerre operator, Dunkl harmonic oscillator.

}

\title[Negative powers of Laguerre operators]
	{Negative powers of Laguerre operators}


\author[A. Nowak]{Adam Nowak}
\address{Adam Nowak, \newline
			Instytut Matematyczny,
      Polska Akademia Nauk, \newline
      \'Sniadeckich 8,
      00--956 Warszawa, Poland \newline
			\indent and \newline
			Instytut Matematyki i Informatyki,
      Politechnika Wroc\l{}awska,       \newline
      Wyb{.} Wyspia\'nskiego 27,
      50--370 Wroc\l{}aw, Poland      
      }
\email{Adam.Nowak@pwr.wroc.pl}

\author[K. Stempak]{Krzysztof Stempak}
\address{Krzysztof Stempak,     \newline
      Instytut Matematyki i Informatyki,
      Politechnika Wroc\l{}awska,       \newline
      Wyb{.} Wyspia\'nskiego 27,
      50--370 Wroc\l{}aw, Poland      }  
\email{Krzysztof.Stempak@pwr.wroc.pl}

\begin{abstract}
We study negative powers of Laguerre differential operators in $\R$, $d\ge1$. 
For these operators we prove two-weight $L^p-L^q$ estimates, with ranges of $q$ depending 
on $p$. The case of the harmonic oscillator (Hermite operator) has recently 
been treated by Bongioanni and Torrea by using a straightforward
approach of kernel estimates. Here these results are applied in certain Laguerre settings. 
The procedure is fairly direct for Laguerre function expansions of Hermite type, 
due to some monotonicity properties of the kernels involved. 
The case of Laguerre function expansions of convolution type is less straightforward. 
For half-integer type indices $\alpha$ we transfer the desired results from the Hermite setting 
and then apply an interpolation argument based on a device we call the {\sl convexity principle} 
to cover the continuous range of $\alpha\in[-1/2,\infty)^d$. Finally, we investigate negative powers 
of the Dunkl harmonic oscillator in the context of a finite reflection group acting on $\R$ and 
isomorphic to $\mathbb Z^d_2$. The two weight $L^p-L^q$ estimates we obtain in this setting are essentially 
consequences of those for Laguerre function expansions of convolution type.
\end{abstract}

\maketitle

\section{Introduction} \label{sec:intro}

Consider the fractional integral operator (also referred to as the Riesz potential) 
\begin{equation*}
I^\sigma f(x)=\int_{\R}\frac1{\|x-y\|^{d-\sigma}}f(y)\,dy, \qquad x\in \R,
\end{equation*}
$0<\sigma<d$, defined for any function $f$ for which the above integral is convergent $x$-a.e.;
for instance, $f\in L^p(\R)$ with $1\le p<d\slash \sigma$ is good enough. 

Then, with an appropriate constant $c_\sigma$,
\begin{equation*}
(-\Delta)^{-\sigma \slash 2}f=c_\sigma I^\sigma f, \qquad f\in \mathcal S(\R),
\end{equation*}
where $\Delta=\sum_{j=1}^d \partial_j^2$ is the standard Laplacian in
$\R$, $d\ge1$, and  the negative power $(-\Delta)^{-\sigma \slash 2}$ 
is defined in $L^2(\R)$ by means of the Fourier transform. 

A classical result concerning $I^{\sigma}$ is the following, see e.g. \cite{Duo,St}.
\begin{theor}[Hardy-Littlewood-Sobolev] \label{thm:HLS}
Let $0<\sigma<d$, $1\le p<\frac d\sigma$ and $\frac1q=\frac1p-\frac{\sigma}d$. 
Then for $p>1$ we have the strong type $(p,q)$ estimate
$$
\|I^\sigma f\|_q \lesssim \|f\|_p, \qquad f\in L^p(\R),
$$
while for $p=1$ the weak type $(1,q)$ estimate  holds,
$$
|\{x\in \R\colon |I^\sigma f(x)|>\lambda\}|\lesssim \bigg(\frac{\|f\|_1}{\lambda}\bigg)^q, 
\qquad \lambda>0,\quad f\in L^1(\R).
$$
\end{theor}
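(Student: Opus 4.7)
The plan is to reduce the theorem to boundedness properties of the Hardy--Littlewood maximal function $M$ by establishing Hedberg's pointwise inequality
$$
|I^\sigma f(x)| \lesssim (Mf(x))^{p/q}\|f\|_p^{1-p/q}.
$$
For each $x\in\R$ and an auxiliary parameter $R>0$ I would split
$$
I^\sigma f(x) = \int_{\|y-x\|<R}\frac{f(y)}{\|x-y\|^{d-\sigma}}\,dy + \int_{\|y-x\|\ge R}\frac{f(y)}{\|x-y\|^{d-\sigma}}\,dy
$$
and control the near and far pieces separately, optimizing over $R$ at the end.

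The near part is handled by decomposing $\{\|y-x\|<R\}$ into dyadic annuli $\{R/2^{k+1}\le\|y-x\|<R/2^k\}$, $k\ge 0$, bounding the kernel on each annulus by $(R/2^{k+1})^{\sigma-d}$, recognizing the resulting average of $|f|$ over a ball of radius $R/2^k$ as $\lesssim Mf(x)$, and summing the resulting geometric series in $\sigma>0$ to obtain a bound of order $R^\sigma Mf(x)$. The far part is handled by H\"older's inequality (and the obvious $L^\infty$ variant when $p=1$); the integral $\int_{\|y-x\|\ge R}\|x-y\|^{-(d-\sigma)p'}\,dy$ converges precisely when $(d-\sigma)p'>d$, equivalently $p<d/\sigma$, and equals a constant multiple of $R^{d-(d-\sigma)p'}$, so the far part is bounded by $R^{\sigma-d/p}\|f\|_p$. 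Combining the two pieces gives
$$
|I^\sigma f(x)| \lesssim R^\sigma Mf(x) + R^{\sigma-d/p}\|f\|_p,
$$
and choosing $R$ so that the two terms are comparable (namely $R\sim(\|f\|_p/Mf(x))^{p/d}$) produces the Hedberg inequality above, the exponent identity $p\sigma/d + p/q = 1$ being exactly $\tfrac1q = \tfrac1p-\tfrac{\sigma}d$.

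For $p>1$ the strong type $(p,q)$ bound then follows by raising the Hedberg inequality to the $q$-th power, integrating, and invoking the $L^p$-boundedness of $M$:
$$
\|I^\sigma f\|_q^q \lesssim \|f\|_p^{q-p}\,\|Mf\|_p^p \lesssim \|f\|_p^q.
$$
For $p=1$ the same pointwise inequality shows that $|I^\sigma f(x)|>\lambda$ forces $Mf(x)> c\lambda^q/\|f\|_1^{q-1}$, and the desired weak type $(1,q)$ estimate follows from the weak type $(1,1)$ property of $M$. The main substantive input is thus the classical Hardy--Littlewood maximal theorem; the only technical subtlety in the argument proper is the convergence condition on the far piece, which is precisely what forces the hypothesis $p<d/\sigma$ and makes the theorem sharp in its present range.
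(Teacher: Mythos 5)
Your proof is correct. The paper does not actually prove Theorem \ref{thm:HLS} --- it is quoted as a classical result with references to Duoandikoetxea and Stein --- and your Hedberg-type argument (dyadic near/far splitting, domination of the near piece by $R^\sigma Mf(x)$, H\"older on the far piece with the convergence condition $(d-\sigma)p'>d$ equivalent to $p<d/\sigma$, and optimization in $R$) is precisely the standard proof found in those references; the exponent bookkeeping, the strong type conclusion via $\|Mf\|_p\lesssim\|f\|_p$ for $p>1$, and the weak type $(1,q)$ deduction from the weak type $(1,1)$ property of $M$ all check out.
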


The Hardy-Littlewood-Sobolev theorem was extended to a two-weight setting in \cite{SW}. 
\begin{theor}[E. M. Stein and G. Weiss] \label{thm:StWe}
Let $0<\sigma<d$, $1<p\le q<\infty$, $a<d/p'$, $b<d/q$, $a+b\ge0$ and 
$\frac1q=\frac1p-\frac{\sigma-a-b}d$. Then 
$$
\big\|I^\sigma f\big\|_{L^q(\|x\|^{-bq})}\lesssim \big\|f\big\|_{L^p(\|x\|^{ap})}, 
\qquad f\in L^p(\R, \|x\|^{ap}).
$$
\end{theor}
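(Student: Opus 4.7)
The plan is to combine the unweighted Hardy--Littlewood--Sobolev estimate (Theorem \ref{thm:HLS}) with one-dimensional Hardy-type inequalities by means of a dyadic annular decomposition. Set $A_j=\{x\in\R:2^j\le\|x\|<2^{j+1}\}$ for $j\in\mathbb{Z}$, write $f=\sum_k f_k$ with $f_k=f\mathbf{1}_{A_k}$, and for $x\in A_j$ decompose
\begin{equation*}
I^\sigma f(x)=\sum_{|k-j|\le 1}I^\sigma f_k(x)+\sum_{k\le j-2}I^\sigma f_k(x)+\sum_{k\ge j+2}I^\sigma f_k(x)=:N(x)+F^{\mathrm{in}}(x)+F^{\mathrm{out}}(x).
\end{equation*}

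For the two far pieces, the relations $\|x-y\|\approx\|x\|$ when $\|y\|\ll\|x\|$ and $\|x-y\|\approx\|y\|$ when $\|y\|\gg\|x\|$ produce the pointwise majorants
\begin{equation*}
F^{\mathrm{in}}(x)\lesssim\|x\|^{\sigma-d}\!\int_{\|y\|\le\|x\|/2}\!|f(y)|\,dy,\qquad F^{\mathrm{out}}(x)\lesssim\!\int_{\|y\|\ge 2\|x\|}\!\|y\|^{\sigma-d}|f(y)|\,dy.
\end{equation*}
I would pass to polar coordinates and reduce the required $L^p(\|x\|^{ap})\to L^q(\|x\|^{-bq})$ bounds for these two operators to one-variable weighted Hardy inequalities on $(0,\infty)$. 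The scaling relation $1/q=1/p-(\sigma-a-b)/d$ matches the homogeneity of both operators, while the strict inequalities $a<d/p'$ and $b<d/q$ are precisely the integrability thresholds of the resulting power weights that Muckenhoupt's criterion for Hardy inequalities demands.

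For the near piece $N$, both weights are essentially constant and comparable on $A_j\cup A_{j\pm 1}$, say $\|x\|^{-bq}\approx 2^{-bqj}$ and $\|y\|^{ap}\approx 2^{apj}$. Applying Theorem \ref{thm:HLS} at the auxiliary exponent $r$ defined by $1/r=1/q+\sigma/d$, and then H\"older's inequality on the bounded annulus $A_k$ to pass from $L^r$ to $L^p$ --- legitimate because $a+b\ge 0$ forces $r\le p$, and the factor $|A_k|^{1/r-1/p}\approx 2^{k(a+b)}$ combines with the weight factors into a bounded constant when $|k-j|\le 1$ --- yields the desired estimate on each scale with bounded overlap in $j$.

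The main obstacle will be the endpoint case $r=1$, which occurs precisely when $q=d/(d-\sigma)$: there Theorem \ref{thm:HLS} supplies only a weak-type bound for the near piece, so one must either slightly perturb $p$ and $q$ and Marcinkiewicz-interpolate (the open hypotheses $a<d/p'$, $b<d/q$ leave room to do so) or replace the strong-type HLS step by a rearrangement argument. A secondary technical point is verifying the one-dimensional Hardy inequalities with sharp endpoint conditions; this is classical, and the three constraints $a+b\ge 0$, $a<d/p'$, $b<d/q$ together with the scaling relation cover all cases.
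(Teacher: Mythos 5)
The paper does not prove this statement at all: Theorem \ref{thm:StWe} is quoted verbatim from the Stein--Weiss article \cite{SW}, whose original argument runs through an interpolation theorem with change of power weights rather than through any decomposition of the kind you propose. So there is no in-paper proof to compare against, and your proposal has to stand on its own. Its overall architecture (dyadic annuli, Hardy inequalities for the far interactions, a local fractional-integration bound for the diagonal) is a legitimate and well-known route to the Stein--Weiss inequality, and your treatment of the two far pieces is sound: the pointwise majorants are correct, the reduction to one-dimensional weighted Hardy inequalities (after H\"older on spheres) works, and the conditions $a<d/p'$ and $b<d/q$ are indeed exactly the Muckenhoupt thresholds once the scaling relation is imposed.

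The genuine gap is in the near piece. You set $1/r=1/q+\sigma/d$ and verify $r\le p$ from $a+b\ge 0$, but you never verify $r\ge 1$, and this can fail inside the admissible parameter range. Since $1/r=1/q+\sigma/d=1/p+(a+b)/d$, one has $r\ge1$ if and only if $a+b\le d/p'$; the hypotheses only give $a+b<d/p'+d/q$, so $r<1$ occurs whenever $a+b>d/p'$. A concrete instance: $d=1$, $\sigma=0.9$, $p=q=3/2$, $a=0.3$, $b=0.6$ satisfies every hypothesis of the theorem, yet $1/r=2/3+0.9>1$. In that regime Theorem \ref{thm:HLS} at exponent $r$ is meaningless, and the alternative of applying HLS from $L^p$ upward and then H\"older downward also fails there because $p>d/\sigma$. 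The repair is to abandon HLS for the diagonal blocks and instead use Young's inequality with the truncated kernel $\|z\|^{\sigma-d}\mathbf{1}_{\{\|z\|\lesssim 2^j\}}$, whose $L^s$ norm with $1/s=1+1/q-1/p$ is finite precisely when $a+b>0$ and equals a constant times $2^{j(a+b)}$, which is exactly the factor the weights demand; the residual case $a+b=0$ is pure HLS with $p>1$ and needs no H\"older step. With that substitution (and the $\ell^p\hookrightarrow\ell^q$ embedding to sum over $j$, which you implicitly use and which requires $p\le q$), the proof closes; as written, however, the near-piece step would fail for part of the stated range, so the argument is incomplete.
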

Note that the conditions $\frac1q=\frac1p-\frac{\sigma}d$ or 
$\frac1q=\frac1p-\frac{\sigma-a-b}d$ appearing in the above theorems are in fact necessary and
forced by a homogeneity type argument.

Numerous analogues of the Euclidean fractional integral operator were investigated in various settings,
including spaces of homogeneous type, orthogonal expansions, etc. For instance, in the seminal article of
Muckenhoupt and E. M. Stein \cite{MS} the case of ultraspherical expansions was treated. Gasper and Trebels
(and one of the authors of the present article) studied fractional integration for one dimensional Hermite
and Laguerre function expansions \cite{GST,GT}; 
the Laguerre case was also considered by Kanjin
and E. Sato \cite{KS}. Recently, Bongioanni and Torrea \cite{BT1} obtained $L^p - L^q$ estimates for
negative powers of the harmonic oscillator. In a more general context Bongioanni, Harboure and Salinas
\cite{BHS} investigated weighted
inequalities
for negative powers of Schr\"odinger operators with weights satisfying the reverse H\"older inequality.
Our present results generalize significantly those of \cite{GST,GT,KS}.

In this paper we focus on negative powers of ``Laplacians'' associated to multi-dimensional
Laguerre function expansions. 
For these operators we prove  two-weight $L^p-L^q$ estimates in the spirit of Theorem \ref{thm:StWe}.
Such estimates are of interest, for instance in the study of 
higher order Riesz transforms or Sobolev spaces related to Laguerre expansions.
In all the cases we discuss, spectra of self-adjoint extensions of the considered operators are discrete,
separated from zero, subsets of $(0,\infty)$. Hence negative powers of them are well defined in appropriate
$L^2$ spaces just by means of the spectral theorem.
The relevant extensions to weighted $L^p$ spaces of the negative powers are given by suitable
integral representations. The emerging integral operators are called the \emph{potential operators}
(sometimes also referred to as the \emph{fractional integral operators}).
Also, we take an opportunity to slightly enhance the result obtained by Bongioanni and Torrea for the
harmonic oscillator, by stating and proving a weighted counterpart (with power weights) to their result.

In the Laguerre case we consider two different systems of Laguerre functions, 
$\{\varphi_k^{\alpha}\}$ and $\{\ell_k^{\alpha}\}$.
The first one leads to so-called Laguerre function expansions of Hermite type.
It occurs that to some extent in this setting the problem of $L^p-L^q$ estimates for the potential operator
almost reduces to the Hermite case. This is due to the fact that the heat
kernels corresponding to different multi-indices of type $\alpha\in[-1/2,\infty)^d$
possess certain monotonicity
property with respect to $\alpha$. Thus it suffices to consider only the specific multi-index
$\alpha_o=(-1/2,\ldots,-1/2)$, which corresponds to Hermite function expansions. 

The second system of Laguerre functions
is related to so-called Laguerre expansions of convolution type. 
In this case our approach is quite different and in fact a more involved analysis is necessary.
We first deal with half-integer multi-indices $\alpha$ and transfer the desired results from
the Hermite setting. Then, to cover the continuous range of $\alpha\in[-1/2,\infty)^d$, 
we derive certain interpolation
argument, which we call the \emph{convexity principle}.
This method is of independent interest and can be applied in other situations.

The organization of the paper is the following.
In Section \ref{sec:herm} we gather known facts concerning
the potential kernel and the potential operator related to
the harmonic oscillator. Then we state and prove a two-weight 
$L^p-L^q$ estimate for the Hermite potential operator  in the spirit of Theorem \ref{thm:StWe}.
In Section \ref{sec:lag_herm} we discuss potential operators associated to Laguerre function expansions
of Hermite type. Section \ref{sec:lag} is devoted to Laguerre function expansions of convolution type.
Section \ref{sec:aver} establishes the convexity principle which allows to give proofs of the main
results of Section \ref{sec:lag}. In Section \ref{sec:dunkl} we take an opportunity to study
negative powers of the Dunkl harmonic oscillator in the context of a finite reflection group acting on
$\R$ and isomorphic to $\mathbb{Z}_2^d$. The results of this section contain as special cases 
those of Sections \ref{sec:herm} and \ref{sec:lag}, 
and are strongly connected with the estimates of Section \ref{sec:lag}.
Finally, in Section \ref{sec:final} we gather various additional observations and remarks.
Comments explaining how our present results generalize those of \cite{GST,GT,KS} are located
throughout the paper.

We use a standard notation with essentially all symbols referring to
either $\R$ or $\R_{+}=(0,\infty)^d$, $d\ge1$, depending on the context. 
Thus $\Delta$ denotes either the Laplacian in $\R$, or its restriction to $\R_{+}$, and $\|\cdot\|$
stands for the Euclidean norm. By $\langle f,g\rangle$ we denote
$\int_{\R}f(x)\overline{g(x)}\,dx$ (or the same, but with the integration restricted to $\R_{+}$) 
whenever the integral makes sense. For a nonnegative weight function $w$ on either $\R$ or $\R_+$, 
by $L^p(\R,w)$ or $L^p(\R_+,w)$, $1\le p\le \infty$, or simply by $L^p(w)$, we denote the 
usual Lebesgue spaces related to the measure $dw(x)=w(x)dx$ 
(we will often abuse slightly the notation and use the same symbol $w$ to 
denote the measure induced by a density $w$). If $w\equiv1$ we simply write $L^p(\R)$ or $L^p(\R_+)$.
Beginning from Section \ref{sec:lag}, Lebesgue measure $dx$ on  $\R_{+}$ is replaced by $\mu_\alpha(dx)$,
where $\alpha\in(-1,\infty)^d$ is a multi-index, hence some symbols previously related to $dx$
are then related to $\mu_\alpha(dx)$.
Similar situation occurs in Section \ref{sec:dunkl} where $dx$ on $\R$ is replaced by $w_{\alpha}(dx)$.

If $k\in\N$, $\mathbb N=\{0,1,\ldots\}$, then $|k|=k_1+\ldots+k_d$ is the length of $k$. 
The notation $X\lesssim Y$ will be used to indicate that $X\leq CY$ with a positive constant $C$
independent of significant quantities. We shall write $X \simeq Y$ when simultaneously 
$X \lesssim Y$ and $Y \lesssim X$.
Given $1 \le p \le \infty$, $p'$ denotes its adjoint, $1\slash p + 1\slash p' =1$.

\section{Negative powers of the harmonic oscillator} \label{sec:herm}
The multi-dimensional Hermite functions $h_k(x)$, $k\in \N$, are given by tensor products
$$
  h_k(x)=\prod_{i=1}^dh_{k_i}(x_i),\qquad x= (x_1, \ldots ,x_d)\in \R,
$$
where $h_{k_i}(x_i)=(\pi^{1/2}2^{k_i}{k_i}!)^{-1/2}H_{k_i}(x_i) e^{-x_i^2/2}$,
and $H_n$ denote the Hermite polynomials of degree $n\in \mathbb N$, cf{.} \cite[p.\,60]{Leb}. The system
$\{h_k : k \in \N\}$ is a complete orthonormal system in $L^2(\R)$. It consists of eigenfunctions of
the $d$-dimensional harmonic oscillator 
$$
\mathcal{H}=-\Delta+\|x\|^2,
$$
$\mathcal{H} h_k = \lambda_kh_k$, $\lambda_k=2|k|+d$.  
We shall denote by the same symbol the natural self-adjoint 
extension of $\mathcal{H}$, whose spectral resolution is given by the $h_k$ and $\lambda_k$,
see \cite{ST1}. The integral kernel of the Hermite semigroup
$\{e^{-t\mathcal{H}}: t>0\}$ is known explicitly (see \cite{ST2} for this symmetric form of the kernel),
\begin{align*}
G_t(x,y)&=\sum_{n=0}^\infty e^{-(2n+d)t}\sum_{|k|=n}h_k(x)h_k(y)\\
&=\big(2\pi\sinh(2t)\big)^{-d/2}\exp\bigg(-\frac14\Big[\tanh( t)\|x+y\|^2+\coth(t)\|x-y\|^2 \Big]\bigg).
\end{align*}

Given $\sigma>0$, consider the negative power $\mathcal{H}^{-\sigma}$. 
In view of the spectral theorem, it is expressed on $L^2(\R)$ by the spectral series,
\begin{equation}\label{neg}
\mathcal{H}^{-\sigma}f= \sum_{k \in \mathbb{N}^d} (2|k|+d)^{-\sigma}	\langle f,h_k\rangle \,h_k.
\end{equation}
Notice that $\mathcal{H}^{-\sigma}$ is a contraction on $L^2(\R)$ for any $\sigma>0$.

Motivated by the formal identity
$$
\mathcal{H}^{-\sigma}=\frac1{\Gamma(\sigma)}\int_0^\infty e^{-t\mathcal{H}} t^{\sigma-1}\,dt,
$$
it is natural to introduce the potential kernel 
\begin{equation}\label{ker}
\mathcal{K}^{\sigma}(x,y)= \frac1{\Gamma(\sigma)}\int_0^\infty G_t(x,y) t^{\sigma-1}\,dt.
\end{equation}
It follows from the decay of $G_t(x,y)$, as $t\to\infty$ and $t\to0^+$, that for $\sigma >d/2$ the integral
in \eqref{ker} is convergent for every $x,y\in\R$, while for $0<\sigma\le d/2$ the integral converges
provided that $x\neq y$.

Define the auxiliary convolution kernel $K^{\sigma}(x)$, $ x\in\R\backslash \{0\}$, by
$$
K^{\sigma}(  x)=\exp(-\|x\|^2/8), \qquad \|x\|\ge1,
$$
and, for  $\|x\|<1$,
\begin{equation*}
K^{\sigma}(x)=
\begin{cases}
1, & \quad \sigma>d/2, \\
\log( 4\slash \|x\|), & \quad \sigma=d/2, \\
\|x\|^{2\sigma-d}, & \quad \sigma<d/2.
\end{cases}
\end{equation*}
It is immediately seen that $K^{\sigma}\in L^1(\R)$ for all $\sigma>0$. 
Moreover, if $\sigma > d/2$, then $K^{\sigma}\in L^r(\R)$ for each $1\le r\le \infty$, 
if $\sigma=d\slash 2$, then $K^{\sigma}\in L^r(\R)$ for $1\le r< \infty$,
while for $\sigma< d/2$ we have $K^{\sigma}\in L^r(\R)$ if and only if $r<d/(d-2\sigma)$.

It was proved in \cite{BT1} that $\mathcal{K}^{\sigma}(x,y)$ is controlled by $K^{\sigma}(x-y)$.
To make this section self-contained we include below a short proof of this result.
An estimate of the integral
\begin{equation*}
 E_a(T)=\int_0^1\zeta^{-a}\exp(-T\zeta^{-1})\,d\zeta, \qquad T>0,
\end{equation*}
is needed. The statement below is a refinement of
\cite[Lemma 1.1]{ST1}, see also \cite[Lemma 2.3]{NS2}.
\begin{lemma}
\label{lem:le}
Let $a\in\mathbb R$ be fixed. Then
\begin{equation} \label{estea}
 E_a(T)\lesssim \exp(-T/2), \qquad T \ge 1,
\end{equation}
and for $0<T<1$
\begin{equation*}
 E_a(T)\simeq  
\begin{cases}
1, & \quad a<1, \\
\log (2\slash T), & \quad a=1, \\
T^{-a+1}, & \quad a>1.
\end{cases}
\end{equation*}
\end{lemma}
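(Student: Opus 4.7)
The plan is to reduce the problem to understanding the incomplete Gamma function through the substitution $u=T/\zeta$, which transforms the integral into the cleaner form
\begin{equation*}
E_a(T) = T^{1-a}\int_T^\infty u^{a-2} e^{-u}\,du, \qquad T>0.
\end{equation*}
This single identity will serve as the basis for both parts of the lemma.

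For the estimate \eqref{estea} at large $T$, I would avoid the substitution and work directly with the defining integral, exploiting the elementary splitting
\begin{equation*}
e^{-T/\zeta} = e^{-T/(2\zeta)}\cdot e^{-T/(2\zeta)} \le e^{-T/2}\cdot e^{-T/(2\zeta)},
\end{equation*}
which holds for every $\zeta\in(0,1]$ since $1/\zeta\ge 1$. Pulling the first factor out of the integral gives
\begin{equation*}
E_a(T) \le e^{-T/2}\int_0^1 \zeta^{-a} e^{-T/(2\zeta)}\,d\zeta,
\end{equation*}
and for $T\ge 1$ the remaining integrand is bounded above by $\zeta^{-a} e^{-1/(2\zeta)}$, which is integrable on $(0,1]$ for any real $a$ because the exponential kills the $\zeta\to 0^+$ singularity regardless of $a$. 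This yields \eqref{estea} uniformly in $a$.

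For the small-$T$ asymptotics I would use the substituted form and split the outer integral as $\int_T^\infty = \int_T^1 + \int_1^\infty$. The tail $\int_1^\infty u^{a-2} e^{-u}\,du$ is a finite constant depending only on $a$, while on $[T,1]$ one has $e^{-1}\le e^{-u}\le 1$, so the behaviour is governed by $\int_T^1 u^{a-2}\,du$. A direct computation gives three regimes: for $a<1$ the integral behaves like $T^{a-1}/(1-a)$, for $a=1$ it behaves like $\log(1/T)$, and for $a>1$ it tends to a finite limit (while $\int_T^\infty$ tends to $\Gamma(a-1)$). Multiplying by the prefactor $T^{1-a}$ and absorbing $\log(1/T)\simeq \log(2/T)$ for $T<1$ produces exactly the three cases in the statement, with matching upper and lower bounds inherited from the two-sided estimate on $e^{-u}$.

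The only mild subtlety I anticipate is the uniformity in $a$ for the first part: the naive route of using the substituted form and bounding $\int_T^\infty u^{a-2} e^{-u/2}\,du$ leaves behind an unwanted factor $T^{1-a}$ that is not $\lesssim 1$ when $a<1$. The splitting trick above avoids this entirely and simultaneously covers all real $a$. Apart from that, the argument is routine calculus based on the substitution and case analysis according to the integrability of $u^{a-2}$ at the origin.
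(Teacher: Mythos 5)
Your proof is correct and follows essentially the same route as the paper: the substitution $u=T/\zeta$ and the splitting of $\int_T^\infty$ at $u=1$ for the small-$T$ asymptotics are exactly the paper's argument. For $T\ge 1$ the paper works from the substituted form and absorbs the leftover factor $T^{1-a}$ into the surviving exponential decay (so the ``unwanted factor'' you worry about is harmless: $T^{1-a}e^{-3T/4}\lesssim e^{-T/2}$ for $T\ge 1$ regardless of the sign of $1-a$), while your splitting $e^{-T/\zeta}\le e^{-T/2}e^{-T/(2\zeta)}$ is an equally valid one-line alternative.
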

\begin{proof}
A change of the variable of the integration yields
\begin{equation}\label{int} 
 E_a(T)=T^{-a+1}\int_T^\infty y^{a-2}\exp(-y)\,dy.
\end{equation}
Now the estimate for $T\ge1$ follows since
\begin{equation*}
T^{-a+1}\int_T^\infty y^{a-2}\exp(-y)\,dy\lesssim T^{-a+1}\exp(-3T/4)\lesssim \exp(-T/2).
\end{equation*}
Notice that \eqref{estea} can be improved; in fact we have 
$E_a(T)\lesssim \exp(-T\slash(1+\varepsilon))$ for any fixed $\varepsilon > 0$.

The estimates for $0<T<1$ are verified by splitting the integration in \eqref{int} onto the
intervals $(T,1)$ and $(1,\infty)$. 
Then in the first resulting integral the exponential factor can be neglected, and the second integral
is just a positive constant. This easily implies the desired bounds from above and below.
\end{proof}
\begin{propo}[{\cite[Proposition 2]{BT1}}] \label{pro:comp}
For each $\sigma>0$,
$$
0<\mathcal{K}^{\sigma}(x,y)\lesssim K^{\sigma}(x-y).
$$
\end{propo}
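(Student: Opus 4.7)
The positivity $\mathcal{K}^{\sigma}(x,y)>0$ is immediate, since the integrand in \eqref{ker} is strictly positive. The substance of the argument is the pointwise bound, which I would obtain by splitting the time integral as $\int_0^1 + \int_1^\infty$ and estimating each part by elementary bounds for $\sinh$, $\tanh$, $\coth$.

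For the large-time piece $t\geq 1$, I would use $\sinh(2t)\gtrsim e^{2t}$ and $\coth(t)\geq 1$, together with discarding the nonnegative $\tanh$-term, to get
$$G_t(x,y)\lesssim e^{-dt}\exp\bigl(-\|x-y\|^2/4\bigr).$$
Since $\int_1^\infty t^{\sigma-1}e^{-dt}\,dt<\infty$, the contribution is bounded by a constant times $\exp(-\|x-y\|^2/4)$. This already fits the required profile: for $\|x-y\|\geq 1$ it is dominated by $\exp(-\|x-y\|^2/8)$, and for $\|x-y\|<1$ it is bounded by a constant, which in turn is $\lesssim K^\sigma(x-y)$ in all three cases of the definition.

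For the small-time piece $0<t<1$, I would use $\sinh(2t)\simeq t$ and $\coth(t)\geq 1/t$, again discarding the $\tanh$-term, to obtain
$$G_t(x,y)\lesssim t^{-d/2}\exp\bigl(-\|x-y\|^2/(4t)\bigr).$$
Substituting this bound into \eqref{ker}, the resulting integral is exactly $E_a(T)$ in the notation of Lemma \ref{lem:le}, with $a=1+d/2-\sigma$ and $T=\|x-y\|^2/4$. Applying the lemma then yields the three regimes: for $\|x-y\|\geq 1$ one is in the range $T\gtrsim 1$, so $E_a(T)\lesssim \exp(-T/2)=\exp(-\|x-y\|^2/8)$, independently of $\sigma$; for $\|x-y\|<1$ one is in the range $T<1$, and the three dichotomies $a<1$, $a=1$, $a>1$ correspond exactly to $\sigma>d/2$, $\sigma=d/2$, $\sigma<d/2$, producing the values $1$, $\log(2/T)\simeq \log(4/\|x-y\|)$, and $T^{-a+1}\simeq \|x-y\|^{2\sigma-d}$ respectively, matching the definition of $K^\sigma$.

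There is no real obstacle beyond bookkeeping; the only slightly delicate point is the transition between the two $x-y$ regimes at $\|x-y\|\simeq 1$, where one must check that the logarithmic and polynomial profiles from the lemma are comparable with the pure Gaussian tail coming from the large-time piece. This is handled by the observation that both $K^\sigma(x-y)$ and $\exp(-\|x-y\|^2/8)$ are bounded above and below by positive constants on any fixed compact subset of $\R\setminus\{0\}$ avoiding the origin, so the constants absorb the matching near $\|x-y\|=1$.
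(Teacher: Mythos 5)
Your proof is correct and follows essentially the same route as the paper: the same splitting of the time integral at $t=1$, the same elementary bounds on the hyperbolic factors yielding $\mathcal{J}^\sigma_\infty\lesssim\exp(-\|x-y\|^2/4)$ and $\mathcal{J}^\sigma_0\lesssim\int_0^1\exp(-\|x-y\|^2/(4t))\,t^{\sigma-d/2-1}\,dt$, and the same application of Lemma \ref{lem:le}. The only difference is that you spell out the matching of regimes near $\|x-y\|\simeq 1$ explicitly, which the paper leaves implicit.
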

\begin{proof}
The lower estimate is a consequence of the strict positivity of the kernel $G_t(x,y)$.
To show the upper estimate we write
\begin{equation*}
\Gamma(\sigma)\mathcal{K}^{\sigma}(x,y)=\int_0^1 G_t(x,y) t^{\sigma-1}\,dt+\int_1^\infty G_t(x,y)
 t^{\sigma-1}\,dt\equiv \mathcal{J}^{\sigma}_0(x,y)+\mathcal{J}^{\sigma}_\infty(x,y).
\end{equation*}
Then
\begin{equation*}
\mathcal{J}^{\sigma}_\infty(x,y)\lesssim \int_1^\infty e^{-dt}\exp\bigg(-\frac14{\|x-y\|^2}\bigg)
 t^{\sigma-1}\,dt\lesssim 
\exp\bigg(-\frac{\|x-y\|^2}{4}\bigg)
\end{equation*}
and
\begin{equation*}
\mathcal{J}^{\sigma}_0(x,y)\lesssim \int_0^1\exp\bigg(-\frac14\frac{\|x-y\|^2}{t}\bigg) 
	t^{\sigma-d/2-1}\,dt.
\end{equation*}
To treat the last integral we use Lemma \ref{lem:le}
and then combine the obtained bounds of $\mathcal{J}^{\sigma}_0(x,y)$ and 
$\mathcal{J}^{\sigma}_\infty(x,y)$. The required estimate of $\mathcal{K}^{\sigma}(x,y)$ follows.
\end{proof}
Consider the potential operator  $\mathcal{I}^{\sigma}$,
\begin{equation*}
\mathcal{I}^\sigma f(x)=\int_{\R}\mathcal{K}^{\sigma}(x,y)f(y)\,dy,
\end{equation*}
defined on the natural domain $\domain \mathcal{I}^\sigma$ consisting of those functions $f$ 
for which the above integral is convergent $x$-a.e. 
(heuristically, $\mathcal{I}^{\sigma}f = \frac{1}{\Gamma(\sigma)} \int_0^{\infty}
e^{-t\mathcal{H}}f\, t^{\sigma-1}dt$).
By Proposition \ref{pro:comp} and the fact that
$K^\sigma\in L^1(\R)$ we see that  $L^p(\R)\subset \domain \mathcal{I}^\sigma$, $1\le p\le\infty$.

The following result has recently been proved by Bongioanni and Torrea. 
Here we include in addition a discussion of the case $\sigma\ge d\slash 2$.
\begin{theor}[{\cite[Theorem 8]{BT1}}] \label{thm:Her}
Let $\sigma>0$ and $1\le p\le\infty$, $1\le q\le\infty$. If $\sigma\ge d/2$, then
\begin{equation}\label{bound}
\|\mathcal{I}^\sigma f\|_q\lesssim \|f\|_p, \qquad f\in L^p(\R),
\end{equation}
excluding the cases when $\sigma=d\slash 2$ and either
$p=\infty$, $q=1$ or $p=1$, $q=\infty$.
If $0<\sigma< d/2$, then \eqref{bound} holds if 
$\frac1p-\frac{2\sigma}d\le \frac 1q<\frac 1p +\frac{2\sigma}d$, 
with exclusion of the cases: $p=1$ and $q = \frac {d}{d-2\sigma}$ (in which $\mathcal{I}^{\sigma}$
satisfies the weak type $(1,q)$ estimate), and $p=\frac{d}{2\sigma}$ and $q=\infty$. 
Moreover, in each of the cases of strong type $(p,q)$, $p < \infty$, for any $ k\in \N$ we have
\begin{equation}\label{co}
\langle\mathcal{I}^\sigma f, h_k\rangle=\lambda_k^{-\sigma}\langle f, h_k\rangle, 
\qquad f\in L^p(\R).
\end{equation}
\end{theor}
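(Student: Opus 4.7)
The proof rests on the pointwise domination $\mathcal{K}^\sigma(x,y)\lesssim K^\sigma(x-y)$ from Proposition \ref{pro:comp} together with a separate estimate exploiting that $\mathcal{K}^\sigma$ is \emph{not} translation-invariant. My plan is first to cover the classical range $\frac1q\in[\frac1p-\frac{2\sigma}{d},\frac1p]$ (the Hardy--Littlewood--Sobolev direction, $q\ge p$) via standard tools, and then to extend to the complementary range $\frac1q\in(\frac1p,\frac1p+\frac{2\sigma}{d})$ (i.e.\ $q<p$, the direction that distinguishes $\mathcal{I}^\sigma$ from the Euclidean Riesz potential) via the specific decay of $\mathcal{I}^\sigma 1$. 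Interpolation and the spectral identity then finish the proof.

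For $\frac1q\in(\frac1p-\frac{2\sigma}{d},\frac1p]$ (strict on the left) I would apply Young's convolution inequality to $K^\sigma*|f|$: this suffices because $K^\sigma\in L^r(\R)$ for $r\in[1,d/(d-2\sigma))$ when $\sigma<d/2$, for $r\in[1,\infty)$ when $\sigma=d/2$, and for $r\in[1,\infty]$ when $\sigma>d/2$, and the Young relation $\frac1q=\frac1p+\frac1r-1$ covers the interval. At the HLS endpoint $\frac1q=\frac1p-\frac{2\sigma}{d}$ (only occurring if $\sigma<d/2$), the singular part $K^\sigma(z)=\|z\|^{2\sigma-d}$ for $\|z\|<1$ is pointwise dominated by the Riesz kernel of order $2\sigma$, so Theorem \ref{thm:HLS} supplies strong type for $p>1$ and weak type for $p=1$, while the harmless Gaussian tail of $K^\sigma$ is absorbed by Young.

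For the range $q\le p$, i.e.\ $\frac1q\in[\frac1p,\frac1p+\frac{2\sigma}{d})$, I would integrate out $y$ in the Mehler formula for $G_t$ to obtain the closed form
\[
\mathcal{I}^\sigma 1(x)=\frac1{\Gamma(\sigma)}\int_0^\infty\cosh(2t)^{-d/2}\exp\!\Bigl(-\tfrac12\tanh(2t)\|x\|^2\Bigr)t^{\sigma-1}\,dt.
\]
A split of the $t$-integral near $0$ versus away from $0$ shows that $\mathcal{I}^\sigma 1$ is bounded on $\R$ and decays like $\|x\|^{-2\sigma}$ as $\|x\|\to\infty$, so $\mathcal{I}^\sigma 1\in L^r(\R)$ iff $r>d/(2\sigma)$, with $r=1$ included exactly when $\sigma>d/2$ (logarithmic failure when $\sigma=d/2$). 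Via $|\mathcal{I}^\sigma f(x)|\le\|f\|_\infty\mathcal{I}^\sigma 1(x)$ this yields $L^\infty\to L^r$, and the symmetry $\mathcal{K}^\sigma(x,y)=\mathcal{K}^\sigma(y,x)$, which makes $\mathcal{I}^\sigma$ self-dual, gives the corresponding $L^{r'}\to L^1$. Schur's test applied with $\sup_x\mathcal{I}^\sigma 1(x)<\infty$ (valid for every $\sigma>0$) gives the diagonal $L^p\to L^p$ bounds for all $p\in[1,\infty]$, and Riesz--Thorin interpolation among these three families of endpoints fills out exactly the strong-type region stated in the theorem. The corner exclusions at $\sigma=d/2$ are verified by direct counterexamples: $\mathcal{I}^{d/2}1\notin L^1$ and $\mathcal{K}^{d/2}\notin L^\infty$ (the on-diagonal integral diverges like $\int_0 t^{-1}dt$).

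For \eqref{co}, if $f\in L^2(\R)\cap L^p(\R)$ then Fubini applied to $\int_0^\infty\!\int G_t(x,y)f(y)\,dy\,t^{\sigma-1}dt$ together with $e^{-t\mathcal{H}}h_k=e^{-\lambda_k t}h_k$ identifies $\mathcal{I}^\sigma f$ with the spectral $\mathcal{H}^{-\sigma}f$ from \eqref{neg}, giving $\langle\mathcal{I}^\sigma f,h_k\rangle=\lambda_k^{-\sigma}\langle f,h_k\rangle$. Since $h_k\in\mathcal{S}(\R)\subset L^{q'}(\R)$, both sides are continuous on $L^p$ (using the already established strong-type bound), so density of $L^2\cap L^p$ in $L^p$ for $p<\infty$ yields the identity for every $f\in L^p$. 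The main obstacle is the $q<p$ range: the pointwise bound of Proposition \ref{pro:comp} alone, being translation-invariant, cannot by itself produce any $L^p\to L^q$ estimate with $q<p$, so one has to exploit the genuine two-variable decay of $\mathcal{K}^\sigma$, which is precisely what the explicit computation of $\mathcal{I}^\sigma 1$ captures.
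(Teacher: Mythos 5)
Your proof is correct, and its overall architecture --- Young's inequality below the diagonal, domination by the Riesz kernel at the Hardy--Littlewood--Sobolev endpoint, off-diagonal endpoint bounds plus interpolation above the diagonal, and a density argument for \eqref{co} --- is the same as the paper's. The difference is one of self-containedness rather than of strategy. For the range $\frac1p<\frac1q<\frac1p+\frac{2\sigma}d$ (and for $q<p$ when $\sigma\ge d/2$) the paper simply refers to the proofs of parts (iii) and (iv) of \cite[Theorem 8]{BT1} and records that the key input is the bound $\|x\|^{2\sigma}\int_{\R}\mathcal{K}^{\sigma}(x,y)\,dy\le C$; you instead derive this input explicitly by integrating out $y$ in the Mehler formula (your closed form for $\mathcal{I}^{\sigma}1$ is correct: completing the square gives $\int_{\R}G_t(x,y)\,dy=\cosh(2t)^{-d/2}\exp(-\frac12\tanh(2t)\|x\|^2)$, whence $\mathcal{I}^{\sigma}1(x)\simeq\min(1,\|x\|^{-2\sigma})$), and you obtain the dual family $L^{r'}\to L^1$ from the symmetry of $\mathcal{K}^{\sigma}$ rather than by a separate computation. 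What your version buys is a transparent, self-contained treatment of exactly the part of the argument that the paper outsources; what it costs is only the small amount of bookkeeping needed to check that the segments joining $(0,\frac1q-\frac1p)$ to $(1-(\frac1q-\frac1p),1)$ in the $(1/p,1/q)$-square sweep out the whole half-open band $0\le\frac1q-\frac1p<\frac{2\sigma}d$, which is how your Riesz--Thorin step fills the region above the diagonal. Your treatment of the HLS endpoint (splitting $K^{\sigma}$ into its singular part, dominated by the Riesz kernel, and its Gaussian tail, handled by Young) is a harmless variant of the paper's pointwise bound $G_t(x,y)\le W_t(x-y)$, and your proof of \eqref{co} by Fubini on $L^2\cap L^p$ plus density is equivalent to the paper's argument via the span of the Hermite functions.
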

\begin{proof}
Consider first the case $\sigma\ge d\slash 2$. If  $1\le p\le q\le\infty$,
since $0<\mathcal{K}^{\sigma}(x,y)\lesssim K^\sigma(x-y)$, the proof of \eqref{bound}
reduces to checking a similar estimate with $\mathcal{I}^\sigma$ replaced by the convolution 
operator $T^\sigma\colon f\mapsto K^\sigma \ast f$.
Recall that classical Young's inequality has the form
$$
\|g\ast f\|_q\le \|g\|_r\|f\|_p, \qquad \frac 1p+\frac1r=1+\frac1q, \quad 1\le p,q,r\le\infty
$$
(in particular it follows that if $g\in L^r$ and $f\in L^p$, then
$g \ast f(x)$ is well defined $x$-a.e.). Taking $g=K^\sigma$ above shows that 
$T^\sigma\colon L^p(\R)\to L^q(\R)$ boundedly provided $\sigma > d/2$ 
and $1\le p\le q\le\infty$; for $\sigma=d\slash 2$ the case $p=1$, $q=\infty$, is excluded. 
If $q<p$, then we argue as in the proof of \cite[Theorem 8,\,(iii)]{BT1} to get 
$\|\mathcal{I}^\sigma f\|_q\lesssim \|f\|_\infty$ for $1\le q<\infty$ with exclusion of $q=1$ 
when $\sigma=d\slash 2$. Similarly, we proceed as in the proof of \cite[Theorem 8,\,(iv)]{BT1} 
to get $\|\mathcal{I}^\sigma f\|_1\lesssim \|f\|_p$ for $p<\infty$. 
Then \eqref{bound} follows for $1<q<p<\infty$ by interpolation.

If $0<\sigma< d/2$ and $\frac{1}{q} > \frac{1}{p}-\frac{2\sigma}{d}$, 
then using Young's inequality is limited to $\frac 1q\le \frac 1p$.
In the endpoint case when $\frac{1}{q} = \frac{1}{p}-\frac{2\sigma}{d}$
the desired conclusion follows from Theorem \ref{thm:HLS}. This is because (see \cite[(2.9)]{ST1}) 
$$
G_t(x,y)\le W_t(x-y),
$$
where $W_t$ denotes the Gauss-Weierstrass kernel in $\R$, which implies
$\mathcal{I}^\sigma f\lesssim I^{2\sigma} f$ for any nonnegative $f$.
The case $\frac1p< \frac 1q<\frac 1p +\frac{2\sigma}d$ is more delicate and requires further 
arguments based on the estimate
$$
\|x\|^{2\sigma}\int_{\R}\mathcal{K}^{\sigma}(x,y)\,dy\le C,\qquad x\in\R,
$$
and an interpolation argument; we refer to \cite{BT1} for details.

To verify \eqref{co} observe that for each fixed $k\in \N$ the mapping 
$f\mapsto \langle\mathcal{I}^\sigma f, h_k\rangle$ is a bounded linear functional on 
$L^p(\R)$. This is because
$$
|\langle\mathcal{I}^\sigma f, h_k\rangle|\le \|\mathcal{I}^\sigma f\|_q\|h_k\|_{q'}
\lesssim \|h_k\|_{q'}\|f\|_p.
$$
Moreover, in the proof of Corollary \ref{negative} below it is checked that this functional agrees, on the 
dense in $L^p(\R)$ linear span of Hermite functions, with the linear functional
$f\mapsto \lambda_k^{-\sigma}\langle\ f, h_k\rangle$, which is also bounded on  
$L^p(\R)$. Hence both functionals coincide and \eqref{co} is justified.
\end{proof}
It is worth mentioning that in the case $0<\sigma<d\slash 2$ of the above theorem,
in some occurrences the constraint between $p$ and $q$  gives optimal ranges of $p$ and $q$ 
for \eqref{bound} to hold. This happens when $p=1$ or $p=\infty$ (then $1\le q<\frac d{d-2\sigma}$ 
or $\frac d{2\sigma}<q\le\infty$ are optimal, respectively), and $q=1$ or $q=\infty$ (then 
$1\le p<\frac d{d-2\sigma}$ or $\frac d{2\sigma}<p\le\infty$ are  sharp, respectively).
The corresponding proofs can be found in \cite{BT1}.
\begin{corollary} \label{negative}
Let $\sigma>0$ and $(p,q)$, $1\le p<\infty$, $1\le q \le\infty$, be such a pair that \eqref{bound} holds. 
Then $\mathcal{H}^{-\sigma}$ extends to a bounded operator from 
$L^p(\R)$ to $L^q(\R)$. 
Moreover, denoting this extension by $\mathcal{H}^{-\sigma}_{pq}$, 
in each of the cases, for any $k\in\N$ we have
\begin{equation} \label{coeff}
\langle\mathcal{H}^{-\sigma}_{pq} f, h_k\rangle=\lambda_k^{-\sigma}\langle f, h_k\rangle, 
\qquad f\in L^p(\R).
\end{equation}
\end{corollary}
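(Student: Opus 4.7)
The plan is to establish the coefficient identity \eqref{coeff} first on a dense subspace where everything can be computed explicitly, and then extend using the boundedness of $\mathcal{I}^\sigma$ provided by Theorem \ref{thm:Her}. Throughout, I will identify the extension $\mathcal{H}^{-\sigma}_{pq}$ with the integral operator $\mathcal{I}^\sigma$ acting on $L^p(\R)$ (which is well-defined there because $\mathcal{K}^{\sigma}(x,y)\lesssim K^{\sigma}(x-y)\in L^1(\R)$).

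First I would compute $\mathcal{I}^\sigma h_k$ explicitly for each Hermite function $h_k$. Since $h_k\in\mathcal S(\R)$ is bounded and $G_t(x,y)\le W_t(x-y)$ integrates to at most $1$ in $y$, we get $\int_{\R}G_t(x,y)|h_k(y)|\,dy\le \|h_k\|_\infty$; combining with Proposition \ref{pro:comp} and $K^\sigma\in L^1$, the iterated integral
\[
\int_{\R}\int_0^\infty G_t(x,y)|h_k(y)|t^{\sigma-1}\,dt\,dy=\Gamma(\sigma)\int_{\R}\mathcal{K}^\sigma(x,y)|h_k(y)|\,dy
\]
is finite. Fubini's theorem then justifies
\[
\mathcal{I}^\sigma h_k(x)=\frac{1}{\Gamma(\sigma)}\int_0^\infty\Big(\int_{\R}G_t(x,y)h_k(y)\,dy\Big)t^{\sigma-1}\,dt
=\frac{1}{\Gamma(\sigma)}\int_0^\infty e^{-\lambda_k t}h_k(x)\,t^{\sigma-1}\,dt=\lambda_k^{-\sigma}h_k(x),
\]
where I used that $h_k$ is an eigenfunction of $e^{-t\mathcal{H}}$ with eigenvalue $e^{-\lambda_k t}$ (an immediate consequence of the spectral expansion of $G_t$).

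Second, by linearity, for any finite linear combination $f=\sum_{|k|\le N}c_k h_k$ we have $\mathcal{I}^\sigma f=\sum_{|k|\le N}c_k\lambda_k^{-\sigma}h_k$, which coincides with the spectrally defined $\mathcal{H}^{-\sigma}f$ from \eqref{neg}. Since the linear span of $\{h_k:k\in\N\}$ is dense in $L^p(\R)$ for $1\le p<\infty$ (it is dense in the Schwartz class, hence in $L^p$), and $\mathcal{I}^\sigma$ is bounded from $L^p(\R)$ to $L^q(\R)$ by Theorem \ref{thm:Her}, the operator $\mathcal{I}^\sigma$ is the unique bounded extension of $\mathcal{H}^{-\sigma}|_{\mathrm{span}\{h_k\}}$; setting $\mathcal{H}^{-\sigma}_{pq}:=\mathcal{I}^\sigma$ establishes the first claim.

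Finally, for \eqref{coeff}, I would fix $k$ and note that $f\mapsto\langle\mathcal{H}^{-\sigma}_{pq}f,h_k\rangle$ is a bounded linear functional on $L^p(\R)$ since
\[
|\langle\mathcal{H}^{-\sigma}_{pq}f,h_k\rangle|\le \|\mathcal{H}^{-\sigma}_{pq}f\|_q\|h_k\|_{q'}\lesssim \|h_k\|_{q'}\|f\|_p,
\]
the factor $\|h_k\|_{q'}$ being finite because $h_k\in\mathcal S(\R)$ for every $1\le q'\le\infty$. On the other hand $f\mapsto\lambda_k^{-\sigma}\langle f,h_k\rangle$ is also bounded on $L^p(\R)$ (by H\"older's inequality, as $h_k\in L^{p'}$). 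By the first two steps the two functionals agree on the dense subspace spanned by the Hermite functions, so they agree on all of $L^p(\R)$. The main (minor) obstacle is the Fubini justification, which is handled cleanly by the Gauss--Weierstrass domination $G_t\le W_t$ together with Proposition \ref{pro:comp}; once this is in place everything reduces to density and boundedness.
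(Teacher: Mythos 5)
Your proposal is correct and follows essentially the same route as the paper: both verify $\mathcal{I}^\sigma h_k=\lambda_k^{-\sigma}h_k$ by the identical Fubini computation (justified by $\mathcal{K}^\sigma(x,\cdot)\in L^1(\R)$ and $h_k\in L^\infty$), identify $\mathcal{H}^{-\sigma}$ with $\mathcal{I}^\sigma$ via density of the Hermite span, and obtain \eqref{coeff} by comparing the two bounded linear functionals on $L^p(\R)$. The only cosmetic difference is that the paper first establishes $\mathcal{H}^{-\sigma}=\mathcal{I}^\sigma$ on all of $L^2(\R)$ and then passes to $L^2\cap L^p$, whereas you work directly with the span of the $h_k$ inside $L^p$; this changes nothing of substance.
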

\begin{proof}
In view of Theorem \ref{thm:Her} it suffices to show
that $\mathcal{H}^{-\sigma}=\mathcal{I}^\sigma$ as operators on $L^2(\R)$.
This follows by observing that both operators,
being bounded on $L^2(\R)$,  coincide  on the dense in $L^2(\R)$ linear span of Hermite functions. 
Indeed, to check that  $\mathcal{H}^{-\sigma}h_k=\mathcal{I}^\sigma h_k$, $k\in \N$, we write
\begin{align*}
\Gamma(\sigma)\int_{\R}\mathcal{K}^{\sigma}(x,y)h_k(y)\,dy&=\int_{\R}\int_0^\infty 
G_t(x,y)t^{\sigma-1}\,dt\, h_k(y)\,dy\\
&=\int_0^\infty t^{\sigma-1} e^{-t\mathcal{H}}h_k(x) \, dt \\
&=\int_0^\infty t^{\sigma-1}e^{-t\lambda_k}\,dt\,h_k(x)\\
&=\Gamma(\sigma)\mathcal{H}^{-\sigma}h_k(x).
\end{align*}
Application of Fubini's theorem in the second identity above was possible since, 
for any fixed $x\in\R$,
$$
\int_{\R}\int_0^\infty  G_t(x,y)t^{\sigma-1}
 |h_k(y)|\,dt\,dy=\int_{\R}\mathcal{K}^{\sigma}(x,y)|h_k(y)|\,dy<\infty;
$$
this is because $\mathcal{K}^{\sigma}(x,\cdot)\in L^1(\R)$ for any fixed $x\in\R$, 
and $h_k\in L^\infty(\R)$. 

Considering \eqref{coeff}, given $1\le p<\infty$, the subspace $L^2(\R)\cap L^p(\R)$ 
is dense in $ L^p(\R)$, hence the extension
$\mathcal{H}^{-\sigma}_{pq}$ coincides with $\mathcal{I}^\sigma$ as a bounded operator from 
$L^p(\R)$ to $L^q(\R)$. Thus \eqref{coeff} follows from \eqref{co}.
\end{proof}

It is worth to point out that for $1<q<\infty$ the assertion of Corollary \ref{negative} remains valid 
if in \eqref{neg}, the definition of $\mathcal{H}^{-\sigma}$, the
multi-sequence $\{(2|k|+d)^{-\sigma}\}$
is replaced by another multi-sequence of similar smoothness, for instance by
$\{(|k|+1)^{-\sigma}\}$ (it would be reasonable to refer to the resulting operator as to the
{fractional integral operator} for Hermite function expansions; 
then accordingly $\lambda_k$ in \eqref{coeff} must be replaced by $(|k|+1)$). 
Indeed, this is a simple consequence of a multiplier theorem for
multi-dimensional Hermite function expansions, see \cite[Theorem 4.2.1]{Th} or 
\cite[Theorems 7.10-11]{DOS}, since the multiplier multi-sequence  
$\{(\frac{2|k|+d}{|k|+1})^\sigma\}$ defines a bounded operator on $L^q(\R)$ for each $1<q<\infty$.

Theorem \ref{thm:Her} extends the result of Gasper and Trebels \cite[Theorem 3]{GT} in several directions.
First of all, the result is multi-dimensional. Secondly, the restriction $\sigma<1/2$ 
(in the case $d=1$ discussed in \cite{GT})
is released. Finally, the constraint $\frac1q=\frac1p-2\sigma$ (still in the case $d=1$) occurs to be
unnecessary, and in addition the case $p=1$ is admitted.

We take an opportunity to generalize Theorem \ref{thm:Her}, and below we give a 
two-weight extension of Theorem \ref{thm:Her} in the spirit of the result by Stein and Weiss 
stated in Theorem \ref{thm:StWe}. It is clear that the range of $q$ that depends on $p$ in 
Theorem \ref{thm:Hwt} below is not optimal.
\begin{theor} \label{thm:Hwt}
Let $\sigma > 0$, $1<p \le q < \infty$, $a < d\slash p'$, $b < d\slash q$, $a+b \ge 0$.
\begin{itemize}
\item[(i)]
If $\sigma \ge d\slash 2$, then $\mathcal{I}^{\sigma}$ maps boundedly $L^p(\R,\|x\|^{a p})$ into
$L^q(\R,\|x\|^{-b q})$. 
\item[(ii)]
If $\sigma < d\slash 2$, then the same boundedness holds under the additional condition
\begin{equation} \label{cnd7}
\frac{1}{q} \ge \frac{1}{p} - \frac{2\sigma-a-b}{d}.
\end{equation}
\end{itemize}
Moreover, under the assumptions ensuring boundedness of $\mathcal{I}^{\sigma}$ from $L^p(\R,\|x\|^{a p})$
into $L^q(\R,\|x\|^{-b q})$,
\begin{equation}\label{co2}
\langle\mathcal{I}^\sigma f, h_k\rangle=\lambda_k^{-\sigma}\langle f, h_k\rangle, 
\qquad f\in L^p(\R, \|x\|^{a p}).
\end{equation}
\end{theor}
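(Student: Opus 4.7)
The plan is to reduce the two-weight estimate for $\mathcal{I}^{\sigma}$ to the classical Stein-Weiss theorem (Theorem \ref{thm:StWe}) applied to the Euclidean Riesz potential $I^{2\sigma_{0}}$, via a single pointwise kernel domination. The only ingredients needed are Proposition \ref{pro:comp} and the explicit piecewise form of $K^{\sigma}$.

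The central step is the pointwise bound: for every $\sigma_{0}\in(0,d/2)$ with $\sigma_{0}\le \sigma$,
\begin{equation*}
\mathcal{K}^{\sigma}(x,y)\lesssim \|x-y\|^{2\sigma_{0}-d}, \qquad x,y\in\R.
\end{equation*}
To verify this I inspect the cases. For $\|x-y\|<1$: when $\sigma<d/2$, the condition $\sigma_{0}\le \sigma$ gives $\|x-y\|^{2\sigma-d}\le \|x-y\|^{2\sigma_{0}-d}$; when $\sigma=d/2$, the logarithm is dominated by any strictly negative power; when $\sigma>d/2$, the local kernel is bounded and thus $\lesssim \|x-y\|^{2\sigma_{0}-d}\ge 1$. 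For $\|x-y\|\ge 1$, exponential decay beats any negative power.

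Given $p,q,a,b$ as in the statement, choose $\sigma_{0}=\frac{1}{2}[a+b+d(1/p-1/q)]$. The constraints $a<d/p'$ and $b<d/q$ yield $\sigma_{0}<d/2$; the conditions $a+b\ge 0$ and $p\le q$ give $\sigma_{0}\ge 0$; and $\sigma_{0}\le \sigma$ is precisely the hypothesis $1/q\ge 1/p-(2\sigma-a-b)/d$ of (ii), while in (i) it is automatic from $\sigma_{0}<d/2\le \sigma$. When $\sigma_{0}>0$, the equality $1/q=1/p-(2\sigma_{0}-a-b)/d$ holds by construction, so Theorem \ref{thm:StWe} applied to $I^{2\sigma_{0}}$ with the same exponents $a,b$ delivers the bound, which transfers to $\mathcal{I}^{\sigma}$ via the pointwise estimate. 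The degenerate case $\sigma_{0}=0$ forces $a+b=0$ and $p=q$; then $a<d/p'$ together with $-a=b<d/q=d/p$ gives $-d/p<a<d/p'$, precisely the Muckenhoupt $A_{p}$ range for $\|x\|^{ap}$. Since $K^{\sigma}$ is a radial nonincreasing function in $L^{1}(\R)$, Proposition \ref{pro:comp} yields $\mathcal{I}^{\sigma}f(x)\lesssim Mf(x)$ where $M$ is the Hardy-Littlewood maximal operator, and Muckenhoupt's theorem completes the argument.

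Finally, the coefficient identity \eqref{co2} is obtained by repeating verbatim the functional-analytic argument at the end of the proof of Corollary \ref{negative}: both sides of \eqref{co2} define bounded linear functionals on $L^{p}(\R,\|x\|^{ap})$, and they coincide on the dense linear span of Hermite functions by the spectral computation already carried out around \eqref{co}. Boundedness of the functional $f\mapsto \langle \mathcal{I}^{\sigma}f,h_{k}\rangle$ follows from H\"older's inequality combined with $h_{k}\in L^{q'}(\R,\|x\|^{bq'})$ (which uses $b>-d/q'$, a consequence of $a+b\ge 0$, $a<d/p'$ and $p\le q$), while boundedness of $f\mapsto \lambda_{k}^{-\sigma}\langle f,h_{k}\rangle$ requires $h_{k}\in L^{p'}(\R,\|x\|^{-ap'})$, which is secured by $a<d/p'$; in both places the rapid decay of Hermite functions handles integrability at infinity. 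The main obstacle is the degenerate case $\sigma_{0}=0$, where the Stein-Weiss reduction collapses and one must appeal to Muckenhoupt theory via maximal function domination; a subsidiary point is the density of $\operatorname{span}\{h_{k}\}$ in $L^{p}(\R,\|x\|^{ap})$ for weights in the $A_{p}$ range, which is standard but should be recorded.
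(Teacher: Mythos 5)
Your proof is correct, but it takes a genuinely different route from the paper's for the main boundedness claim. The paper reduces matters to the convolution operator $f\mapsto K^{\sigma}*f$ and then invokes Kerman's two-weight Young inequality (Lemma \ref{lem:conv}), carefully choosing auxiliary parameters $r$ and $\eta$ so that $K^{\sigma}\in L^r(\|x\|^{\eta r})$; Theorem \ref{thm:StWe} is used only at the endpoint of case (ii), and the singular case $p=q$, $a+b=0$ is handled by maximal-function domination and Muckenhoupt theory. You instead make Theorem \ref{thm:StWe} do all the work: by tuning the order $\sigma_0=\frac12[a+b+d(1/p-1/q)]$ of a dominating Riesz potential so that the Stein--Weiss scaling relation holds exactly, and checking that $0\le\sigma_0<d/2$ and $\sigma_0\le\sigma$ follow precisely from the hypotheses (with $\sigma_0\le\sigma$ being \eqref{cnd7} in case (ii)), you obtain the pointwise bound $\mathcal{K}^{\sigma}(x,y)\lesssim\|x-y\|^{2\sigma_0-d}$ and conclude directly; your treatment of the degenerate case $\sigma_0=0$ coincides with the paper's. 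Your route is shorter and avoids Kerman's lemma entirely; what it gives up is that the reduction to a pure power kernel discards the Gaussian decay of $K^{\sigma}$ at infinity, which is harmless under the standing restriction $p\le q$ but is the feature Kerman's lemma could in principle exploit further. The argument for \eqref{co2} is essentially the paper's; your extra remarks on $h_k\in L^{p'}(\|x\|^{-ap'})$ and on density of $\operatorname{span}\{h_k\}$ in $L^p(\R,\|x\|^{ap})$ are correct refinements of points the paper leaves implicit.
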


Note that implicitly Theorem \ref{thm:Hwt} asserts the inclusion
$L^p(\R,\|x\|^{a p}) \subset \domain \mathcal{I}^{\sigma}$
in all the cases when weighted $L^p-L^q$ boundedness holds. 
The proof of Theorem \ref{thm:Hwt} requires suitable weighted inequalities for convolutions.
We shall use those obtained by Kerman \cite{K}, which we formulate below for an easy reference.
\begin{lemma}[{\cite[Theorem 3.1]{K}}] \label{lem:conv}
Assume that the parameters $p,q,r,a,b,\eta$ satisfy
\begin{equation} \label{CK1}
1<p,q,r<\infty, \qquad \frac{1}{q} \le \frac{1}{p}+\frac{1}{r},
\end{equation}
\begin{equation} \label{CK2}
\frac{1}{q}-\frac{1}{p} - \bigg(\frac{a+b}{d}-1\bigg) = \frac{1}{r} + \frac{\eta}{d},
\end{equation}
\begin{equation} \label{CK3}
a < \frac{d}{p'}, \qquad b < \frac{d}{q}, \qquad \eta < \frac{d}{r'},
\end{equation}
\begin{equation} \label{CK4}
a+b \ge 0, \qquad a+ \eta \ge 0, \qquad b+\eta \ge 0.
\end{equation}
If $g \in L^r(\|x\|^{\eta r})$ and $f \in L^p(\|x\|^{ap})$, then $g * f(x)$ is well defined for
a.e. $x \in \R$ and
$$
\|g * f\|_{L^q(\|x\|^{-bq})} \le C \|g\|_{L^r(\|x\|^{\eta r})} \|f\|_{L^p(\|x\|^{ap})}
$$
with a constant $C$ independent of $g$ and $f$.
\end{lemma}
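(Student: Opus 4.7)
The plan is to reduce the weighted $L^p$--$L^q$ boundedness of $\mathcal{I}^{\sigma}$ to Kerman's Lemma \ref{lem:conv}. By Proposition \ref{pro:comp}, one has the pointwise majorization $|\mathcal{I}^{\sigma} f(x)| \lesssim (K^{\sigma} * |f|)(x)$, so it suffices to control the convolution operator $T^{\sigma}\colon f \mapsto K^{\sigma} * f$ from $L^p(\R, \|x\|^{ap})$ to $L^q(\R, \|x\|^{-bq})$. I will apply Lemma \ref{lem:conv} with $g = K^{\sigma}$ and a suitable choice of auxiliary exponents $r$ and $\eta$.

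The crucial observation is that the Gaussian tail of $K^{\sigma}$ makes integrability at infinity automatic, so the condition $K^{\sigma} \in L^r(\R, \|x\|^{\eta r})$ is determined solely by local behavior at the origin. A direct inspection of the three cases in the definition of $K^{\sigma}$ shows that this condition reads $\frac{1}{r} + \frac{\eta}{d} > 0$ when $\sigma \ge d/2$, and $\frac{1}{r} + \frac{\eta}{d} > \frac{d-2\sigma}{d}$ when $\sigma < d/2$. Kerman's compatibility relation \eqref{CK2} fixes $\frac{1}{r} + \frac{\eta}{d} = 1 + \frac{1}{q} - \frac{1}{p} - \frac{a+b}{d}$. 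In case (i) the hypotheses $a < d/p'$ and $b < d/q$ force this quantity to be strictly positive, since then $(a+b)/d < 1/p' + 1/q$. In case (ii) the corresponding strict inequality rewrites precisely as the strict version of \eqref{cnd7}. Once $1/r + \eta/d$ is thus fixed, the remaining freedom in the individual values of $r$ and $\eta$ is used to meet \eqref{CK1}, \eqref{CK3}, and \eqref{CK4} under the assumptions $p \le q$, $a < d/p'$, $b < d/q$, $a+b \ge 0$; each of these hypotheses turns out to correspond to exactly one of Kerman's constraints. The endpoint case $\frac{1}{q} = \frac{1}{p} - \frac{2\sigma - a - b}{d}$ in (ii) escapes this machinery and must be handled separately: as in the proof of Theorem \ref{thm:Her}, the comparison $G_t(x,y) \le W_t(x-y)$ with the Gauss--Weierstrass kernel yields $\mathcal{I}^{\sigma} f \lesssim I^{2\sigma}|f|$ pointwise, and the Stein--Weiss Theorem \ref{thm:StWe} (applied with parameter $2\sigma$) supplies exactly the missing endpoint bound.

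The spectral identity \eqref{co2} is obtained by a density argument parallel to that in Corollary \ref{negative}. Both sides of \eqref{co2} define continuous linear functionals on $L^p(\R, \|x\|^{ap})$: the left side is controlled by $\|\mathcal{I}^{\sigma} f\|_{L^q(\|x\|^{-bq})} \|h_k \|x\|^b\|_{q'}$, where the weighted Hermite-function norm is finite because $a+b \ge 0$, $a < d/p'$, and $p \le q$ combine to give $b > -d/q'$; the right side is controlled by $\|h_k \|x\|^{-a}\|_{p'} \|f\|_{L^p(\|x\|^{ap})}$, and finiteness of the first factor is exactly the assumption $a < d/p'$. The two functionals agree on the linear span of Hermite functions by the spectral identity $\mathcal{I}^{\sigma} h_k = \lambda_k^{-\sigma} h_k$ established in Corollary \ref{negative}, and since this span is dense in $L^p(\R, \|x\|^{ap})$, equality extends to the whole space. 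The main obstacle of the proof is the parameter matching in Kerman's lemma: one must verify that the single scalar equation \eqref{CK2} admits a solution $(r,\eta)$ compatible with all of \eqref{CK1}, \eqref{CK3}, \eqref{CK4} and the local integrability of $K^{\sigma}$ under exactly the hypotheses imposed, a bookkeeping task in which the strictness or non-strictness of each inequality enters in a precise way.
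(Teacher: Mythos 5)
Your proposal does not prove the statement at issue. The statement is Lemma \ref{lem:conv} itself --- Kerman's two-weight Young-type inequality $\|g*f\|_{L^q(\|x\|^{-bq})}\le C\|g\|_{L^r(\|x\|^{\eta r})}\|f\|_{L^p(\|x\|^{ap})}$ under the conditions \eqref{CK1}--\eqref{CK4} --- whereas what you wrote is a proof of Theorem \ref{thm:Hwt}, the weighted $L^p-L^q$ bound for the Hermite potential operator $\mathcal{I}^{\sigma}$, \emph{using} Lemma \ref{lem:conv} as a black box. Your opening sentence already gives this away (``reduce the weighted $L^p$--$L^q$ boundedness of $\mathcal{I}^{\sigma}$ to Kerman's Lemma \ref{lem:conv}''), and every subsequent step --- the majorization via Proposition \ref{pro:comp}, the integrability analysis of $K^{\sigma}$ near the origin, the choice of $(r,\eta)$ solving \eqref{CK2}, the endpoint via $\mathcal{I}^{\sigma}f\lesssim I^{2\sigma}|f|$ and Theorem \ref{thm:StWe}, the density argument for \eqref{co2} --- belongs to the application, not to the lemma. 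Invoking Lemma \ref{lem:conv} in an argument whose stated goal is Lemma \ref{lem:conv} is circular; no portion of your text addresses why the convolution inequality holds under \eqref{CK1}--\eqref{CK4}.

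A genuine proof of the lemma would have to establish the weighted convolution estimate from scratch, for instance by bounding the trilinear form $\iint |h(x)|\,\|x\|^{-b}\,|g(x-y)|\,\|x-y\|^{\eta}\,|f(y)|\,\|y\|^{a}\,dy\,dx$ after decomposing $\R\times\R$ according to the relative sizes of $\|x\|$, $\|y\|$ and $\|x-y\|$, applying the unweighted Young and H\"older inequalities on each region, with the conditions \eqref{CK3} and \eqref{CK4} governing the convergence of the resulting power integrals and \eqref{CK2} enforcing the scaling balance; this is in essence Kerman's argument in \cite{K}. For the record, the paper itself offers no proof of this statement either --- it is imported verbatim from \cite[Theorem 3.1]{K} --- so the correct blind response was either to reconstruct Kerman's proof or to state clearly that the result is taken from the literature. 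What you actually produced is a competent reconstruction of the paper's proof of Theorem \ref{thm:Hwt} (including the correct treatment of the endpoint case of \eqref{cnd7} and the functional-analytic argument for \eqref{co2}), but for the statement you were asked to prove it leaves the entire content unproved.
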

\begin{proof}[Proof of Theorem \ref{thm:Hwt}]
We first deal with case (i), that is when $\sigma \ge d\slash 2$.
As in the proof of Theorem \ref{thm:Her}, 
it is enough to prove the statement with $\mathcal{I}^{\sigma}$ replaced by the convolution
operator $T^{\sigma} \colon f \mapsto K^{\sigma} * f$. To get the desired boundedness of $T^{\sigma}$
we will apply Lemma \ref{lem:conv} with a suitable choice of $r$ and $\eta$, so that, 
in particular, assumptions \eqref{CK1}-\eqref{CK4} are satisfied. 

It is easy to check that the kernel $K^{\sigma}$ is in
$L^r(\|x\|^{\eta r})$ if and only if the right-hand side in \eqref{CK2} is positive.
Notice that \eqref{CK1} is satisfied with any $1<r<\infty$, since $p \le q$. 
Also, the first two inequalities of \eqref{CK3} hold by the assumptions, and together they
imply that the left-hand side in \eqref{CK2}
$$
\xi := \frac{1}{q}-\frac{1}{p} - \bigg(\frac{a+b}{d}-1\bigg) > 0.
$$
On the other hand, since by assumption $a+b \ge 0$, 
the first inequality in \eqref{CK4} holds and it follows
that the quantity $\xi$ is in the interval $(0,1)$ except for the singular case when $p=q$ and $a+b=0$,
which will be treated in a moment separately.
Finally, the third inequality in \eqref{CK3} is equivalent to saying that the right-hand side in
\eqref{CK2} is less than $1$.
To make use of Lemma \ref{lem:conv} it remains to show that any admissible value of $\xi$
can be attained by the right-hand side in \eqref{CK2}, with $1<r<\infty$ and $\eta$ such that
$a+\eta \ge 0$ and $b+\eta \ge 0$.
If $a,b>0$, then we simply take $\eta=0$ and let $r=1\slash \xi$.
When $a\le 0$ we take $\eta=-a$ and we have
$$
1 > \frac{1}{r} = \xi+\frac{a}{d} = \frac{1}{q}-\frac{1}{p}-\frac{b}{d}+1 > \frac{1}{q}-\frac{1}{p}
-\frac{1}{q}+1 = \frac{1}{p'}>0,
$$
so the appropriate choice of $r$ is again possible.
Finally, if $b \le 0$, then we take $\eta=-b$ and can choose suitable $r$ since now
$$
1 > \frac{1}{r} = \xi+\frac{b}{d} = \frac{1}{q}-\frac{1}{p}-\frac{a}{d}+1 > \frac{1}{q}-\frac{1}{p}
-\frac{1}{p'}+1 = \frac{1}{q}>0.
$$
We see that Lemma \ref{lem:conv} does the job except for the singular case distinguished above.

To cover the case when $p=q$ and $a+b=0$ we observe that $T^{\sigma}$ is controlled by the (centered)
Hardy-Littlewood maximal operator $M$. This is because the convolution kernel is integrable, radial,
and essentially radially decreasing, see \cite[Proposition 2.7]{Duo}.
But for $1<p<\infty$, $M$ is bounded on $L^p(w)$ with any weight $w$ in Muckenhoupt's class $A_p$.
The conclusion follows by the easy to verify fact that a power weight $w(x)=\|x\|^{ap}$ belongs to $A_p$
if and only if $-d\slash p<a<-d\slash p'$. This completes proving case (i).

We now treat case (ii), when $0<\sigma < d\slash 2$.
Since $\mathcal{I}^{\sigma}$ is dominated by a constant times $I^{2\sigma}$, 
see the proof of Theorem \ref{thm:Her},
the case of equality in \eqref{cnd7} is covered by Theorem \ref{thm:StWe}. Therefore we may assume
that
$$
\xi > 1-\frac{2\sigma}{d}.
$$ 
But the kernel $K^{\sigma}$ belongs to $L^r(\|x\|^{\eta r})$ 
if and only if the right-hand side in \eqref{CK2}
is greater than $1-2\sigma\slash d$. In this position we repeat the reasoning of case (i).

For the proof of \eqref{co2} we copy the argument leading to the proof of \eqref{co}. 
One only has to know that $\|h_k\|_{L^{q'}(\|x\|^{bq'})}<\infty$, but this holds in view of the
inequality $b>-d/q'$ following from the assumptions imposed on $d,p,q,a,b$. 
\end{proof}

\section{Laguerre function expansions of Hermite type} \label{sec:lag_herm}
Let $k=(k_1,\ldots,k_d) \in  \N$ and
$\alpha = (\alpha _1, \ldots , \alpha _d) \in (-1,\infty)^{d}$
be multi-indices.
The Laguerre function $\varphi _{k}^{\alpha}$ on $ \R_{+}$ is the tensor product
$$
\varphi _{k}^{\alpha}(x) = \varphi _{k_1}^{\alpha _1}(x_1) \cdot \ldots \cdot
\varphi _{k_d}^{\alpha _d}(x_d), \qquad x = (x_1, \ldots ,x_d)\in \R_{+},
$$
where $\varphi _{k_i}^{\alpha _i}$ are the one-dimensional Laguerre functions
$$
\varphi _{k_i}^{\alpha _i}(x_i) =
\left(\frac{2 \Gamma (k_i+1)}{\Gamma(k_i+\alpha _i +1)}\right)
^{1 \slash 2} L_{k_i}^{\alpha _i} (x_i^{2}) x_i^{\alpha _i + 1 \slash 2}
e^{-{x_i^2}/{2}}, \quad \quad x_i > 0, \quad i = 1,\ldots , d;
$$
given $\alpha_i>-1$ and $k_i\in\mathbb N$, $L^{\alpha_i}_{k_i}$
denotes the Laguerre polynomial of degree $k_i$ and order $\alpha_i,$ see \cite[p.\,76]{Leb}.

Each $\varphi _k^{\alpha}$ is an eigenfunction of the differential operator
$$
 L_\alpha^H=
-\Delta +\|x\|^2 + \sum _{i=1}^{d} \frac{1}{x_i^2} \left(\alpha _i^2 -\frac{1}{4}\right),
$$
the corresponding eigenvalue being $\lambda_k^\alpha=4|k|+2|\alpha |+2d$, that is
$
L_\alpha^H\varphi _k^{\alpha}=\lambda_k^\alpha\varphi _k^{\alpha};
$
here by $|\alpha|$ we mean $|\alpha|=\alpha_1+\ldots+\alpha_d$ (thus $|\alpha|$ may be negative).
The operator $L_\alpha^H$ is symmetric and positive in $L^2(\R_{+})$, and
the system $\{ \varphi _{k}^{\alpha} : k \in \N \}$ is an orthonormal basis in $L^2(\R_{+}).$

As defined in \cite[p.\,402]{NS2}, $L_\alpha^H$ has a self-adjoint extension 
$\mathcal{L}_\alpha^H$ whose spectral decomposition is given by the $\varphi_k^{\alpha}$ and
$\lambda_k^{\alpha}$.
The heat-diffusion semigroup $\{e^{-t\mathcal{L}_\alpha^H}\}_{t>0}$ 
generated by $\mathcal{L}_{\alpha}^H$,
\begin{equation*}
e^{-t\mathcal{L}_\alpha^H}f=\sum_{n=0}^{\infty} e^{-t(4n+2|\alpha|+2d)} \sum_{|k|=n}
\langle f,\varphi _k^{\alpha} \rangle\varphi_k^{\alpha}, \qquad f\in L^2(\R_+),
\end{equation*}
is a strongly continuous semigroup of contractions on $L^2(\R_+)$.
We have the integral representation
\begin{equation*} 
  e^{-t\mathcal{L}_\alpha^H}f(x)=\int_{\R_{+}} G_t^{\alpha,H}(x,y)f(y)\,dy, 
  \qquad x\in \R_{+},
\end{equation*}
where
\begin{equation*}
G^{\alpha,H}_t(x,y)=\sum_{n=0}^\infty e^{-t(4n+2|\alpha|+2d)} 
\sum_{|k|=n} \varphi_k^\alpha(x)\varphi_k^\alpha(y),\qquad x,y\in \R_{+}.
\end{equation*}
It is known, cf. \cite[(4.17.6)]{Leb}, that
$$
G^{\alpha,H}_t(x,y)=(\sinh 2t)^{-d}\exp\Big({-\frac{1}{2} \coth(2t)\big(\|x\|^{2}+\|y\|^{2}\big)}\Big)
\prod^{d}_{i=1} \sqrt{x_i y_i}\, I_{\alpha_i}\left(\frac{x_i y_i}{\sinh 2t}\right).
$$
Here $I_\nu$ denotes the modified Bessel function of the first kind and order $\nu$;
considered on the positive half-line, it is real, positive and smooth for any $\nu > -1$.

It was observed in \cite{NS2} (see the proof of Proposition 2.1 there) 
that given $\alpha \in [-1\slash 2, \infty)^d,$ there exists a constant $C_{\alpha}$ such that
\begin{equation}\label{cc}
G^{\alpha,H}_t(x,y) \le C_{\alpha} G^{\alpha_o,H}_t(x,y), \qquad t>0, \quad x,y \in \R_{+},
\end{equation}
with $\alpha_o = (-1\slash 2, \ldots, -1\slash 2)$. This was based on
the asymptotics, cf. \cite[(5.16.4),\,(5.16.5)]{Leb},
\begin{equation}\label{bes}
I_\nu(z)\simeq z^\nu,\quad z\to 0^+;\qquad I_\nu(z)\simeq z^{-1/2}e^z, \quad z\to \infty
\end{equation}
(more information on $C_{\alpha}$ can be obtained from properties of the function 
$\nu \mapsto I_{\nu}(x)$, $x\in\mathbb R_+$, see \cite{NS2}). 
Moreover, we have (cf. \cite[(A.2)]{NS2})
\begin{align}\label{nowe}
G_t^{\alpha_o,H}(x,y)&=\sum_{\varepsilon\in\mathcal{E}}G_t(\varepsilon x,y), \qquad x,y\in\R_+, 
\end{align}
where $\mathcal{E} =\{(\varepsilon_1,\ldots,\varepsilon_d): \varepsilon_i=\pm1\}$
and $\varepsilon x=(\varepsilon_1 x_1,\ldots, \varepsilon_d x_d)$.

Given $\sigma>0$, consider the operator $(\mathcal{L}_\alpha^H)^{-\sigma}$ defined on $L^2(\R_+)$ by the 
spectral series
\begin{equation}\label{neg_H}
(\mathcal{L}_\alpha^H)^{-\sigma}f= \sum_{k \in \mathbb{N}^d} 
(\lambda_k^\alpha)^{-\sigma}	\langle f,\varphi^\alpha_k\rangle \,\varphi^\alpha_k.
\end{equation}
Observe that $(\mathcal{L}_\alpha^H)^{-\sigma}$ is a contraction on $L^2(\R_+)$
if $\alpha \in [-1\slash 2,\infty)^d$.

We next define the potential kernel 
\begin{equation}\label{ker_H}
\mathcal{K}^{\alpha,\sigma}_H(x,y)= 
\frac1{\Gamma(\sigma)}\int_0^\infty G^{\alpha,H}_t(x,y) t^{\sigma-1}\,dt, \qquad x,y\in\R_{+},
\end{equation}
and the potential operator 
\begin{equation}\label{aa_H}
\mathcal{I}^{\alpha,\sigma}_H f(x)=\int_{\R_+}\mathcal{K}^{\alpha,\sigma}_H(x,y)f(y)\,dy, \qquad x\in\R_{+},
\end{equation}
and note that \eqref{cc} and  \eqref{nowe} lead immediately to 
\begin{equation} \label{dd_H}
\mathcal{K}^{\alpha,\sigma}_H(x,y)\le C_\alpha \mathcal{K}^{\alpha_o,\sigma}_H(x,y)
= C_{\alpha} \sum_{\varepsilon\in\mathcal{E}}\mathcal{K}^\sigma(\varepsilon x,y), \qquad x,y\in\R_+.
\end{equation}
Consequently, 
\begin{equation}\label{ee_H}
\mathcal{I}^{\alpha,\sigma}_Hf(x)\le C_{\alpha} \sum_{\varepsilon\in\mathcal{E}}\mathcal{I}^\sigma 
f(\varepsilon x), \qquad x\in\R_+,
\end{equation}
for any $f\ge0$ defined on $\R_+$, where on the right-hand side of \eqref{ee_H} the function $f$ is 
understood as the extension of $f$ onto $\R$ obtained by setting 0 outside $\R_+$. Note at this point that 
the integral in \eqref{ker_H} is convergent for every $x,y\in\R_+$ when  $\sigma > d/2$, while 
for $0<\sigma\le d/2$ the integral converges provided $x\neq y$ (this is a consequence of the former 
statement concerning convergence of the integral in \eqref{ker}, or it can be seen directly from the 
decay of $G_t^{\alpha,H}(x,y)$ when $t\to\infty$ and $t\to0^+$). Similarly, $L^p(\R_+)\subset 
\domain\mathcal{I}^{\alpha,\sigma}_H$, $1\le p\le\infty$, due to \eqref{ee_H}
and the former statement concerning $\domain \mathcal{I}^{\sigma}$; 
here $\domain \mathcal{I}^{\alpha,\sigma}_H$ denotes
the natural domain of $\mathcal{I}^{\alpha,\sigma}_H$ consisting of those functions $f$ 
for which the integral in \eqref{aa_H} is convergent $x$-a.e. 

Thus, in view of Theorem \ref{thm:Her}, we obtain the following result.
\begin{theor} \label{thm:lag_H}
Let $\alpha \in [-1/2,\infty)^{d}$ and $\sigma>0$. 
If $1\le p\le\infty$, $1\le q \le\infty$, and $(p,q)$ is one of the pairs specified in
Theorem \ref{thm:Her} for which \eqref{bound} holds, then 
\begin{equation}\label{bound2}
\|\mathcal{I}^{\alpha,\sigma}_H f\|_q\lesssim \|f\|_p, \qquad f\in L^p(\R_+),
\end{equation}
is also true. Moreover, for each such pair $(p,q)$ with $p<\infty$,
for any $k\in \N$ we have
\begin{equation}\label{co3}
\langle\mathcal{I}^{\alpha,\sigma}_H f, \varphi^\alpha_k\rangle=(\lambda_k^\alpha)^{-\sigma}\langle f,
\varphi^\alpha_k\rangle, \qquad f\in L^p(\R_+).
\end{equation}
\end{theor}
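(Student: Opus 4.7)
The plan is to deduce Theorem \ref{thm:lag_H} essentially for free from the pointwise domination \eqref{ee_H}, following the same template that produced \eqref{coeff} out of \eqref{co} in Corollary \ref{negative}. The whole proof is really bookkeeping once one has the Hermite-case estimates of Theorem \ref{thm:Her} in hand.

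For \eqref{bound2}, I would first split $f\in L^p(\R_+)$ into positive and negative parts, so it suffices to prove the estimate for $f\ge 0$. Extend such $f$ by zero to a function $\tilde f$ on $\R$; this extension preserves the $L^p$-norm for every $1\le p\le\infty$. Applying \eqref{ee_H} and taking $L^q(\R_+)$-norms gives
\[
\|\mathcal{I}^{\alpha,\sigma}_H f\|_{L^q(\R_+)} \le C_\alpha \sum_{\varepsilon\in\mathcal{E}} \|\mathcal{I}^\sigma \tilde f(\varepsilon\,\cdot)\|_{L^q(\R_+)}.
\]
In each summand the change of variables $y=\varepsilon x$ (Jacobian $1$) maps $\R_+$ onto the orthant $\varepsilon\R_+\subset\R$, so
\[
\|\mathcal{I}^\sigma \tilde f(\varepsilon\,\cdot)\|_{L^q(\R_+)} \le \|\mathcal{I}^\sigma \tilde f\|_{L^q(\R)}.
\]
Theorem \ref{thm:Her} bounds the right-hand side by $\|\tilde f\|_{L^p(\R)}=\|f\|_{L^p(\R_+)}$, uniformly over all $(p,q)$ for which \eqref{bound} holds. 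This yields \eqref{bound2}.

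For the coefficient identity \eqref{co3}, I would repeat verbatim the argument used to prove \eqref{coeff}. Since $\alpha\in[-1/2,\infty)^d$, each $\varphi_k^\alpha$ is bounded on $\R_+$, so $\varphi_k^\alpha\in L^{q'}(\R_+)$ for every $1\le q\le\infty$. Hence, by \eqref{bound2} and Hölder's inequality, the linear functional $f\mapsto\langle\mathcal{I}^{\alpha,\sigma}_H f,\varphi_k^\alpha\rangle$ is bounded on $L^p(\R_+)$, as is $f\mapsto(\lambda_k^\alpha)^{-\sigma}\langle f,\varphi_k^\alpha\rangle$. Both functionals agree on the linear span of $\{\varphi_j^\alpha:j\in\N\}$, which is dense in $L^2(\R_+)\cap L^p(\R_+)$ and therefore in $L^p(\R_+)$ for $1\le p<\infty$. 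To check this pointwise identity on basis elements, one interchanges the order of integration in
\[
\Gamma(\sigma)\int_{\R_+}\mathcal{K}^{\alpha,\sigma}_H(x,y)\varphi_k^\alpha(y)\,dy
=\int_0^\infty t^{\sigma-1}e^{-t\mathcal{L}_\alpha^H}\varphi_k^\alpha(x)\,dt
=\Gamma(\sigma)(\lambda_k^\alpha)^{-\sigma}\varphi_k^\alpha(x),
\]
using $e^{-t\mathcal{L}_\alpha^H}\varphi_k^\alpha=e^{-t\lambda_k^\alpha}\varphi_k^\alpha$ and evaluating the resulting gamma integral.

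The only nontrivial step is justifying Fubini in the display above, which requires $\mathcal{K}^{\alpha,\sigma}_H(x,\cdot)\in L^1(\R_+)$ for each fixed $x\in\R_+$. This, however, is immediate from \eqref{dd_H} and the corresponding integrability of $\mathcal{K}^\sigma(x,\cdot)$ on $\R$, which itself follows from Proposition \ref{pro:comp} and $K^\sigma\in L^1(\R)$. So the main obstacle is really just this integrability check; past it, the argument is mechanical, and no new ideas beyond those of Section \ref{sec:herm} are needed.
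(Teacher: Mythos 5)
Your proposal is correct and follows essentially the same route as the paper: the bound \eqref{bound2} is obtained exactly as the paper intends, from the pointwise domination \eqref{ee_H} together with Theorem \ref{thm:Her} (the paper treats this as immediate from the discussion preceding the theorem), and \eqref{co3} is proved by repeating the argument for \eqref{co}, with the Fubini step justified via $\mathcal{K}^{\alpha,\sigma}_H(x,\cdot)\in L^1(\R_+)$ from \eqref{dd_H} and the boundedness of $\varphi_k^\alpha$ for $\alpha\in[-1/2,\infty)^d$ — precisely the two facts the paper singles out. No gaps.
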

\begin{proof}
Only \eqref{co3} requires an explanation. 
It is proved in the same way as \eqref{co} was established. 
The important fact to be used is that $\|\varphi^\alpha_k\|_{q'}<\infty$ for $1\le q'\le \infty$, 
and this is indeed assured by the assumption imposed on $\alpha$.
\end{proof}
Furthermore, we also obtain.
\begin{corollary} \label{cor:neg2}
Let  $\alpha \in [-1/2,\infty)^{d}$, $\sigma>0$ and $(p,q)$, $p<\infty$, be such a pair 
that \eqref{bound2} holds.
Then $(\mathcal{L}_\alpha^H)^{-\sigma}$ extends to a bounded operator from 
$L^p(\R_+)$ to $L^q(\R_+)$. 
Moreover, denoting this extension by $(\mathcal{L}_\alpha^H)_{pq}^{-\sigma}$, 
in each of the cases, for any $k\in\N$ we have
\begin{equation} \label{coeff2}
\langle(\mathcal{L}_\alpha^H)_{pq}^{-\sigma} f, \varphi^\alpha_k\rangle=
(\lambda^\alpha_k)^{-\sigma}\langle f, \varphi^\alpha_k\rangle, 
\qquad f\in L^p(\R_+).
\end{equation}
\end{corollary}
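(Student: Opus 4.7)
The plan is to mimic precisely the proof of Corollary \ref{negative}. First I would show that the spectrally defined operator $(\mathcal{L}_\alpha^H)^{-\sigma}$ from \eqref{neg_H} coincides with the integral operator $\mathcal{I}_H^{\alpha,\sigma}$ on $L^2(\R_+)$. Both are bounded on $L^2(\R_+)$ (recall that $(\mathcal{L}_\alpha^H)^{-\sigma}$ is a contraction when $\alpha \in [-1/2,\infty)^d$, and $\mathcal{I}_H^{\alpha,\sigma}$ is $L^2$-bounded by Theorem \ref{thm:lag_H}), and the linear span of $\{\varphi_k^\alpha\}$ is dense in $L^2(\R_+)$. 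Hence it suffices to verify that $\mathcal{I}_H^{\alpha,\sigma}\varphi_k^\alpha = (\lambda_k^\alpha)^{-\sigma}\varphi_k^\alpha$ for every $k \in \N$. Plugging \eqref{ker_H} into \eqref{aa_H} and interchanging the $t$ and $y$ integrations by Fubini reduces the inner integral to $e^{-t\mathcal{L}_\alpha^H}\varphi_k^\alpha(x) = e^{-t\lambda_k^\alpha}\varphi_k^\alpha(x)$, and the resulting one-dimensional integral in $t$ evaluates to $\Gamma(\sigma)(\lambda_k^\alpha)^{-\sigma}$.

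The Fubini step needs the finiteness of $\int_{\R_+}\mathcal{K}_H^{\alpha,\sigma}(x,y)|\varphi_k^\alpha(y)|\,dy$ for each fixed $x \in \R_+$. By the pointwise bound \eqref{dd_H} this quantity is controlled by $C_\alpha \sum_{\varepsilon \in \mathcal{E}} \int_{\R} \mathcal{K}^\sigma(\varepsilon x, y)|\varphi_k^\alpha(y)|\,dy$, and since $\mathcal{K}^\sigma(z,\cdot) \in L^1(\R)$ for every fixed $z$ and $\varphi_k^\alpha \in L^\infty(\R_+)$ (this is precisely where $\alpha \in [-1/2,\infty)^d$ enters, via the factor $x_i^{\alpha_i+1/2}$ being bounded at the origin), the required integrability is clear. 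With the equality $(\mathcal{L}_\alpha^H)^{-\sigma} = \mathcal{I}_H^{\alpha,\sigma}$ in $L^2(\R_+)$ established, Theorem \ref{thm:lag_H} instantly yields the bounded extension from $L^p(\R_+)$ to $L^q(\R_+)$, namely $(\mathcal{L}_\alpha^H)^{-\sigma}_{pq} = \mathcal{I}_H^{\alpha,\sigma}$ on $L^2(\R_+) \cap L^p(\R_+)$.

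For the Fourier-type identity \eqref{coeff2}, the subspace $L^2(\R_+) \cap L^p(\R_+)$ is dense in $L^p(\R_+)$ for $1 \le p < \infty$, so $(\mathcal{L}_\alpha^H)^{-\sigma}_{pq}$ agrees with $\mathcal{I}_H^{\alpha,\sigma}$ as a bounded map $L^p(\R_+) \to L^q(\R_+)$. Hence \eqref{coeff2} follows directly from \eqref{co3} of Theorem \ref{thm:lag_H}. The underlying point is that the two functionals $f \mapsto \langle \mathcal{I}_H^{\alpha,\sigma} f, \varphi_k^\alpha\rangle$ and $f \mapsto (\lambda_k^\alpha)^{-\sigma}\langle f, \varphi_k^\alpha\rangle$ are both continuous on $L^p(\R_+)$ — the first by H\"older and the boundedness of $\mathcal{I}_H^{\alpha,\sigma}\colon L^p\to L^q$ together with $\varphi_k^\alpha \in L^{q'}(\R_+)$, the second by $\varphi_k^\alpha \in L^{p'}(\R_+)$, both membership facts being consequences of $\alpha \in [-1/2,\infty)^d$.

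The only genuinely delicate point is the Fubini justification in the $L^2$ step, and that is routine given the kernel bound \eqref{dd_H} and boundedness of Laguerre functions in the admissible range of $\alpha$. Everything else is a transparent transfer from Theorem \ref{thm:lag_H}, exactly parallel to how Corollary \ref{negative} was deduced from Theorem \ref{thm:Her}.
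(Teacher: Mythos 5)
Your proposal is correct and follows essentially the same route as the paper: the paper's proof of Corollary \ref{cor:neg2} simply declares the arguments parallel to those of Corollary \ref{negative} and singles out the Fubini justification, which it settles exactly as you do, via $\mathcal{K}^{\alpha,\sigma}_H(x,\cdot)\in L^1(\R_+)$ (from \eqref{dd_H}) together with $\varphi^\alpha_k\in L^\infty(\R_+)$. The density argument transferring \eqref{co3} to \eqref{coeff2} is likewise identical to the paper's.
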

\begin{proof}
The arguments are parallel to those used in the proof of Corollary \ref{negative}.
Application of Fubini's theorem in the relevant place is possible since,
for any fixed $x\in\R_+$,
$$
\int_{\R_+}\int_0^\infty  G_t^{\alpha,H}(x,y)t^{\sigma-1}
|\varphi^\alpha_k(y)|\,dt\,dy=\int_{\R_+}\mathcal{K}^{\alpha,\sigma}_H(x,y)|\varphi^\alpha_k(y)|\,dy<\infty;
$$
this is because $\mathcal{K}^{\alpha,\sigma}_H(x,\cdot)\in L^1(\R_+)$ for any $x\in\R_+$ fixed,
and $\varphi^\alpha_k\in L^\infty(\R_+)$. 
\end{proof}

A weighted analogue of Theorem \ref{thm:lag_H} is obtained by
combining \eqref{ee_H} with Theorem \ref{thm:Hwt}.
\begin{theor} \label{thm:lag_Hw}
Let $\alpha \in [-1/2,\infty)^{d}$, $\sigma > 0$, $1<p \le q < \infty$, $a < d\slash p'$, 
$b < d\slash q$, $a+b \ge 0$.
\begin{itemize}
\item[(i)]
If $\sigma \ge d\slash 2$, then $\mathcal{I}_H^{\alpha,\sigma}$ maps boundedly $L^p(\R_+,\|x\|^{a p})$ into
$L^q(\R_+,\|x\|^{-b q})$. 
\item[(ii)]
If $\sigma < d\slash 2$, then the same boundedness holds under the additional condition \eqref{cnd7}.
\end{itemize}
Moreover, under the assumptions ensuring boundedness of $\mathcal{I}^{\alpha,\sigma}_H$
from $L^p(\R_+,\|x\|^{a p})$ into $L^q(\R_+,\|x\|^{-b q})$,
\begin{equation*}
\langle\mathcal{I}^{\alpha,\sigma}_H f, \varphi^\alpha_k\rangle
=(\lambda^{\alpha}_k)^{-\sigma}\langle f, \varphi^\alpha_k\rangle, \qquad f\in L^p(\R_+, \|x\|^{a p}).
\end{equation*}
\end{theor}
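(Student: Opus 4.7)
The plan is to deduce Theorem \ref{thm:lag_Hw} directly from its Hermite counterpart, Theorem \ref{thm:Hwt}, by exploiting the pointwise domination \eqref{ee_H} together with two elementary observations: the power weight $x\mapsto \|x\|^{\gamma}$ is invariant under each sign flip $x \mapsto \varepsilon x$, $\varepsilon \in \mathcal{E}$, and these sign flips are measure-preserving bijections of $\R$ that decompose it into the orthants $\varepsilon \R_+$ (up to a null set of coordinate hyperplanes). Thus everything that was proved for $\mathcal{I}^{\sigma}$ on $\R$ with a power weight can be ``pulled back'' to $\R_+$ through the sum in \eqref{ee_H}.

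Concretely, given $f \in L^p(\R_+, \|x\|^{ap})$, extend it by zero to a function $\widetilde f$ on $\R$, noting that $\|\widetilde f\|_{L^p(\R, \|x\|^{ap})} = \|f\|_{L^p(\R_+, \|x\|^{ap})}$. Raising \eqref{ee_H} to the power $q$ and using the elementary inequality $(\sum_{\varepsilon} a_\varepsilon)^q \lesssim \sum_\varepsilon a_\varepsilon^q$ yields
\begin{equation*}
\int_{\R_+} |\mathcal{I}^{\alpha,\sigma}_H f(x)|^q \|x\|^{-bq}\, dx \lesssim \sum_{\varepsilon \in \mathcal{E}} \int_{\R_+} |\mathcal{I}^{\sigma} \widetilde f(\varepsilon x)|^q \|x\|^{-bq}\, dx.
\end{equation*}
Changing variables $y = \varepsilon x$ in each summand (the Jacobian is $1$ and $\|\varepsilon x\| = \|x\|$) reassembles the right-hand side as $\|\mathcal{I}^{\sigma} \widetilde f\|_{L^q(\R, \|y\|^{-bq})}^q$. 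Since the hypotheses on $\sigma, p, q, a, b$ are precisely those of Theorem \ref{thm:Hwt} (including \eqref{cnd7} in case (ii)), applying it and dropping the tilde gives the weighted $L^p$-$L^q$ bound.

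For the eigenvalue identity I would copy the argument used for \eqref{co3}: the linear functional $f \mapsto \langle \mathcal{I}^{\alpha,\sigma}_H f, \varphi_k^\alpha\rangle$ is bounded on $L^p(\R_+, \|x\|^{ap})$ provided $\varphi_k^\alpha \in L^{q'}(\R_+, \|x\|^{bq'})$, and on the $L^2$-dense span of $\{\varphi_k^\alpha\}$ it coincides with $f \mapsto (\lambda_k^\alpha)^{-\sigma} \langle f, \varphi_k^\alpha\rangle$ by Fubini (justified as in Corollary \ref{cor:neg2}); both functionals being continuous, they agree on the whole weighted $L^p$. The required integrability of $\varphi_k^\alpha$ against $\|x\|^{bq'}$ follows from Gaussian decay at infinity and, near the origin, from $\alpha_i + 1/2 \ge 0$ combined with $b > -d/q'$; this last inequality comes from $a+b \ge 0$, $a < d/p'$ and $p \le q$ (so $q' \le p'$), exactly as in the proof of Theorem \ref{thm:Hwt}.

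The main obstacle is really just verifying that the symmetrization preserves the weight structure and that the inclusion $L^p(\R_+, \|x\|^{ap}) \subset \domain \mathcal{I}^{\alpha,\sigma}_H$ is not vacuous, but both are immediate: the weight is symmetric under $\mathcal{E}$, and $\domain \mathcal{I}^{\sigma} \supset L^p(\R, \|x\|^{ap})$ under the stated hypotheses by Theorem \ref{thm:Hwt}, which via \eqref{ee_H} transfers to the Laguerre side. No further interpolation or kernel estimate beyond what is already in hand is needed.
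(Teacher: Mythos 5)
Your proposal is correct and follows essentially the same route as the paper, which simply states that the theorem is ``obtained by combining \eqref{ee_H} with Theorem \ref{thm:Hwt}''; you have merely written out the reflection/change-of-variables bookkeeping and the weight-invariance under $\mathcal{E}$ that the authors leave implicit. The treatment of the eigenvalue identity via boundedness of the functional $f\mapsto\langle\mathcal{I}^{\alpha,\sigma}_H f,\varphi_k^\alpha\rangle$ and the integrability check $\|\varphi_k^\alpha\|_{L^{q'}(\|x\|^{bq'})}<\infty$ likewise matches the paper's argument for \eqref{co}, \eqref{co2} and \eqref{co3}.
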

Note that implicitly Theorem \ref{thm:lag_Hw} asserts the inclusion
$L^p(\R_+,\|x\|^{a p}) \subset \domain \mathcal{I}^{\alpha,\sigma}_H$
in all the cases when weighted $L^p-L^q$ boundedness holds. 

Similarly as in the Hermite setting, we point out that for $1<q<\infty$ the assertion of 
Corollary \ref{cor:neg2} remains valid if in \eqref{neg_H}, the definition of 
$\mathcal{L}_H^\alpha$, the multi-sequence
$\{(\lambda_k^{\alpha})^{-\sigma}\}$ is replaced by $\{(|k|+1)^{-\sigma}\}$ 
(then accordingly $\lambda_k^\alpha$ in \eqref{coeff2} must be replaced by $(|k|+1)$)
or by another sufficiently smooth multi-sequence. Again this is a simple consequence of a 
multiplier theorem, this time for Laguerre expansions of Hermite type, see 
\cite[Theorem 6.4.3]{Th} or  \cite[Theorem 7.12]{DOS}, since 
the multiplier multi-sequence $\{(\frac{4|k|+2|\alpha|+2d}{|k|+1})^\sigma\}$ 
generates a bounded operator on each $L^q(\R_+)$, $1<q<\infty$.

\section{Laguerre function expansions of convolution type} \label{sec:lag}
In this section we shall work on the space $\R_{+}$, $d\ge 1$, equipped with the measure
$$
\mu_{\alpha}(dx) = x_1^{2\alpha_1+1} \cdot \ldots \cdot x_d^{2\alpha_d+1} \, dx.
$$
Given multi-indices $k \in \N$ and $\alpha \in (-1,\infty)^{d}$,
the Laguerre functions $\ell^{\alpha}_k$ are
$$
\ell_{k}^{\alpha}(x) = \ell _{k_1}^{\alpha _1}(x_1) \cdot \ldots \cdot
\ell_{k_d}^{\alpha _d}(x_d), \qquad x = (x_1, \ldots ,x_d)\in \R_{+},
$$
where $\ell_{k_i}^{\alpha _i}$ are the one-dimensional Laguerre functions
$$
\ell_{k_i}^{\alpha _i}(x_i) =
\left(\frac{2 \Gamma (k_i+1)}{\Gamma(k_i+\alpha _i +1)}\right)
^{1 \slash 2} L_{k_i}^{\alpha _i} (x_i^{2})e^{-{x_i^2}/{2}}, \quad \quad x_i > 0,
\quad i = 1,\ldots , d.
$$

Each $\ell_k^{\alpha}$ is an eigenfunction of the differential operator 
$$
L_\alpha=-\Delta +\|x\|^2 - \sum _{i=1}^{d} \frac{2\alpha_i+1}{x_i} \frac{\partial}{\partial x_i}
$$
with the corresponding eigenvalue $\lambda_k^\alpha=4|k|+2|\alpha |+2d$, that is
$
L_\alpha\ell_k^{\alpha}=\lambda_k^\alpha\ell_k^{\alpha}.
$
The operator $L_\alpha$ is symmetric and positive in $L^2(d\mu_{\alpha})$,
and the system $\{ \ell_{k}^{\alpha} : k \in \N \}$ is an orthonormal
basis in $L^2(d\mu_{\alpha}).$

Let $\mathcal{L}_\alpha $ denote the self-adjoint extension of $L_\alpha$ as defined in 
\cite[p.\,646]{NS1}, whose spectral decomposition is given by the $\ell_k^{\alpha}$ 
and $\lambda_k^{\alpha}$.
The heat-diffusion semigroup $\{e^{-t\mathcal{L}_{\alpha}}:t> 0\}$ generated by
$\mathcal{L}_{\alpha}$ is a strongly continuous semigroup of contractions on
$L^2(d\mu_{\alpha})$. By the spectral theorem,
\begin{equation*}
e^{-t\mathcal{L}_{\alpha}} f=\sum_{n=0}^\infty e^{-t(4n+2|\alpha|+2d)} \sum_{|k|=n}
\langle f,\ell_k^{\alpha} \rangle_{d\mu_{\alpha}}\ell_k^{\alpha}, \qquad f\in L^2(d\mu_{\alpha}).
\end{equation*}
We have the integral representation
\begin{equation*}
  e^{-t\mathcal{L}_{\alpha}} f(x)=\int_{\R_{+}} G_t^\alpha(x,y)f(y)\,d\mu_{\alpha}(y),
  \qquad x\in \R_{+},
\end{equation*}
where the heat kernel is given by
\begin{equation*}
G^\alpha_t(x,y)=\sum_{n=0}^\infty e^{-t(4n+2|\alpha|+2d)} \sum_{|k|=n}
\ell_k^\alpha(x)\ell_k^\alpha(y).
\end{equation*}

As in the previous settings, we define the potential kernel
\begin{equation} \label{ker2}
\mathcal{K}^{\alpha,\sigma}(x,y)= 
\frac1{\Gamma(\sigma)}\int_0^\infty G^{\alpha}_t(x,y) t^{\sigma-1}\,dt, \qquad x,y\in\R_+,
\end{equation}
and the potential operator 
\begin{equation*}
\mathcal{I}^{\alpha,\sigma} f(x)=\int_{\R_+}\mathcal{K}^{\alpha,\sigma}(x,y)f(y)\,d\mu_\alpha(y).
\end{equation*}

Let $S_{\alpha}$ be the multiplication
operator
$$
S_{\alpha}f(x) = f(x) \prod_{i=1}^d x_i^{\alpha_i+1\slash 2}, \qquad x \in \R_{+}.
$$
Then $S_{\alpha}$ is an isometric isomorphism of $L^2(\R_{+},d\mu_{\alpha})$ onto
$L^2(\R_{+},dx)$, which intertwines the differential operators
$L_{\alpha}^H$ and $L_{\alpha}$,
$$
L_{\alpha}^H \circ S_{\alpha} = S_{\alpha} \circ L_{\alpha}.
$$
Moreover, $\varphi^{\alpha}_k = S_{\alpha} \ell^{\alpha}_k$, $k\in \N$,
and the eigenvalues in both settings coincide. Therefore, 
\begin{align}\label{11}
G^\alpha_t(x,y)& = G^{\alpha,H}_t(x,y)\prod^{d}_{i=1} (x_i y_i)^{-\alpha_i-1/2}, \\ \label{22}
\mathcal{K}^{\alpha,\sigma}(x,y)
& =\mathcal{K}^{\alpha,\sigma}_H(x,y) \prod^{d}_{i=1} (x_i y_i)^{-\alpha_i-1/2}
\end{align}
and hence
\begin{equation}\label{33}
\mathcal{I}^{\alpha,\sigma} f(x)=\Big(\prod^{d}_{i=1}
x_i^{-\alpha_i-1/2}\Big)\mathcal{I}^{\alpha,\sigma}_H\big(S_\alpha f\big)(x).
\end{equation}
It follows from
\eqref{11} that comments concerning convergence of the integral in \eqref{ker2} are exactly the same as
those describing convergence of the integral in \eqref{ker_H}. 
Moreover, \eqref{33} forces the following relation between the natural domains of
$\mathcal{I}^{\alpha,\sigma}$ and 
$\mathcal{I}^{\alpha,\sigma}_H $: $f\in \domain \mathcal{I}^{\alpha,\sigma}$ if and only if
$S_{\alpha}^{-1}f\in \domain \mathcal{I}^{\alpha,\sigma}_H$.
Further, \eqref{33} gives the following interplay between weighted $L^p-L^q$ estimates for
$\mathcal{I}^{\alpha,\sigma}$ and $\mathcal{I}^{\alpha,\sigma}_H$: 
given two weights $U$ and $V$ on $\R_+$, the inequality
$$
\| \mathcal{I}^{\alpha,\sigma}f\|_{L^q(Vd\mu_{\alpha})}\le 
C \|f\|_{L^p(Ud\mu_{\alpha})}, \qquad f\in L^p(Ud\mu_{\alpha}),
$$ 
is equivalent to 
$$
\| \mathcal{I}^{\alpha,\sigma}_Hf\|_{L^q(\widetilde V)}\le C \|f\|_{L^p(\widetilde U)}, 
\qquad f\in L^p(\widetilde U),
$$ 
where 
$$
\widetilde{U}(x) = U(x) \prod^{d}_{i=1} x_i^{(2\alpha_i+1)(1-\frac p2)}, \qquad 
\widetilde{V}(x) = V(x) \prod^{d}_{i=1} x_i^{(2\alpha_i+1)(1-\frac q2)}.
$$
Consequences of this equivalence regarding $L^p-L^q$ estimates are commented in 
Section \ref{sec:final} below.

Our main results for the Laguerre system $\{\ell_k^{\alpha}\}$ read as follows
(notice that now the role of the dimension is played by the quantity $2|\alpha|+2d$).
\begin{theor} \label{thm:mainunw}
Assume that $\alpha \in [-1\slash 2,\infty)^d$. Let $\sigma > 0$ and $1\le p < \infty$, $1\le q < \infty$.
If $\sigma \ge |\alpha|+d$, then
\begin{equation} \label{eq32}
\|\mathcal{I}^{\alpha,\sigma}f\|_{L^q(d\mu_{\alpha})} \lesssim \|f\|_{L^p(d\mu_{\alpha})}, \qquad
f \in L^p(\R_+,d\mu_{\alpha}).
\end{equation}
If $0 < \sigma < |\alpha|+d$, then \eqref{eq32} holds under the additional condition
$$
\frac{1}{p}-\frac{\sigma}{|\alpha|+d} \le \frac{1}{q} < \frac{1}{p} + \frac{\sigma}{|\alpha|+d},
$$
with exclusion of the case when $p=1$ and $q=\frac{|\alpha|+d}{|\alpha|+d-\sigma}$.
Moreover, under the assumptions ensuring \eqref{eq32},
\begin{equation} \label{eq45}
\langle\mathcal{I}^{\alpha,\sigma} f, \ell^\alpha_k\rangle_{d\mu_{\alpha}}
=(\lambda^{\alpha}_k)^{-\sigma}\langle f, \ell^\alpha_k\rangle_{d\mu_{\alpha}}, 
\qquad f\in L^p(\R_+, d\mu_{\alpha}).
\end{equation}
\end{theor}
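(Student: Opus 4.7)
The plan is to proceed in several steps, following the strategy announced in the introduction. A tempting shortcut would be to use the intertwining relation \eqref{33} and then quote Theorem \ref{thm:lag_Hw}, but with $U=V=1$ the weights produced on the Hermite side are products of monomial factors $\prod_i x_i^{c_i}$ rather than radial power weights $\|x\|^c$, so that result does not apply off the shelf. A different route is needed.

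Step one: half-integer type indices. Suppose that $\alpha_i = (m_i-2)/2$ with $m_i \in \mathbb{N}_+$ for every $i$, so that $2|\alpha|+2d = m_1+\cdots+m_d =: D$. Under this assumption, $d\mu_\alpha$ on $\R_+$ agrees, up to a multiplicative constant, with the push-forward of Lebesgue measure on $\mathbb{R}^{D}$ to the block-radii coordinates $x_i = \|z^{(i)}\|$, where $(z^{(1)},\ldots,z^{(d)}) \in \mathbb{R}^{m_1}\times\cdots\times\mathbb{R}^{m_d}$. Via the classical identification of $m_i$-dimensional Hermite functions on the radial sections with Laguerre polynomials of order $(m_i-2)/2$, one checks that $\mathcal{I}^{\alpha,\sigma}$ is unitarily equivalent to the restriction of the Hermite potential operator $\mathcal{I}^{\sigma}$ on $\mathbb{R}^{D}$ to block-radial functions. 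Hence the claimed $L^p(d\mu_\alpha)$--$L^q(d\mu_\alpha)$ bound follows from Theorem \ref{thm:Her} applied in ambient dimension $D$; the constraints on $p,q$ and the endpoint exclusion at $p=1$, $q = D/(D-2\sigma)$ match precisely the statement here, since $D = 2|\alpha|+2d$ takes over the role of $d$.

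Step two: the full range $\alpha \in [-1\slash 2,\infty)^d$. Here I invoke the convexity principle of Section \ref{sec:aver}. Combining the integral representation of the Bessel function $I_{\alpha_i}$ appearing in $G_t^{\alpha,H}$ with \eqref{11}, the potential kernel $\mathcal{K}^{\alpha,\sigma}$ can be displayed as an average of positive kernels against a probability measure whose dependence on $\alpha$ is explicit and elementary. Step one gives the required $L^p$--$L^q$ control on the dense set of half-integer parameters, and the convexity principle transmits it to every $\alpha$ in the convex hull of that set, i.e.\ to the whole region $[-1\slash 2,\infty)^d$. This step is the main obstacle: the averaging/interpolation device must be set up carefully because the underlying measure $d\mu_\alpha$ itself varies with $\alpha$, so the function spaces on both sides of the inequality move with the parameter and the dependence must be reconciled so that a single convexity argument handles the full $L^p$--$L^q$ scale together with the endpoint exclusion.

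Finally, the spectral identity \eqref{eq45} is obtained exactly as \eqref{co} in Corollary \ref{negative}. For fixed $k \in \N$ the functional $f \mapsto \langle \mathcal{I}^{\alpha,\sigma} f, \ell_k^{\alpha}\rangle_{d\mu_\alpha}$ is bounded on $L^p(d\mu_\alpha)$, since the already proved $L^p$--$L^q$ estimate couples with $\ell_k^{\alpha} \in L^{q'}(d\mu_\alpha)$ (which is immediate from the boundedness and exponential decay of $\ell_k^{\alpha}$). On the dense subspace $L^p(d\mu_\alpha)\cap L^2(d\mu_\alpha)$ this functional agrees with $f \mapsto (\lambda_k^{\alpha})^{-\sigma}\langle f,\ell_k^{\alpha}\rangle_{d\mu_\alpha}$, by Fubini applied to the kernel representation \eqref{ker2} together with the calculation $\int_0^\infty t^{\sigma-1} e^{-t\lambda_k^{\alpha}}\,dt = \Gamma(\sigma)(\lambda_k^{\alpha})^{-\sigma}$; density extends the identity to all of $L^p(d\mu_\alpha)$.
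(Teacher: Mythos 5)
Your Step one and your verification of \eqref{eq45} are correct and coincide with the paper's argument: transference through $\phi(z^{(1)},\ldots,z^{(d)})=(\|z^{(1)}\|,\ldots,\|z^{(d)}\|)$ reduces the half-integer case to Theorem \ref{thm:Her} in ambient dimension $2|\alpha|+2d$, and the duality/density argument for the spectral identity is the standard one. The gap is in Step two, which you yourself flag as ``the main obstacle'' but do not resolve. Two things are missing. First, the mechanism you describe --- displaying $\mathcal{K}^{\alpha,\sigma}$ as an average of positive kernels against a probability measure via an integral representation of $I_{\alpha_i}$ --- is not how the convexity principle of Section \ref{sec:aver} works, and it is unclear how such an additive averaging would transmit an $L^p(d\mu_\alpha)$--$L^q(d\mu_\alpha)$ bound between different $\alpha$, precisely because the measure on both sides moves with the parameter. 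The actual device is multiplicative: $G_t^{\alpha}\simeq (G_t^{\beta})^{\lambda}(G_t^{\gamma})^{1-\lambda}$ for $\alpha=\lambda\beta+(1-\lambda)\gamma$ (Proposition \ref{pro:CP}), followed by two applications of H\"older's inequality and the splitting $f=f_1^{\lambda}f_2^{1-\lambda}$ with $f_1,f_2$ chosen to absorb the density ratios $w_\alpha/w_\beta$ and $w_\alpha/w_\gamma$ (Theorem \ref{thm:avg}); this is exactly what reconciles the moving function spaces.

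Second, and more importantly, Theorem \ref{thm:avg} interpolates between two \emph{pairs} $(\beta,\sigma_1)$ and $(\gamma,\sigma_2)$ subject to $\sigma=\lambda\sigma_1+(1-\lambda)\sigma_2$. If you keep the same $\sigma$ at both half-integer endpoints, the hypotheses of the half-integer result can fail at one of them even though they hold at $\alpha$: for instance $\frac1p-\frac{\sigma}{|\gamma|+d}\le\frac1q$ is strictly stronger than $\frac1p-\frac{\sigma}{|\alpha|+d}\le\frac1q$ when $|\gamma|>|\alpha|$. The missing idea is to rescale the order of the potential at the endpoints, $\sigma_\beta=\sigma\,\frac{|\beta|+d}{|\alpha|+d}$ and $\sigma_\gamma=\sigma\,\frac{|\gamma|+d}{|\alpha|+d}$, which keeps $\sigma=\lambda\sigma_\beta+(1-\lambda)\sigma_\gamma$ and makes the hypotheses (including the case split at $\sigma=|\alpha|+d$) at $(\beta,\sigma_\beta)$ and $(\gamma,\sigma_\gamma)$ equivalent to those at $(\alpha,\sigma)$. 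Finally, for $d\ge2$ a generic $\alpha\in[-1/2,\infty)^d$ is \emph{not} a convex combination of two half-integer multi-indices (they form a discrete lattice, not a dense set, and a segment joining two lattice points has rational direction), so the two-point convexity principle must be iterated $d$ times, freeing one coordinate of $\alpha$ at a time while the remaining ones stay half-integer. With these ingredients supplied, your outline closes.
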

Considering the weighted setting, we also prove the following.
\begin{theor} \label{thm:main}
Assume that $\alpha \in [-1\slash 2, \infty)^d$. 
Let $\sigma > 0$, $1<p \le q < \infty$ and 
\begin{equation*} 
a < (2|\alpha|+2d)\slash p', \qquad 
b < (2|\alpha|+2d)\slash q, \qquad a+b \ge 0.
\end{equation*}
\begin{itemize}
\item[(i)]
If $\sigma \ge |\alpha|+d$, then $\mathcal{I}^{\alpha,\sigma}$ maps boundedly 
$L^p(\R_+,\|x\|^{a p}d\mu_{\alpha})$ into $L^q(\R_+,\|x\|^{-b q}d\mu_{\alpha})$. 
\item[(ii)]
If $\sigma < |\alpha|+d$, then the same boundedness holds under the additional condition
\begin{equation*} 
\frac{1}{q} \ge \frac{1}{p} - \frac{2\sigma-a-b}{2|\alpha|+2d}.
\end{equation*}
\end{itemize}
Moreover, under the assumptions ensuring boundedness of $\mathcal{I}^{\alpha,\sigma}$ 
from $L^p(\R_+,\|x\|^{a p}d\mu_{\alpha})$ into $L^q(\R_+,\|x\|^{-b q}d\mu_{\alpha})$, 
\begin{equation} \label{co44}
\langle\mathcal{I}^{\alpha,\sigma} f, \ell^\alpha_k\rangle_{d\mu_{\alpha}}
=(\lambda^{\alpha}_k)^{-\sigma}\langle f, \ell^\alpha_k\rangle_{d\mu_{\alpha}}, 
\qquad f\in L^p(\R_+, \|x\|^{a p} d\mu_{\alpha}).
\end{equation}
\end{theor}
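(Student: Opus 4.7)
The plan is to follow the two-step strategy announced in the introduction: first establish Theorem \ref{thm:main} for half-integer multi-indices $\alpha$ by transferring from the Hermite setting in a higher dimensional ambient space, and then apply the convexity principle of Section \ref{sec:aver} to interpolate across the continuous range $\alpha \in [-1/2,\infty)^d$. The spectral identity \eqref{co44} is handled separately at the end.

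For the half-integer case I would fix $\alpha$ with each $\alpha_i + 1/2 \in \mathbb{N}$, set $n_i = 2\alpha_i + 2$, and observe that $N := n_1+\ldots+n_d = 2|\alpha|+2d$ is a positive integer that will play the role of a dimension. Writing $\mathbb{R}^N = \mathbb{R}^{n_1}\times\cdots\times\mathbb{R}^{n_d}$ and passing to polar coordinates within each block identifies $L^p(\R_{+}, \|x\|^{ap}\,d\mu_\alpha)$ isometrically, up to a multiplicative constant, with the subspace of $L^p(\mathbb{R}^N, \|y\|^{ap})$ consisting of functions that are radial in each of the $d$ blocks of coordinates. A direct comparison of the Mehler-type formulas for $G^{\alpha}_t$ and the $N$-dimensional Hermite kernel $G_t$ shows that under this identification $\mathcal{I}^{\alpha,\sigma}$ is the restriction of the Hermite potential $\mathcal{I}^{\sigma}$ (in dimension $N$) to multi-radial functions. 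Theorem \ref{thm:Hwt} applied in dimension $N$ then delivers exactly the bound of Theorem \ref{thm:main}: the constraints $a < N/p'$, $b < N/q$, $a+b \ge 0$, and the threshold $\sigma \ge N/2 = |\alpha|+d$ match the hypotheses precisely, as does the supplementary condition \eqref{cnd7} with dimension $N$ in place of $d$.

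To cover all $\alpha \in [-1/2,\infty)^d$ I would invoke the convexity principle of Section \ref{sec:aver}. Since the power weights $\|x\|^{ap}$ and $\|x\|^{-bq}$ and the exponents $p,q$ do not depend on $\alpha$, and the admissible range of $(a,b,\sigma,p,q)$ is either independent of or monotone in $|\alpha|$, the principle should let us transfer the $L^p \to L^q$ operator norm bound from the dense set of half-integer multi-indices to every $\alpha$ in the orthant. This is where I expect the main obstacle: the formulation of the convexity principle relies on expressing the kernel $\mathcal{K}^{\alpha,\sigma}(x,y)$ as a suitable average in $\alpha$ over half-integer values, and the requisite integral representation must be established, together with the attendant regularity in $\alpha$, before the interpolation argument can be carried out. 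All of this is deferred to Section \ref{sec:aver}.

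For the spectral identity \eqref{co44} I would follow the scheme of Theorem \ref{thm:Her} and Corollaries \ref{negative} and \ref{cor:neg2}. Both sides of \eqref{co44} define continuous linear functionals of $f \in L^p(\R_{+}, \|x\|^{ap}\,d\mu_\alpha)$: the left-hand side by the just-established boundedness, and the right-hand side because the assumption $b < (2|\alpha|+2d)/q$ combined with the standard pointwise bounds for $\ell_k^\alpha$ when $\alpha \in [-1/2,\infty)^d$ guarantees $\ell_k^\alpha \in L^{q'}(\R_{+}, \|x\|^{bq'}\,d\mu_\alpha)$. It then suffices to verify the identity on the dense linear span of $\{\ell_k^\alpha\}$, where an application of Fubini, justified by $\mathcal{K}^{\alpha,\sigma}(x,\cdot) \in L^1(d\mu_\alpha)$ for fixed $x$ together with boundedness of $\ell_k^\alpha$, reduces it to the spectral identity $(\mathcal{L}_\alpha)^{-\sigma}\ell_k^\alpha = (\lambda_k^\alpha)^{-\sigma}\ell_k^\alpha$, which is immediate from the definition of $\mathcal{L}_\alpha$.
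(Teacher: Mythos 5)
Your overall architecture matches the paper's: transference to the $N$-dimensional Hermite setting ($N=2|\alpha|+2d$) for half-integer $\alpha$, then the convexity principle to reach general $\alpha$, then a density argument for \eqref{co44}. The half-integer step and the proof of \eqref{co44} are essentially the paper's arguments and are fine. The gap is in the interpolation step, and it is twofold. First, you describe the convexity principle as resting on ``expressing the kernel $\mathcal{K}^{\alpha,\sigma}(x,y)$ as a suitable average in $\alpha$ over half-integer values''; that is not what Section \ref{sec:aver} provides. The actual mechanism is pointwise log-convexity of the heat kernel in $\alpha$, namely $G_t^{\alpha}\simeq (G_t^{\beta})^{\lambda}(G_t^{\gamma})^{1-\lambda}$ for $\alpha=\lambda\beta+(1-\lambda)\gamma$, combined with H\"older's inequality; this forces the statement to involve $\sigma=\lambda\sigma_1+(1-\lambda)\sigma_2$ and geometric means of the weights, not a single fixed $\sigma$ and fixed weights.

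Second, and more seriously, your claim that one can keep $a$, $b$, $\sigma$ fixed because ``the admissible range of $(a,b,\sigma,p,q)$ is either independent of or monotone in $|\alpha|$'' is false, and the step would fail as written. Take $d=1$, $\alpha=1/4$, $a=b=0$, $\sigma=|\alpha|+1=5/4$ (case (i) at $\alpha$, so no constraint beyond $p\le q$), and $p=1.1$, $q=100$. Writing $\alpha$ as a convex combination of the half-integers $\beta=-1/2$ and $\gamma=1/2$, at the upper endpoint one has $\sigma=5/4<3/2=|\gamma|+1$, so case (ii) applies there and demands $1/q\ge 1/p-2\sigma/(2|\gamma|+2)$, i.e. $0.01\ge 0.909-0.833$, which fails. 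The paper's proof avoids this precisely by rescaling the parameters at the endpoints, $\sigma_{\beta}=\sigma\frac{|\beta|+d}{|\alpha|+d}$, $a_{\beta}=a\frac{|\beta|+d}{|\alpha|+d}$, etc., so that the hypotheses at $(\beta,a_{\beta},b_{\beta},\sigma_{\beta})$ and $(\gamma,a_{\gamma},b_{\gamma},\sigma_{\gamma})$ are \emph{equivalent} to those at $(\alpha,a,b,\sigma)$, including the split into cases (i) and (ii); this is exactly why Theorem \ref{thm:avg} is formulated with varying $\sigma$ and weights. You also omit that a generic $\alpha\in[-1/2,\infty)^d$ need not lie on a segment between two half-integer multi-indices (the direction $\alpha-\beta$ may have irrational coordinate ratios), so the principle must be iterated $d$ times, one coordinate at a time. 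These pieces are the substantive content of the paper's proof of Theorem \ref{thm:main}, and your proposal does not supply them.
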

The proofs will be given in the next section, after elaborating necessary tools.
Note that implicitly Theorem \ref{thm:mainunw} asserts the inclusion
$L^p(\R_+,d\mu_{\alpha}) \subset \domain \mathcal{I}^{\alpha,\sigma}$, $1\le p < \infty$.
Similarly, Theorem \ref{thm:main} asserts the inclusion
$L^p(\R_+,\|x\|^{a p}d\mu_{\alpha}) \subset \domain \mathcal{I}^{\alpha,\sigma}$
in all the cases when weighted $L^p-L^q$ boundedness holds. 
Notice also that for $\alpha_o=(-1/2,\ldots,-1/2)$ Theorems \ref{thm:main} and \ref{thm:lag_Hw} coincide,
as it should be, since for $\alpha=\alpha_o$ both settings coincide.
Analogous remark concerns Theorems \ref{thm:mainunw} and \ref{thm:lag_H}.

\begin{corollary} \label{cor:neg3}
Let $\alpha \in [-1\slash 2,\infty)^d$, $\sigma > 0$, $1\le p < \infty$, $1\le q < \infty$, and $(p,q)$
be such a pair that $\eqref{eq32}$ holds.
Then the operator $(\mathcal{L}_\alpha)^{-\sigma}$,
defined on $L^2(\R_+,d\mu_{\alpha})$ by means of the spectral theorem, extends to a bounded operator from 
$L^p(\R_+,d\mu_{\alpha})$ to $L^q(\R_+,d\mu_{\alpha})$. 
Moreover, denoting this extension by $(\mathcal{L}_\alpha)_{pq}^{-\sigma}$, 
for any $k\in\N$ we have
\begin{equation} \label{coeff3}
\langle(\mathcal{L}_\alpha)_{pq}^{-\sigma} f,
\ell^\alpha_k\rangle_{d\mu_{\alpha}}=(\lambda^\alpha_k)^{-\sigma}\langle f,
\ell^\alpha_k\rangle_{d\mu_{\alpha}}, 
\qquad f\in L^p(\R_+,d\mu_{\alpha}).
\end{equation}
\end{corollary}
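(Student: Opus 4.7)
The plan is to follow closely the strategy already used in Corollaries \ref{negative} and \ref{cor:neg2}. The crux is to identify the spectrally defined operator $(\mathcal{L}_\alpha)^{-\sigma}$ with the integral operator $\mathcal{I}^{\alpha,\sigma}$ on $L^2(\R_+,d\mu_\alpha)$; once this is done, Theorem \ref{thm:mainunw} provides the required bounded extension from $L^p(d\mu_\alpha)$ to $L^q(d\mu_\alpha)$ via the density of $L^2(d\mu_\alpha)\cap L^p(d\mu_\alpha)$ in $L^p(d\mu_\alpha)$ for $1\le p<\infty$, and formula \eqref{coeff3} drops out of \eqref{eq45} because $(\mathcal{L}_\alpha)^{-\sigma}_{pq}$ and $\mathcal{I}^{\alpha,\sigma}$ agree as bounded operators from $L^p(d\mu_\alpha)$ to $L^q(d\mu_\alpha)$.

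For the $L^2$-identity I would give either of two parallel arguments. The direct one mimics the proof of Corollary \ref{negative}: both operators are bounded on $L^2(d\mu_\alpha)$ (the spectral one is a contraction because $\lambda_k^\alpha \ge 1$ when $\alpha\in[-1/2,\infty)^d$, and $\mathcal{I}^{\alpha,\sigma}$ is bounded by Theorem \ref{thm:mainunw} with $p=q=2$), so it suffices to check that they coincide on the orthonormal basis $\{\ell_k^\alpha\}$, whose linear span is dense in $L^2(d\mu_\alpha)$. The verification
\begin{equation*}
\Gamma(\sigma)\mathcal{I}^{\alpha,\sigma}\ell_k^\alpha(x)
= \int_0^\infty t^{\sigma-1}\,e^{-t\mathcal{L}_\alpha}\ell_k^\alpha(x)\,dt
= \Gamma(\sigma)(\lambda_k^\alpha)^{-\sigma}\ell_k^\alpha(x)
\end{equation*}
rests on an application of Fubini's theorem, which in turn requires $\int_{\R_+}\mathcal{K}^{\alpha,\sigma}(x,y)|\ell_k^\alpha(y)|\,d\mu_\alpha(y)<\infty$ for each fixed $x\in\R_+$. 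Alternatively, one may conjugate through the isometric isomorphism $S_\alpha\colon L^2(d\mu_\alpha)\to L^2(\R_+)$: the intertwining $L_\alpha^H\circ S_\alpha=S_\alpha\circ L_\alpha$ passes, by spectral calculus, to $(\mathcal{L}_\alpha^H)^{-\sigma}\circ S_\alpha=S_\alpha\circ(\mathcal{L}_\alpha)^{-\sigma}$, while identity \eqref{33} yields $\mathcal{I}^{\alpha,\sigma}_H\circ S_\alpha=S_\alpha\circ\mathcal{I}^{\alpha,\sigma}$. Combining these with Corollary \ref{cor:neg2} immediately gives $(\mathcal{L}_\alpha)^{-\sigma}=\mathcal{I}^{\alpha,\sigma}$ on $L^2(d\mu_\alpha)$.

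The only step requiring more than a line of justification is the integrability condition used for Fubini in the first approach. This is handled by the kernel identity \eqref{22}: for fixed $x\in\R_+$,
\begin{equation*}
\int_{\R_+}\mathcal{K}^{\alpha,\sigma}(x,y)|\ell_k^\alpha(y)|\,d\mu_\alpha(y)
= \prod_{i=1}^d x_i^{-\alpha_i-1/2}\int_{\R_+}\mathcal{K}^{\alpha,\sigma}_H(x,y)|\varphi_k^\alpha(y)|\,dy,
\end{equation*}
and the right-hand integral is finite by the very facts invoked in the proof of Corollary \ref{cor:neg2}, namely $\mathcal{K}^{\alpha,\sigma}_H(x,\cdot)\in L^1(\R_+)$ and $\varphi_k^\alpha\in L^\infty(\R_+)$ (the latter being where the hypothesis $\alpha\in[-1/2,\infty)^d$ is used). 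I do not anticipate any real obstacle; the conjugation argument is clean and short, and the direct argument only requires this integrability check, so the work reduces to assembling already established ingredients in the correct order.
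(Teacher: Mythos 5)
Your proposal is correct and its primary argument is essentially the paper's own proof: both reduce to checking that $(\mathcal{L}_\alpha)^{-\sigma}=\mathcal{I}^{\alpha,\sigma}$ on the dense span of the $\ell_k^\alpha$ in $L^2(d\mu_\alpha)$, with the Fubini step justified by pulling the integrability back to the Hermite-type setting via \eqref{22} (the paper phrases the reduction through \eqref{dd_H} and Proposition \ref{pro:comp}, while you route it through $\varphi_k^\alpha\in L^\infty$ and $\mathcal{K}^{\alpha,\sigma}_H(x,\cdot)\in L^1$, which amounts to the same facts). Your alternative conjugation argument via $S_\alpha$ and Corollary \ref{cor:neg2} is also valid and arguably cleaner, but it is only a cosmetic variant since Corollary \ref{cor:neg2} itself rests on the identical Fubini computation.
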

\begin{proof}
We use Theorem \ref{thm:mainunw} and the arguments from the proof of Corollary \ref{negative}.
We check that $(\mathcal{L}_\alpha)^{-\sigma}= \mathcal{I}^{\alpha,\sigma}$
in $L^2(\R_+,d\mu_{\alpha} )$ by verifying that both operators,
being bounded on $L^2(\R_+,d\mu_{\alpha})$,  coincide  on the dense in $L^2(\R_+,d\mu_{\alpha})$
linear span of Laguerre functions $\ell_k^\alpha$. 
Note that to apply Fubini's theorem it is enough to know that
$\mathcal{K}^{\alpha,\sigma}(x,\cdot)\in L^1(\R_+,d\mu_{\alpha})$ for any fixed $x\in\R_+$, 
and that $\ell^\alpha_k\in L^\infty(\R_+)$. The latter fact is obvious.
To justify the first one observe that due to \eqref{22} and \eqref{dd_H} it suffices to show that
$$
\int_{\R_+}\mathcal{K}^{\sigma}(x,y)\prod_{i=1}^d y_i^{\alpha_i+1/2}\,dy < \infty,
$$
for any fixed $x\in\R$. This, however, easily follows by Proposition \ref{pro:comp}.

Considering \eqref{coeff3}, given $1\le p<\infty$, the subspace 
$L^2(\R_+,d\mu_{\alpha})\cap L^p(\R_+,d\mu_{\alpha})$ is dense in $ L^p(\R_+,d\mu_{\alpha})$, 
hence the extension $(\mathcal{L}_\alpha)_{pq}^{-\sigma}$ coincides with $\mathcal{I}^{\alpha,\sigma}$ 
as a bounded operator from 
$L^p(\R_+,d\mu_{\alpha})$ to $L^q(\R_+,d\mu_{\alpha})$ and therefore 
\eqref{coeff3} follows from \eqref{eq45}.
\end{proof}

Similarly as in the settings of Hermite expansions and Laguerre expansions of Hermite type, 
the following observation is in order. The assertion of Corollary \ref{cor:neg3} remains valid, 
at least in the case when $d=1$ and $\alpha\ge0$, 
and the sequence $(\lambda_k^{\alpha})^{-\sigma}$ in the definition of 
$(\mathcal{L}_\alpha)^{-\sigma}$ is replaced by $(k+1)^{-\sigma}$ 
(then accordingly $\lambda^\alpha_k$ in \eqref{coeff3} must be replaced by $k+1$)
or by any other sequence of similar smoothness. 
This time this is a consequence of a multiplier theorem for one-dimensional
Laguerre expansions of convolution type, see \cite[Theorem 1.1]{STr} (actually that theorem admits 
a weighted setting with power weights involved).

Theorem \ref{thm:main} extends the result of Gasper, Stempak and Trebels \cite[Theorem 1.1]{GST} 
in several directions. To make appropriate comments we first state an equivalent form of 
the theorem from \cite{GST}. In terms of one-dimensional $\{\ell^\alpha_k\}$-expansions it 
reads as follows. Let $\alpha\ge0$,  $1< p\le q<\infty$, $0<\sigma<\alpha+1$,
$a<\frac{2\alpha+2}{p'}$, $b<\frac{2\alpha+2}{q}$, $a+b\ge0$,
$\frac1q=\frac1p-\frac{2\sigma-a-b}{2\alpha+2}$. 
Then the operator $I_\sigma$ defined initially by the series
$$
I_\sigma f= \sum_{k=0}^\infty (k+1)^{-\sigma}\langle f,  \ell^\alpha_k\rangle_{d\mu_{\alpha}} \ell^\alpha_k
$$
on the space spanned by the $\ell^\alpha_k$, $k\ge0$
(hence, in fact, the series terminates), extends to a bounded operator from
$L^p(\mathbb{R}_+,x^{ap}d\mu_{\alpha})$ to $L^q(\mathbb{R}_+,x^{-bq}d\mu_{\alpha})$. 
Thus Theorem \ref{thm:main}, being first of all multi-dimensional, in dimension one releases the
restriction $\sigma<\alpha+1$, enlarges the range of $\alpha$ parameter from $[0,\infty)$ to
$[-1/2,\infty)$, and finally, shows that the constraint $\frac1q=\frac1p-\frac{2\sigma-a-b}{2\alpha+2}$ 
is unnecessary (the possibility of replacing  $\lambda^\alpha_k$ by  $k+1$, at least when $\alpha\ge0$,  
is possible due to the weighted multiplier theorem mentioned above).

\section{Convexity principle} \label{sec:aver}

We start with showing a convexity principle for the heat kernel 
$G_t^\alpha(x,y)$, the potential kernel $\mathcal{K}^{\alpha,\sigma}(x,y)$ and the potential operator
$\mathcal{I}^{\alpha,\sigma}$.

\begin{propo}\label{pro:CP}
Let $\beta,\gamma\in(-1,\infty)^d$, $\beta \neq \gamma$, $\sigma_1,\sigma_2>0$. Further, let
$$
\alpha=\lambda\beta+(1-\lambda)\gamma, \qquad \sigma=\lambda\sigma_1+(1-\lambda)\sigma_2,
$$ 
for some $\lambda\in(0,1)$. Then 
\begin{equation}\label{CW1}
G_t^\alpha(x,y)\simeq \big(G_t^{\beta}(x,y)\big)^\lambda \big(G_t^{\gamma}(x,y)\big)^{1-\lambda},
\qquad x,y\in \R_+,
\end{equation}
\begin{equation}\label{CW2}
\mathcal{K}^{\alpha,\sigma}(x,y) \lesssim \big(\mathcal{K}^{\beta,\sigma_1}(x,y)\big)
^\lambda\big(\mathcal{K}^{\gamma,\sigma_2}(x,y)\big)^{1-\lambda},
\end{equation}
Moreover, for  $f=f_1^\lambda f_2^{1-\lambda}$ with $f_1,f_2\ge0$ we have 
\begin{equation}\label{CW3}
\mathcal{I}^{\alpha,\sigma} f(x) \lesssim
\big(\mathcal{I}^{\beta,\sigma_1}f_1(x)\big)^\lambda\big(\mathcal{I}^{\gamma,\sigma_2}
f_2(x)\big)^{1-\lambda},  \qquad x\in \R_+.
\end{equation}
\end{propo}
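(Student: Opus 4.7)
The plan is to establish \eqref{CW1}, \eqref{CW2}, \eqref{CW3} in order, with each step following by a H\"older-type argument from the previous one, and the base case reducing to a pointwise comparison of modified Bessel functions.

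For \eqref{CW1}, I would combine the explicit formula for $G_t^{\alpha,H}(x,y)$ with the intertwining relation \eqref{11}. All factors independent of $\alpha$, namely $(\sinh 2t)^{-d}$ and the exponential in $\|x\|^2+\|y\|^2$, split as a product of $\lambda$-th and $(1-\lambda)$-th powers trivially, since $\lambda+(1-\lambda)=1$. The $\alpha$-dependent portion reduces to $\prod_i (x_iy_i)^{-\alpha_i} I_{\alpha_i}(x_iy_i/\sinh 2t)$, and since $\alpha_i = \lambda\beta_i + (1-\lambda)\gamma_i$ the power factors $(x_iy_i)^{-\alpha_i}$ factor exactly. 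Hence the claim reduces to the pointwise Bessel equivalence $I_{\alpha_i}(z) \simeq [I_{\beta_i}(z)]^\lambda [I_{\gamma_i}(z)]^{1-\lambda}$ for all $z>0$, which I would verify using the asymptotics \eqref{bes}: for $z \to 0^+$ both sides are comparable to $z^{\alpha_i}$, for $z\to\infty$ both behave like $z^{-1/2}e^z$ (the exponents of $e^z$ on the right sum to $1$), and the intermediate range is controlled by continuity and strict positivity of $\nu \mapsto I_\nu(z)$ on compact subsets of $(0,\infty)$.

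Given \eqref{CW1}, the inequality \eqref{CW2} would follow by writing $t^{\sigma-1} = [t^{\sigma_1-1}]^\lambda [t^{\sigma_2-1}]^{1-\lambda}$ (possible since $\sigma=\lambda\sigma_1+(1-\lambda)\sigma_2$), substituting into \eqref{ker2}, and applying H\"older's inequality with exponents $1/\lambda$ and $1/(1-\lambda)$ to the $t$-integral; the $\Gamma$-factors get absorbed into the implicit constant. For \eqref{CW3}, I would exploit the factorization $d\mu_\alpha(y) = [d\mu_\beta(y)]^\lambda [d\mu_\gamma(y)]^{1-\lambda}$ of densities, valid because $2\alpha_i+1 = \lambda(2\beta_i+1)+(1-\lambda)(2\gamma_i+1)$. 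Combined with \eqref{CW2} and $f = f_1^\lambda f_2^{1-\lambda}$, the integrand in $\mathcal{I}^{\alpha,\sigma}f(x)$ becomes a product of the form $[\mathcal{K}^{\beta,\sigma_1}(x,y)f_1(y)\,\rho_\beta(y)]^\lambda [\mathcal{K}^{\gamma,\sigma_2}(x,y)f_2(y)\,\rho_\gamma(y)]^{1-\lambda}$ (with $\rho_\beta,\rho_\gamma$ the densities of $\mu_\beta,\mu_\gamma$), and a final application of H\"older's inequality with the same exponents yields the conclusion.

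The main obstacle is the pointwise Bessel comparison underlying \eqref{CW1}: it is really a log-convexity-type feature of $\nu\mapsto I_\nu(z)$ that must be teased out from the two-regime asymptotics \eqref{bes} and a compactness argument on the bounded middle range. Once this ingredient is in hand, the two subsequent H\"older applications are essentially formal, and the whole proof is quite short.
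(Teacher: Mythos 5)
Your proposal is correct and follows essentially the same route as the paper: \eqref{CW1} from the explicit kernel formula, the asymptotics \eqref{bes}, and continuity/positivity of $\nu\mapsto I_\nu(z)$ on the intermediate range, then \eqref{CW2} and \eqref{CW3} by the splittings $t^{\sigma-1}=[t^{\sigma_1-1}]^\lambda[t^{\sigma_2-1}]^{1-\lambda}$ and $w_\alpha=w_\beta^\lambda w_\gamma^{1-\lambda}$ followed by H\"older's inequality. No gaps.
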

\begin{proof}
The first relation is a direct consequence of the explicit formula for $G_t^{\alpha}(x,y)$, 
the asymptotics \eqref{bes} and the continuity of the Bessel functions involved.
The estimate for potential kernels follows from \eqref{CW1} and H\"older's inequality:
\begin{align*}
\mathcal{K}^{\alpha,\sigma}(x,y) & \simeq \int_0^{\infty} 
	\big(G_t^{\beta}(x,y)\big)^\lambda \big(G_t^{\gamma}(x,y)\big)^{1-\lambda}
	t^{\lambda (\sigma_1-1)} t^{(1-\lambda)(\sigma_2-1)} \, dt \\
& \le \bigg( \int_0^{\infty} G_t^{\beta}(x,y) t^{\sigma_1-1}\, dt \bigg)^{\lambda}
	\bigg( \int_0^{\infty} G_t^{\gamma}(x,y) t^{\sigma_2-1}\, dt \bigg)^{1-\lambda} \\
& \simeq \big(\mathcal{K}^{\beta,\sigma_1}(x,y)\big)^\lambda
	\big(\mathcal{K}^{\gamma,\sigma_2}(x,y)\big)^{1-\lambda}.
\end{align*}
Finally, to justify \eqref{CW3} we first observe that 
$$
w_{\alpha}(x)=(w_{\beta}(x))^{\lambda} (w_{\gamma}(x))^{1-\lambda},
$$
where $w_{\alpha}$ denotes the density of the measure $\mu_{\alpha}$.
Then we use \eqref{CW2} and again H\"older's inequality to get
\begin{align*}
\mathcal{I}^{\alpha,\sigma} f(x) & = \int_{\R_+} \mathcal{K}^{\alpha,\sigma}(x,y) f(y) w_{\alpha}(y)\, dy\\
& \lesssim \int_{\R_+} \big(\mathcal{K}^{\beta,\sigma_1}(x,y)\big)^\lambda
	\big(\mathcal{K}^{\gamma,\sigma_2}(x,y)\big)^{1-\lambda} 
	\big(f_1(y)\big)^\lambda \big(f_2(y)\big)^{1-\lambda} 
	(w_{\beta}(y))^{\lambda} (w_{\gamma}(y))^{1-\lambda} \, dy \\
& \le \bigg( \int_{\R_+} \mathcal{K}^{\beta,\sigma_1}(x,y) f_1(y) w_{\beta}(y)\, dy\bigg)^{\lambda}
	\bigg( \int_{\R_+} \mathcal{K}^{\gamma,\sigma_2}(x,y) f_2(y) w_{\gamma}(y)\, dy\bigg)^{1-\lambda} \\
& = \big(\mathcal{I}^{\beta,\sigma_1}f_1(x)\big)^\lambda\big(\mathcal{I}^{\gamma,\sigma_2}
	f_2(x)\big)^{1-\lambda}.
\end{align*}
\end{proof}

We now state the convexity principle that concerns $L^p-L^q$ mapping properties of
the potential operators.

\begin{theor} \label{thm:avg}
Let $\beta,\gamma\in(-1,\infty)^d$, $\beta \neq \gamma$, $\sigma_1,\sigma_2>0$, 
and $1 \le p,q < \infty$. Further, let $U_1,U_2,V_1,V_2$ be strictly positive (up to a set of null measure)
weights on $\R_+$ and put
$$
\alpha=\lambda\beta+(1-\lambda)\gamma, \qquad \sigma=\lambda\sigma_1+(1-\lambda)\sigma_2,
\qquad
U = U_1^{\lambda} U_2^{1-\lambda}, \qquad V = V_1^{\lambda} V_2^{1-\lambda}
$$
for a fixed $\lambda \in (0,1)$. Then boundedness of the operators
$$
\mathcal{I}^{\beta,\sigma_1} \colon L^p(U_1d\mu_{\beta}) \longrightarrow L^{q}(V_1d\mu_{\beta}), \qquad
\mathcal{I}^{\gamma,\sigma_2} \colon L^p(U_2d\mu_{\gamma}) \longrightarrow L^{q}(V_2d\mu_{\gamma}),
$$
implies boundedness of the operator
$$
\mathcal{I}^{\alpha,\sigma} \colon L^p(Ud\mu_{\alpha}) \longrightarrow L^{q}(Vd\mu_{\alpha}).
$$
\end{theor}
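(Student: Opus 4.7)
The plan is to derive the weighted $L^p$--$L^q$ estimate for $\mathcal{I}^{\alpha,\sigma}$ from the two hypothesized boundedness statements by combining the pointwise bound \eqref{CW3} of Proposition \ref{pro:CP} with a suitable H\"older-type argument. Since $\mathcal{K}^{\alpha,\sigma}$ is a positive kernel, we may assume $f\ge 0$ throughout.

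The central step is to split $f$ as a product $f = f_1^{\lambda} f_2^{1-\lambda}$ in such a way that $f_1 \in L^p(U_1 d\mu_{\beta})$ and $f_2 \in L^p(U_2 d\mu_{\gamma})$, with both norms equal to $\|f\|_{L^p(U d\mu_{\alpha})}$. The natural candidate, which uses that all the weights and densities are strictly positive a.e., is
$$
R = \frac{U_1 w_{\beta}}{U_2 w_{\gamma}}, \qquad f_1 = f R^{(\lambda-1)/p}, \qquad f_2 = f R^{\lambda/p}.
$$
An immediate verification shows that $f_1^{\lambda} f_2^{1-\lambda} = f$, and invoking the identity $w_{\alpha} = w_{\beta}^{\lambda} w_{\gamma}^{1-\lambda}$ (already noted in the proof of Proposition \ref{pro:CP}) together with $U = U_1^{\lambda} U_2^{1-\lambda}$ one finds
$$
\|f_1\|_{L^p(U_1 d\mu_{\beta})}^p = \|f_2\|_{L^p(U_2 d\mu_{\gamma})}^p = \|f\|_{L^p(U d\mu_{\alpha})}^p,
$$
since $R^{\lambda-1} U_1 w_{\beta} = R^{\lambda} U_2 w_{\gamma} = (U_1 w_{\beta})^{\lambda} (U_2 w_{\gamma})^{1-\lambda} = U w_{\alpha}$.

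With the factorization in place, I would apply \eqref{CW3}, use the analogous splitting $V w_{\alpha} = (V_1 w_{\beta})^{\lambda}(V_2 w_{\gamma})^{1-\lambda}$, and invoke H\"older's inequality with conjugate exponents $1/\lambda$ and $1/(1-\lambda)$:
\begin{align*}
\int_{\R_+} \big(\mathcal{I}^{\alpha,\sigma} f\big)^q V\, d\mu_{\alpha}
& \lesssim \int_{\R_+} \big[(\mathcal{I}^{\beta,\sigma_1} f_1)^q V_1 w_{\beta}\big]^{\lambda}
\big[(\mathcal{I}^{\gamma,\sigma_2} f_2)^q V_2 w_{\gamma}\big]^{1-\lambda}\, dx \\
& \le \|\mathcal{I}^{\beta,\sigma_1} f_1\|_{L^q(V_1 d\mu_{\beta})}^{q\lambda}
\|\mathcal{I}^{\gamma,\sigma_2} f_2\|_{L^q(V_2 d\mu_{\gamma})}^{q(1-\lambda)}.
\end{align*}
Feeding in the two assumed boundedness statements and the norm identity above yields $\|\mathcal{I}^{\alpha,\sigma} f\|_{L^q(V d\mu_{\alpha})}^q \lesssim \|f\|_{L^p(U d\mu_{\alpha})}^{q\lambda+q(1-\lambda)}=\|f\|_{L^p(U d\mu_{\alpha})}^q$, as required.

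The conceptual heart of the argument is guessing the correct factorization $(f_1, f_2)$; once the exponents of $R$ are tuned so that both weighted $L^p$ norms match $\|f\|_{L^p(U d\mu_{\alpha})}$, the remainder consists of the pointwise bound from Proposition \ref{pro:CP} and two applications of H\"older's inequality. I do not foresee any further serious obstacle.
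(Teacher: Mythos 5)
Your proposal is correct and follows essentially the same route as the paper: the pointwise bound \eqref{CW3}, H\"older's inequality with exponents $1/\lambda$ and $1/(1-\lambda)$ against the factorization $Vw_{\alpha}=(V_1w_{\beta})^{\lambda}(V_2w_{\gamma})^{1-\lambda}$, and the splitting $f=f_1^{\lambda}f_2^{1-\lambda}$ normalized so that both weighted $L^p$ norms equal $\|f\|_{L^p(Ud\mu_{\alpha})}$. Your choice $f_i=fR^{\cdot}$ is in fact identical to the paper's $f_i=f\,(Uw_{\alpha}/(U_iw_{\cdot}))^{1/p}$, merely written in terms of the ratio $R$.
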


\begin{proof}
To obtain the norm estimates it is enough, by the positivity of potential kernels, to consider only
nonnegative functions $f$. By \eqref{CW3}, H\"older's inequality and the assumed boundedness 
of $\mathcal{I}^{\beta,\sigma_1}$ and $\mathcal{I}^{\gamma,\sigma_2}$ it follows that for 
$f = f_1^{\lambda} f_2^{1-\lambda}$
\begin{align*}
\|\mathcal{I}^{\alpha,\sigma}f\|^q_{L^q(Vd\mu_{\alpha})} = 
& \,\int_{\R_+} \big(\mathcal{I}^{\alpha,\sigma}f(x)\big)^q
	V(x) w_{\alpha}(x)\, dx \\
\lesssim  &\, \int_{\R_+} \big(\mathcal{I}^{\beta,\sigma_1} f_1(x)\big)^{\lambda q} 
	\big(V_1(x)w_{\beta}(x)\big)^{\lambda} \big(\mathcal{I}^{\gamma,\sigma_2} f_2(x)\big)^{(1-\lambda) q} 
	\big(V_2(x)w_{\gamma}(x)\big)^{1-\lambda}\, dx \\
\le & \,\bigg( \int_{\R_+} \big(\mathcal{I}^{\beta,\sigma_1} f_1(x)\big)^{q} V_1(x) w_{\beta}(x) \, dx 
	\bigg)^{\lambda} \\
& \,\quad\times \bigg( \int_{\R_+} \big(\mathcal{I}^{\gamma,\sigma_2} f_2(x)\big)^{q} 
	V_2(x) w_{\gamma}(x) \, dx \bigg)^{1-\lambda} \\
\lesssim & \,\bigg( \int_{\R_+} \big(f_1(x)\big)^p U_1(x) w_{\beta}(x) \, dx \bigg)^{\lambda q \slash p}
	\bigg( \int_{\R_+} \big(f_2(x)\big)^p U_2(x) w_{\gamma}(x) \, dx \bigg)^{(1-\lambda) q \slash p}.
\end{align*}
With the choice 
$$
f_1(x) = f(x) \bigg(\frac{U(x) w_{\alpha}(x)}{U_1(x)w_{\beta}(x)}\bigg)^{1\slash p}, \qquad
f_2(x) = f(x) \bigg(\frac{U(x) w_{\alpha}(x)}{U_2(x)w_{\gamma}(x)}\bigg)^{1\slash p},
$$
we get the desired estimate
$$
\|\mathcal{I}^{\alpha,\sigma}f\|_{L^q(Vd\mu_{\alpha})} \lesssim \|f\|_{L^p(Ud\mu_{\alpha})}.
$$
\end{proof}

The importance of the convexity principles comes from the fact that they allow to obtain
weighted mapping properties of the Laguerre potential operators from those for Hermite potential
operators, which are much easier to analyze. More precisely, we shall use a transference
method relating various objects and results between Hermite and Laguerre expansions to obtain
the relevant results in the Laguerre setting, but with the type multi-index restricted to a discrete
set of half-integer multi-indices. Then the convexity principles will enable to interpolate those
``half-integer results'' to cover general $\alpha$.

A detailed description of the transference between the settings of Hermite polynomial expansions and
Laguerre polynomial expansions of half-integer order can be found in \cite{GIT}.
In the present situation, a completely parallel transference method is valid for the Hermite function
setting and the Laguerre function setting of convolution type and half-integer order.
Let $n=(n_1,\ldots,n_d) \in (\mathbb{N}\backslash \{0\})^d$ be a multi-index and
let $y^i = (y_1^i,\ldots,y_{n_i}^i) \in \mathbb{R}^{n_i}$, $i=1,\ldots,d$.
Define the transformation $\phi \colon \mathbb{R}^{|n|} \to \R_{+}$ by
$$
\phi(y^1,\ldots,y^d) = (\|y^1\|,\ldots,\|y^d\|).
$$
Various objects in the Hermite setting in $\mathbb{R}^{|n|}$ and in the 
Laguerre setting in $\R_+$ with the type multi-index $\alpha$ such that 
\begin{equation} \label{half}
\alpha_i=\frac{n_i}{2} -1, \qquad i=1,\ldots,d,
\end{equation}
are connected by means of $\phi$. In particular, we have the following.

\begin{propo} \label{GIT:11}
Let $n$ and $\alpha$ be as above, and $t,\sigma>0$ be fixed. 
Then for any $f$ in the space spanned by the $\ell_k^{\alpha}$ we have
\begin{align*}
(e^{-t\mathcal{L}_{\alpha}}f) \circ \phi(\bar{x}) & = e^{-t\mathcal{H}}(f \circ \phi)(\bar{x}), \qquad 
	\bar{x} \in \mathbb{R}^{|n|},\\
((\mathcal{L}_{\alpha})^{-\sigma}f) \circ \phi(\bar{x}) & 
= \mathcal{H}^{-\sigma}(f \circ \phi)(\bar{x}), \qquad \bar{x} \in \mathbb{R}^{|n|},
\end{align*}
where $\mathcal{L}_{\alpha}$ is the Laguerre operator in $\R_{+}$, and $\mathcal{H}$ 
is the Hermite operator in $\mathbb{R}^{|n|}$.
\end{propo}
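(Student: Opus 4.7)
The plan is to reduce everything to the basis by linearity: both $e^{-t\mathcal{L}_\alpha}$ and $(\mathcal{L}_\alpha)^{-\sigma}$ are defined spectrally, so it suffices to verify the two intertwining identities when $f = \ell_k^\alpha$ for a single multi-index $k \in \N$, and then extend by linearity to the span. Moreover, once the heat-semigroup identity is established, the identity for $(\mathcal{L}_\alpha)^{-\sigma}$ is automatic, either by the subordination formula and Fubini, or directly from the fact that eigenvalues match and both operators act as scalar multiplication on the relevant eigenfunction.

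The central algebraic observation I would isolate first is that, under the half-integer condition \eqref{half}, the Laguerre differential expression
$$
L_\alpha = -\Delta + \|x\|^2 - \sum_{i=1}^d \frac{2\alpha_i+1}{x_i}\frac{\partial}{\partial x_i}
$$
acting in $\R_+$ coincides blockwise with the radial part of the Hermite operator $\mathcal{H} = -\Delta + \|y\|^2$ on $\mathbb{R}^{|n|}$. Concretely, for a smooth radial function $u(y^i) = F(\|y^i\|)$ on $\mathbb{R}^{n_i}$ one has the standard polar computation
$$
(-\Delta + \|y^i\|^2) F(\|y^i\|) = \Bigl(-F''(r) - \frac{n_i-1}{r} F'(r) + r^2 F(r)\Bigr)\Big|_{r = \|y^i\|},
$$
and the inner differential expression is exactly the one-dimensional $L_{\alpha_i}$ precisely when $2\alpha_i + 1 = n_i - 1$, i.e. $\alpha_i = n_i/2 - 1$. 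Because $\mathcal{H}$ splits as a sum over the $d$ coordinate blocks and $f\circ\phi$ is a tensor product of block-radial functions, applying $\mathcal{H}$ block by block gives the pointwise identity
$$
\mathcal{H}(f\circ \phi)(y) = (L_\alpha f)(\phi(y)), \qquad y\in \mathbb{R}^{|n|},
$$
for every $f$ in the span of the $\ell_k^\alpha$.

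Taking $f = \ell_k^\alpha$, which satisfies $L_\alpha \ell_k^\alpha = \lambda_k^\alpha \ell_k^\alpha$, this yields $\mathcal{H}(\ell_k^\alpha \circ \phi) = \lambda_k^\alpha (\ell_k^\alpha \circ \phi)$. Now $\ell_k^\alpha \circ \phi(y)$ is, up to a constant, a product of polynomials in the $\|y^i\|^2$ times $e^{-\|y\|^2/2}$, hence a smooth element of $L^2(\mathbb{R}^{|n|})$ in the Schwartz class; in particular it is a genuine $L^2$-eigenfunction of the self-adjoint operator $\mathcal{H}$ with eigenvalue $\lambda_k^\alpha = 4|k| + |n|$, matching the Hermite eigenvalue $2|m| + |n|$ at $|m| = 2|k|$. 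By the spectral theorem in $L^2(\mathbb{R}^{|n|})$,
$$
e^{-t\mathcal{H}}(\ell_k^\alpha \circ \phi) = e^{-t\lambda_k^\alpha}(\ell_k^\alpha \circ \phi) = \bigl(e^{-t\mathcal{L}_\alpha}\ell_k^\alpha\bigr)\circ \phi,
$$
and analogously $\mathcal{H}^{-\sigma}(\ell_k^\alpha\circ\phi) = (\lambda_k^\alpha)^{-\sigma}(\ell_k^\alpha\circ\phi) = ((\mathcal{L}_\alpha)^{-\sigma}\ell_k^\alpha)\circ \phi$. Linearity finishes the proof.

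The one step requiring real care is not a deep obstacle but a bookkeeping one: checking rigorously that the $d$-fold radial reduction of $\mathcal{H}$ on tensor-products of block-radial functions produces exactly $L_\alpha$ with $\alpha_i = n_i/2 - 1$, with the correct matching of normalizing constants so that $\ell_k^\alpha \circ \phi$ really lands on a Hermite eigenfunction (and not merely proportional to one on a subspace). Once the polar-coordinate identity is set up block by block, however, everything reduces to the one-dimensional verification described above.
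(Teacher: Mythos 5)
Your proof is correct, and its skeleton (reduce to $f=\ell_k^\alpha$ by linearity, identify $\ell_k^\alpha\circ\phi$ as lying in a single eigenspace of the self-adjoint $\mathcal{H}$ with eigenvalue $\lambda_k^\alpha$, then apply the spectral theorem) is the same as the paper's. The difference is in how the key fact is justified: the paper's proof is essentially a citation, invoking the identity $\ell_k^{\alpha}\circ\phi(\bar x)=\sum a_r h_{2r}(\bar x)$ (sum over $r=(r^1,\ldots,r^d)$ with $|r^i|=k_i$) from Guti\'errez--Incognito--Torrea, which immediately places $\ell_k^\alpha\circ\phi$ in the Hermite eigenspace for $2|2r|+|n|=4|k|+|n|=\lambda_k^\alpha$. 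You instead derive this from scratch via the polar-coordinate reduction of $-\Delta+\|y^i\|^2$ on block-radial functions, which is exactly where the half-integer condition $\alpha_i=n_i/2-1$ of \eqref{half} comes from; your route is self-contained and more illuminating about why the transference works, while the cited identity is sharper (it pins down precisely which Hermite functions occur), though that extra precision is not needed here. The step you flag as bookkeeping is indeed the only delicate point: promoting the pointwise differential eigenvalue equation to an eigenvalue equation for the self-adjoint extension. Your observation that $\ell_k^\alpha\circ\phi$ is a polynomial times a Gaussian --- hence a finite linear combination of Hermite functions --- settles it, since expanding in the Hermite basis and comparing coefficients forces every component outside the $\lambda_k^\alpha$-eigenspace to vanish.
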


\begin{proof}
It suffices to take $f = \ell_k^{\alpha}$ and use the identity
$$
\ell_k^{\alpha} \circ \phi(\bar{x}) = \sum a_r h_{2r}(\bar{x}), \qquad \bar{x} \in \mathbb{R}^{|n|},
$$
where the summation runs over all $r=(r^1,\ldots,r^d)\in \mathbb{N}^{|n|}$ such that 
$|r^i|=k_i$, $i=1,\ldots,d$, see \cite[Lemma 1.1, Proposition 3.1]{GIT}.
\end{proof}

Since $(\mathcal{L}_{\alpha})^{-\sigma} \ell_k^{\alpha} = \mathcal{I}^{\alpha,\sigma} \ell_k^{\alpha}$
and $\mathcal{H}^{-\sigma}h_r = \mathcal{I}^{\sigma}h_r$, see the proofs of Corollaries \ref{cor:neg3}
and \ref{negative}, we get

\begin{corollary} \label{prop:rel}
Assume that $n\in (\mathbb{N}\backslash \{0\})^d$ is related to $\alpha$ as in \eqref{half}.
Given $\sigma>0$, let $\mathcal{I}^{\alpha,\sigma}$ be the Laguerre potential operator in $\R_+$, and
$\mathcal{I}^{\sigma}$ be the Hermite potential operator in $\mathbb{R}^{|n|}$. Then 
$$
(\mathcal{I}^{\alpha,\sigma} f) \circ \phi(\bar{x}) = \mathcal{I}^{\sigma}(f \circ \phi)(\bar{x}),
	\qquad \bar{x} \in \mathbb{R}^{|n|},
$$
for any $f$ in the space spanned by the $\ell_k^{\alpha}$. 
\end{corollary}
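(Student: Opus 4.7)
The plan is to reduce to the case of a single basis function $f = \ell_k^\alpha$ by linearity (both sides of the asserted identity are linear in $f$, and the statement is restricted to the finite-dimensional span of the $\ell_k^\alpha$, so no convergence issue intervenes).

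Once $f = \ell_k^\alpha$ is fixed, I would chain three facts. First, by Corollary \ref{cor:neg3} (more precisely the identity $(\mathcal{L}_\alpha)^{-\sigma}\ell_k^\alpha = \mathcal{I}^{\alpha,\sigma}\ell_k^\alpha$ that is verified inside its proof), I rewrite the left-hand side as $((\mathcal{L}_\alpha)^{-\sigma}\ell_k^\alpha)\circ\phi$. Second, I apply Proposition \ref{GIT:11} to turn this into $\mathcal{H}^{-\sigma}(\ell_k^\alpha\circ\phi)$. Third, using the expansion $\ell_k^\alpha\circ\phi(\bar x) = \sum_r a_r h_{2r}(\bar x)$ from \cite[Lemma 1.1, Proposition 3.1]{GIT} cited in the proof of Proposition \ref{GIT:11} (a finite linear combination of Hermite functions indexed by those $r=(r^1,\ldots,r^d)\in\mathbb N^{|n|}$ with $|r^i|=k_i$), together with the identity $\mathcal{H}^{-\sigma}h_{2r} = \mathcal{I}^\sigma h_{2r}$ established in the proof of Corollary \ref{negative}, I obtain
\begin{equation*}
\mathcal{H}^{-\sigma}(\ell_k^\alpha\circ\phi)(\bar x) = \sum_r a_r\,\mathcal{I}^\sigma h_{2r}(\bar x) = \mathcal{I}^\sigma\Big(\sum_r a_r h_{2r}\Big)(\bar x) = \mathcal{I}^\sigma(\ell_k^\alpha\circ\phi)(\bar x),
\end{equation*}
which is the required identity for $f=\ell_k^\alpha$.

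No genuine obstacle is expected: each of the three ingredients has been set up earlier, and the finite-sum representation of $\ell_k^\alpha\circ\phi$ removes any potential issue about applying $\mathcal{H}^{-\sigma}$ termwise. The only point where I would take mild care is in justifying the interchange between the operator $\mathcal{I}^\sigma$ and the finite sum, but since $\mathcal{I}^\sigma$ is a genuine integral operator with absolutely convergent integral on Hermite functions (which are Schwartz-class and bounded), linearity applies pointwise without any limiting argument.
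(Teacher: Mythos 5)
Your proposal is correct and follows essentially the same route as the paper: the paper's own (very brief) justification is precisely the chain $(\mathcal{I}^{\alpha,\sigma}\ell_k^{\alpha})\circ\phi = ((\mathcal{L}_{\alpha})^{-\sigma}\ell_k^{\alpha})\circ\phi = \mathcal{H}^{-\sigma}(\ell_k^{\alpha}\circ\phi) = \mathcal{I}^{\sigma}(\ell_k^{\alpha}\circ\phi)$, invoking the identities $(\mathcal{L}_{\alpha})^{-\sigma}\ell_k^{\alpha}=\mathcal{I}^{\alpha,\sigma}\ell_k^{\alpha}$ and $\mathcal{H}^{-\sigma}h_r=\mathcal{I}^{\sigma}h_r$ from the proofs of Corollaries \ref{cor:neg3} and \ref{negative} together with Proposition \ref{GIT:11}, exactly as you do. Your additional remarks on linearity over the finite sum $\sum_r a_r h_{2r}$ and on the absolute convergence of the defining integrals are correct and merely make explicit what the paper leaves implicit.
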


The relations of Proposition \ref{GIT:11} and Corollary \ref{prop:rel} hold in fact for more general
functions $f$. This can be seen from connections between heat and potential kernels in both settings.

\begin{propo} \label{heat_rel}
Assume that $n$ and $\alpha$ are related by \eqref{half}. Let 
$G_t^{\alpha}(x,y)$, $\mathcal{K}^{\alpha,\sigma}(x,y)$ be the Laguerre heat and potential kernels in 
$\R_{+}$, and $G_t(\bar{x},\bar{y})$, $\mathcal{K}^{\sigma}(\bar{x},\bar{y})$ the Hermite heat and
potential kernels in $\mathbb{R}^{|n|}$.
Then, for all $\bar{x} \in \mathbb{R}^{|n|}$, $y \in \R_{+}$,
$$
G_t^{\alpha}(\phi(\bar{x}),y) = \int_{S_{n_1-1}} \!\!\!\!\!\!\cdots \int_{S_{n_d-1}} G_t\Big(\bar{x},
	\big(y_1 \xi^1,\ldots,y_d \xi^d\big)\Big) \, d\sigma_1(\xi^1)\ldots d\sigma_d(\xi^d),
$$
hence also
$$
\mathcal{K}^{\alpha,\sigma}
(\phi(\bar{x}),y) = \int_{S_{n_1-1}} \!\!\!\!\!\!\cdots \int_{S_{n_d-1}} \mathcal{K}^{\sigma}\Big(\bar{x},
	\big(y_1 \xi^1,\ldots,y_d \xi^d\big)\Big) \, d\sigma_1(\xi^1)\ldots d\sigma_d(\xi^d),
$$
where $S_{n_i-1}$ is the unit sphere in $\mathbb{R}^{n_i}$ and $\sigma_i$ is the surface measure 
on $S_{n_i-1}$. 
\end{propo}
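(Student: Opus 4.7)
The plan is to establish the identity for heat kernels first, and then to read off the identity for potential kernels immediately from the definitions \eqref{ker2} and \eqref{ker} by multiplying both sides by $t^{\sigma-1}/\Gamma(\sigma)$, integrating in $t>0$, and applying Tonelli's theorem (everything in sight is nonnegative). Thus the whole content of the proposition reduces to the heat-kernel identity.

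To prove the latter, I would start from the explicit formula for $G_t(\bar x,\bar y)$ recalled in Section~\ref{sec:herm} and rewrite its symmetric form via the trigonometric identities $\tanh t + \coth t = 2\coth 2t$ and $\coth t - \tanh t = 2/\sinh 2t$, obtaining
\begin{equation*}
G_t(\bar x,\bar y) = (2\pi\sinh 2t)^{-|n|/2}\exp\Bigl(-\tfrac12\coth(2t)\bigl(\|\bar x\|^2+\|\bar y\|^2\bigr)\Bigr)\exp\Bigl(\tfrac{\bar x\cdot\bar y}{\sinh 2t}\Bigr).
\end{equation*}
Splitting $\bar x = (\bar x^1,\ldots,\bar x^d)$ with $\bar x^i \in \mathbb{R}^{n_i}$, and evaluating at $\bar y = (y_1\xi^1,\ldots,y_d\xi^d)$ with $\xi^i \in S_{n_i-1}$, one has $\|\bar y\|^2 = \|y\|^2$, so the only dependence on the angular variables sits in the last exponential, which factors as $\prod_{i=1}^d \exp(y_i\langle \bar x^i,\xi^i\rangle/\sinh 2t)$.

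The main computational step is then the Poisson-type identity
\begin{equation*}
\int_{S_{m-1}} e^{\langle a,\xi\rangle}\, d\sigma(\xi) = (2\pi)^{m/2}\|a\|^{1-m/2} I_{m/2-1}(\|a\|), \qquad a\in\mathbb{R}^m,
\end{equation*}
a classical integral representation of the modified Bessel function. I would apply it on each sphere $S_{n_i-1}$ with $a = y_i\bar x^i/\sinh 2t$; since $\phi(\bar x)=x$ means $\|\bar x^i\| = x_i$, the norm $\|a\|$ equals $x_iy_i/\sinh 2t$, and the relation $\alpha_i = n_i/2 - 1$ from \eqref{half} ensures that the resulting Bessel functions are exactly the $I_{\alpha_i}$ appearing in the explicit formula for $G_t^{\alpha,H}(x,y)$.

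All that is left is bookkeeping of constants. Using $|n|/2 = |\alpha|+d$, the factor $(2\pi\sinh 2t)^{-|n|/2}\prod_i (2\pi)^{n_i/2}$ collapses to $(\sinh 2t)^{-|\alpha|-d}$, and extracting $\sinh(2t)^{|\alpha|}$ from $\prod_i(x_iy_i/\sinh 2t)^{-\alpha_i}$ brings the total power of $\sinh 2t$ to $-d$. Comparing with the explicit formula for $G_t^{\alpha,H}(x,y)$ and invoking \eqref{11}, one recovers precisely $G_t^\alpha(\phi(\bar x),y)$, as required. The only genuine obstacle is the careful tracking of these powers and constants; the rest of the argument is mechanical.
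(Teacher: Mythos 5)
Your proof is correct, but it takes a genuinely different route from the paper's. The paper deduces the heat-kernel identity \emph{softly}: it starts from the operator-level transference identity $(e^{-t\mathcal{L}_{\alpha}}f)\circ\phi = e^{-t\mathcal{H}}(f\circ\phi)$ of Proposition \ref{GIT:11} (which in turn rests on the expansion of $\ell_k^{\alpha}\circ\phi$ into even Hermite functions from \cite{GIT}), integrates $e^{-t\mathcal{H}}(f\circ\phi)$ in poly-polar coordinates, and then identifies the two resulting integral representations against the dense span of the $\ell_k^{\alpha}$, concluding by continuity of the kernels. You instead compute the spherical average of $G_t(\bar x,\bar y)$ directly from the closed formula, and your computation checks out: the rewriting of the Mehler kernel via $\tanh t+\coth t = 2\coth 2t$ and $\coth t - \tanh t = 2\slash\sinh 2t$ is right, the Poisson--Bessel formula $\int_{S_{m-1}} e^{\langle a,\xi\rangle}\,d\sigma(\xi) = (2\pi)^{m\slash 2}\|a\|^{1-m\slash 2}I_{m\slash 2-1}(\|a\|)$ is the correct classical identity (note $1-n_i\slash 2=-\alpha_i$, so the prefactor is exactly $(x_iy_i\slash\sinh 2t)^{-\alpha_i}$), and the constants collapse as you say to the formula for $G_t^{\alpha}$ recorded in Section \ref{sec:dunkl}. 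The passage to potential kernels by Tonelli is exactly what the paper does. What your approach buys is self-containment and an explicit identity that does not presuppose Proposition \ref{GIT:11} — indeed it could be used to reprove that proposition; what the paper's approach buys is brevity and independence from the exact closed form of the kernels. One small point worth making explicit in your write-up: when some block $\bar x^i=0$ the Bessel formula should be read through the entire function $z\mapsto z^{-\nu}I_{\nu}(z)$ (both sides then equal the surface measure of the sphere), which is consistent with the convention the paper adopts for critical points in Section \ref{sec:dunkl}.
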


\begin{proof}
Let $f$ be a linear combination of the $\ell_k^{\alpha}$. Integrating in poly-polar coordinates in 
$\mathbb{R}^{|n|}$ associated to the factorization 
$\mathbb{R}^{|n|}=\mathbb{R}^{n_1}\times \ldots \times \mathbb{R}^{n_d}$
we get, for each $\bar{x} \in \mathbb{R}^{|n|}$,
\begin{align*}
e^{-t\mathcal{H}}(f\circ \phi)(\bar{x}) & 
= \int_{\mathbb{R}^{|n|}} G_t(\bar{x},\bar{y}) f\circ \phi(\bar{y})\, d\bar{y}\\
& = \int_{\R_{+}} \bigg[\int_{S_{n_1-1}} \!\!\!\!\!\!\cdots \int_{S_{n_d-1}} \!\!\!G_t\Big(\bar{x},
	\big(y_1 \xi^1,\ldots,y_d \xi^d\big)\Big) \, d\sigma_1(\xi^1)\ldots d\sigma_d(\xi^d) \bigg]
	f(y) \, d\mu_{\alpha}(y).
\end{align*}
On the other hand, by Proposition \ref{GIT:11} the last expression is equal to
$$
(e^{-t\mathcal{L}_{\alpha}} f) \circ \phi(\bar{x}) = 
\int_{\R_{+}} G_t^{\alpha}(\phi(\bar{x}),y) f(y) \, d\mu_{\alpha}(y).
$$
Thus the desired identity for the heat kernels follows by their continuity and the density in
$L^2(d\mu_{\alpha})$ of the subspace spanned by the $\ell_k^{\alpha}$.
The identity for the potential kernels is an easy consequence of the previous one.
\end{proof}

\begin{corollary} \label{cor:Dom}
Let $\alpha,n,\mathcal{I}^{\sigma},\mathcal{I}^{\alpha,\sigma}$ be as in Corollary \ref{prop:rel}
and let $f$ be a function in $\R_+$.
If $f \circ \phi \in \domain\mathcal{I}^{\sigma}$, then
$f \in \domain\mathcal{I}^{\alpha,\sigma}$ and for almost all $\bar{x} \in \mathbb{R}^{|n|}$
$$
(\mathcal{I}^{\alpha,\sigma}f)\circ \phi(\bar{x}) = \mathcal{I}^{\sigma}(f \circ \phi)(\bar{x}).
$$
\end{corollary}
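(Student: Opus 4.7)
The plan is to derive the result by combining Proposition~\ref{heat_rel} with Tonelli/Fubini and the poly-polar change of variables already used in the proof of that proposition. The two key ingredients are the integral representation of $\mathcal{K}^{\alpha,\sigma}(\phi(\bar{x}),y)$ supplied by Proposition~\ref{heat_rel}, and the identity $d\mu_{\alpha}(y) = \prod_i y_i^{n_i-1}dy$, which follows from the relation $2\alpha_i+1 = n_i-1$ implied by \eqref{half}.

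First I would establish the identity for $|f|$ in place of $f$. Writing
$$
\mathcal{I}^{\sigma}(|f\circ\phi|)(\bar{x}) = \int_{\mathbb{R}^{|n|}} \mathcal{K}^{\sigma}(\bar{x},\bar{y})\,|f(\phi(\bar{y}))|\,d\bar{y}
$$
and decomposing $\mathbb{R}^{|n|} = \mathbb{R}^{n_1}\times\cdots\times\mathbb{R}^{n_d}$, I pass to polar coordinates in each factor. Since $|f\circ\phi|(\bar{y})$ depends only on the moduli $\|y^i\|$, the spherical integrations collapse only onto $\mathcal{K}^{\sigma}$ and reproduce $\mathcal{K}^{\alpha,\sigma}(\phi(\bar{x}),y)$ by Proposition~\ref{heat_rel}, so that
$$
\mathcal{I}^{\sigma}(|f\circ\phi|)(\bar{x}) = \int_{\R_+}\mathcal{K}^{\alpha,\sigma}(\phi(\bar{x}),y)\,|f(y)|\,d\mu_{\alpha}(y) = \mathcal{I}^{\alpha,\sigma}|f|(\phi(\bar{x})),
$$
all interchanges of integration being justified by Tonelli's theorem. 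The assumption $f\circ\phi\in\domain\mathcal{I}^{\sigma}$ means the left-hand side is finite for a.e.\ $\bar{x}\in\mathbb{R}^{|n|}$, hence $\mathcal{I}^{\alpha,\sigma}|f|(\phi(\bar{x}))<\infty$ for a.e.\ $\bar{x}$.

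To deduce membership in $\domain\mathcal{I}^{\alpha,\sigma}$, I would set $E=\{x\in\R_+:\mathcal{I}^{\alpha,\sigma}|f|(x)=\infty\}$ and use polar integration in each factor once more to obtain
$$
\big|\phi^{-1}(E)\big| = \prod_{i=1}^d |S_{n_i-1}|\cdot \mu_{\alpha}(E).
$$
Since $|\phi^{-1}(E)|=0$, this forces $\mu_{\alpha}(E)=0$, and hence $|E|=0$, because the density of $\mu_{\alpha}$ is strictly positive almost everywhere on $\R_+$. The identity for $f$ in place of $|f|$ then follows by repeating the same chain of equalities, the exchange of order of integration now being a Fubini step justified by the absolute convergence just established. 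The main obstacle I anticipate is purely bookkeeping: keeping track of the normalizations between Lebesgue measure on $\mathbb{R}^{|n|}$, the Laguerre measure $\mu_{\alpha}$, and the surface measures $\sigma_i$, and cleanly transferring ``a.e.\ $\bar{x}$'' through $\phi$ to ``a.e.\ $x\in\R_+$''; no substantive analytic difficulty beyond that of Proposition~\ref{heat_rel} is involved.
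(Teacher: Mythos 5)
Your proposal is correct and follows essentially the same route as the paper: iterate the (absolutely) convergent integral defining $\mathcal{I}^{\sigma}(f\circ\phi)(\bar{x})$ in the poly-polar coordinates coming from $\mathbb{R}^{|n|}=\mathbb{R}^{n_1}\times\cdots\times\mathbb{R}^{n_d}$ and plug in the kernel identity of Proposition~\ref{heat_rel}. The paper's two-sentence proof leaves implicit exactly the details you supply — the Tonelli step for $|f|$ and the transfer of the null exceptional set through $\phi$ via the identity $|\phi^{-1}(E)|=\prod_i\sigma_i(S_{n_i-1})\,\mu_{\alpha}(E)$ — so your write-up is a faithful, more explicit version of the intended argument.
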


\begin{proof}
Assuming that $f \circ \phi$ is in the domain of $\mathcal{I}^{\sigma}$ and $\bar{x}$ is not
in an exceptional set of null measure, we iterate the convergent integral
defining $\mathcal{I}^{\sigma}(f \circ \phi)(\bar{x})$ in the poly-polar
coordinates emerging from the factorization 
$\mathbb{R}^{|n|}=\mathbb{R}^{n_1}\times \ldots \times \mathbb{R}^{n_d}$.
Then the pointwise connection from Proposition \ref{heat_rel} between the potential kernels
can be plugged in, and this leads precisely to the desired conclusions.
\end{proof}

An important feature of the transference method is that norm estimates in the two settings in
question are also related. The lemma below makes this precise (compare with \cite[Lemma 2.2]{GIT}).

\begin{lemma} \label{lem:trans}
Let $n$ and $\alpha$ be as in \eqref{half}. 
Assume that $U,V$ are nonnegative weights in $\R_+$, $1\le p,q < \infty$,
and $f$ is a fixed function in $L^p(Ud\mu_{\alpha})$. Suppose that $T,\widetilde{T}$ are operators
defined on $L^p(Ud\mu_{\alpha})$ and $L^p(\mathbb{R}^{|n|},U\circ \phi)$, respectively, satisfying
$(Tf)\circ \phi = \widetilde{T}(f \circ \phi)$. If
$$
\|\widetilde{T}(f\circ\phi)\|_{L^q(\mathbb{R}^{|n|},V\circ\phi)} \le 
	C \|f\circ\phi\|_{L^p(\mathbb{R}^{|n|},U\circ\phi)},
$$
then also
$$
\|{T}f\|_{L^q(Vd\mu_{\alpha})} \le C (\mathcal{C}_{dn})^{1\slash p - 1\slash q}\|f\|_{L^p(Ud\mu_{\alpha})}
$$
with the same constant $C$ and the constant $\mathcal{C}_{dn} = \prod_{i=1}^d \sigma_i(S_{n_i-1})$. 
\end{lemma}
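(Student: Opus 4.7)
The plan is to reduce everything to a single change of variables in poly-polar coordinates on $\mathbb{R}^{|n|} = \mathbb{R}^{n_1}\times \ldots \times \mathbb{R}^{n_d}$. Since $\phi(\bar{x})=(\|y^1\|,\ldots,\|y^d\|)$ only depends on the radial parts, for any nonnegative $F$ on $\R_+$ integrating in polar coordinates in each factor $\mathbb{R}^{n_i}$ gives
$$
\int_{\mathbb{R}^{|n|}} F\circ\phi(\bar{x})\,d\bar{x} = \Bigl(\prod_{i=1}^d \sigma_i(S_{n_i-1})\Bigr) \int_{\R_+} F(y) \prod_{i=1}^d y_i^{n_i-1}\,dy.
$$
The key observation, exploiting the half-integer relation $\alpha_i = n_i/2-1$, is that $n_i-1 = 2\alpha_i+1$, so the weight appearing above is exactly the density of $\mu_{\alpha}$. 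Therefore the identity simplifies to
$$
\int_{\mathbb{R}^{|n|}} F\circ\phi(\bar{x})\,d\bar{x} = \mathcal{C}_{dn} \int_{\R_+} F(y)\,d\mu_{\alpha}(y).
$$

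Next I would apply this identity with the two specific choices $F(y) = |f(y)|^p U(y)$ and $F(y)=|Tf(y)|^q V(y)$, both of which are nonnegative and depend on $y\in\R_+$ only. The first choice yields
$$
\|f\circ\phi\|_{L^p(\mathbb{R}^{|n|},U\circ\phi)}^p = \mathcal{C}_{dn}\,\|f\|_{L^p(Ud\mu_{\alpha})}^p,
$$
and the second, combined with the intertwining relation $(Tf)\circ\phi = \widetilde{T}(f\circ\phi)$, gives
$$
\|\widetilde{T}(f\circ\phi)\|_{L^q(\mathbb{R}^{|n|},V\circ\phi)}^q = \int_{\mathbb{R}^{|n|}} |(Tf)\circ\phi(\bar{x})|^q V\circ\phi(\bar{x})\,d\bar{x} = \mathcal{C}_{dn}\,\|Tf\|_{L^q(Vd\mu_{\alpha})}^q.
$$

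Plugging these two norm identities into the hypothesis $\|\widetilde{T}(f\circ\phi)\|_{L^q(V\circ\phi)} \le C\|f\circ\phi\|_{L^p(U\circ\phi)}$ and solving for $\|Tf\|_{L^q(Vd\mu_\alpha)}$ gives exactly the asserted bound with constant $C\,\mathcal{C}_{dn}^{1/p-1/q}$. There is essentially no obstacle here beyond keeping track of the exponents; the whole argument is just the poly-polar coordinate formula plus the simple algebraic identification $2\alpha_i+1 = n_i-1$. The only point that needs a line of justification is measurability/positivity so that the unsigned versions of the integrals above are literally equal (not merely equal up to sign), but this is immediate since $|f|^pU$ and $|Tf|^qV$ are nonnegative and well defined $\mu_\alpha$-a.e. on $\R_+$.
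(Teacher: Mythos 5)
Your proof is correct and is exactly the intended argument: the paper omits the proof, referring instead to \cite[Lemma 2.2]{GIT}, and that argument is precisely the poly-polar change of variables combined with the identification $2\alpha_i+1=n_i-1$ that you carry out. The bookkeeping of the constants $\mathcal{C}_{dn}^{1/p}$ and $\mathcal{C}_{dn}^{1/q}$ is right and yields the stated factor $\mathcal{C}_{dn}^{1/p-1/q}$.
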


Corollary \ref{cor:Dom} and Lemma \ref{lem:trans} imply the following.

\begin{corollary} \label{cor:trans}
Fix $\sigma>0$. Let $n$ and $\alpha$ satisfy \eqref{half}, and $U,V$ be nonnegative weights in $\R_+$.
If the Hermite potential operator
$$
\mathcal{I}^{\sigma} \colon L^p(\mathbb{R}^{|n|},U\circ \phi) \longrightarrow 
	L^q(\mathbb{R}^{|n|},V\circ \phi)
$$
is bounded, then also bounded is the Laguerre potential operator
$$
\mathcal{I}^{\alpha,\sigma} \colon L^p(Ud\mu_{\alpha}) \longrightarrow L^q(Vd\mu_{\alpha}).
$$
\end{corollary}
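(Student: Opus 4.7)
The plan is to assemble Corollary~\ref{cor:Dom} and Lemma~\ref{lem:trans} to get the conclusion essentially for free, so my main task is to check that the hypotheses of Lemma~\ref{lem:trans} are satisfied for the operators $T=\mathcal{I}^{\alpha,\sigma}$ and $\widetilde T = \mathcal{I}^\sigma$. First I would fix an arbitrary $f\in L^p(Ud\mu_\alpha)$ and verify that $f\circ\phi$ belongs to $L^p(\mathbb R^{|n|},U\circ\phi)$. This is a direct consequence of integrating in poly-polar coordinates associated to the factorization $\mathbb R^{|n|}=\mathbb R^{n_1}\times\cdots\times\mathbb R^{n_d}$: since $2\alpha_i+1=n_i-1$ by \eqref{half}, the Jacobian of $\phi$ produces exactly the density of $\mu_\alpha$ times the product of surface measures, so that $\|f\circ\phi\|_{L^p(\mathbb R^{|n|},U\circ\phi)}^p = \mathcal{C}_{dn}\|f\|_{L^p(Ud\mu_\alpha)}^p$, and an analogous identity holds on the $L^q$ side with $V$.

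Next I would note that, as remarked in the paper after Theorem~\ref{thm:Hwt}, boundedness of $\mathcal{I}^\sigma$ on $L^p(\mathbb R^{|n|},U\circ\phi)$ implicitly contains the statement that every element of this space lies in $\domain\mathcal{I}^\sigma$. In particular $f\circ\phi\in\domain\mathcal{I}^\sigma$, so Corollary~\ref{cor:Dom} applies and yields both $f\in\domain\mathcal{I}^{\alpha,\sigma}$ and the pointwise intertwining $(\mathcal{I}^{\alpha,\sigma}f)\circ\phi=\mathcal{I}^\sigma(f\circ\phi)$ almost everywhere. This is precisely the intertwining hypothesis required by Lemma~\ref{lem:trans}. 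Feeding the assumed norm estimate for $\mathcal{I}^\sigma$ into that lemma then gives
$$
\|\mathcal{I}^{\alpha,\sigma}f\|_{L^q(Vd\mu_\alpha)}\lesssim \|f\|_{L^p(Ud\mu_\alpha)},
$$
with an implicit constant picking up a factor $\mathcal{C}_{dn}^{1/p-1/q}$, which is harmless.

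There is no real obstacle here, since the substantial work has been done in the preceding propositions. The only delicate point worth double-checking is the interpretation of the domain: one must be sure that the natural-domain condition for $\mathcal{I}^\sigma$ is available for every $f\circ\phi$ coming from an $L^p(Ud\mu_\alpha)$ function, because Corollary~\ref{cor:Dom} is stated as a conditional implication. Once this is observed, the argument is a pure transference assembly and no further estimates on kernels or weights are needed.
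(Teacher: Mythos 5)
Your proposal is correct and follows exactly the paper's route: the paper proves this corollary precisely by combining Corollary \ref{cor:Dom} (which supplies the domain inclusion and the pointwise intertwining $(\mathcal{I}^{\alpha,\sigma}f)\circ\phi=\mathcal{I}^{\sigma}(f\circ\phi)$) with the norm-transference Lemma \ref{lem:trans}. Your additional verifications---the poly-polar computation showing $\|f\circ\phi\|_{L^p(U\circ\phi)}^p=\mathcal{C}_{dn}\|f\|_{L^p(Ud\mu_\alpha)}^p$ and the remark that boundedness of $\mathcal{I}^{\sigma}$ is understood to include the domain statement---are exactly the details the paper leaves implicit.
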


Combining this with Theorems \ref{thm:Her} and \ref{thm:Hwt} we get 
\begin{propo} \label{pro:lhalf}
The statements of Theorems \ref{thm:mainunw} and \ref{thm:main} are true for the discrete 
set of half-integer type multi-indices $\alpha$, i.e. for $\alpha$ of the form \eqref{half}.
\end{propo}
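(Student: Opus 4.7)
The plan is to reduce both theorems at half-integer $\alpha$ directly to their Hermite counterparts by applying Corollary \ref{cor:trans}. Two elementary but crucial observations make this transfer essentially tight: first, when $\alpha$ and $n$ are related by \eqref{half}, the dimension $|n|$ of the ambient Hermite space equals $2|\alpha|+2d$, which is precisely the effective dimension appearing in Theorems \ref{thm:mainunw} and \ref{thm:main}; second, the map $\phi$ is norm-preserving, since
$$
\|\phi(\bar{y})\|^2 = \sum_{i=1}^d \|y^i\|^2 = \|\bar{y}\|^2.
$$
Consequently, for a power weight $U(x)=\|x\|^{ap}$ on $\R_+$ we have $U\circ\phi(\bar{y})=\|\bar{y}\|^{ap}$, so the weighted Hermite inequality in $\mathbb{R}^{|n|}$ transfers term-for-term to the weighted Laguerre inequality.

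First, for Theorem \ref{thm:mainunw}, I would take $U=V=1$ in Corollary \ref{cor:trans} and apply Theorem \ref{thm:Her} in dimension $|n|=2|\alpha|+2d$. The case $\sigma\ge|\alpha|+d$ corresponds to $\sigma\ge|n|/2$, and the constraint $\frac{1}{p}-\frac{\sigma}{|\alpha|+d}\le \frac{1}{q}<\frac{1}{p}+\frac{\sigma}{|\alpha|+d}$ becomes exactly the constraint $\frac{1}{p}-\frac{2\sigma}{|n|}\le \frac{1}{q}<\frac{1}{p}+\frac{2\sigma}{|n|}$ from Theorem \ref{thm:Her}; the excluded endpoint $p=1$, $q=\frac{|\alpha|+d}{|\alpha|+d-\sigma}$ matches the excluded endpoint $p=1$, $q=\frac{|n|/2}{|n|/2-\sigma}$. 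For Theorem \ref{thm:main} I would take $U(x)=\|x\|^{ap}$, $V(x)=\|x\|^{-bq}$, and apply Theorem \ref{thm:Hwt} in dimension $|n|$. The hypotheses $a<(2|\alpha|+2d)/p'$, $b<(2|\alpha|+2d)/q$, $a+b\ge 0$ and the supplementary condition $\frac{1}{q}\ge\frac{1}{p}-\frac{2\sigma-a-b}{2|\alpha|+2d}$ translate without modification to the hypotheses $a<|n|/p'$, $b<|n|/q$, $a+b\ge 0$ and $\frac{1}{q}\ge\frac{1}{p}-\frac{2\sigma-a-b}{|n|}$ of Theorem \ref{thm:Hwt}.

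It remains to justify the spectral identities \eqref{eq45} and \eqref{co44}. I would repeat the duality argument used in the proof of Theorem \ref{thm:Her}: once boundedness of $\mathcal{I}^{\alpha,\sigma}$ from $L^p(\|x\|^{ap}d\mu_\alpha)$ into $L^q(\|x\|^{-bq}d\mu_\alpha)$ is known, the functional $f\mapsto \langle\mathcal{I}^{\alpha,\sigma}f,\ell_k^\alpha\rangle_{d\mu_\alpha}$ is continuous on $L^p(\|x\|^{ap}d\mu_\alpha)$, because the estimate
$$
|\langle\mathcal{I}^{\alpha,\sigma}f,\ell_k^\alpha\rangle_{d\mu_\alpha}|
\le \|\mathcal{I}^{\alpha,\sigma}f\|_{L^q(\|x\|^{-bq}d\mu_\alpha)}\,\|\ell_k^\alpha\|_{L^{q'}(\|x\|^{bq'}d\mu_\alpha)}
$$
holds with a finite second factor (here the conditions $b<(2|\alpha|+2d)/q$ and $\alpha\in[-1/2,\infty)^d$ guarantee integrability of $|\ell_k^\alpha|^{q'}\|x\|^{bq'}$ against $d\mu_\alpha$). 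This functional coincides with $f\mapsto(\lambda_k^\alpha)^{-\sigma}\langle f,\ell_k^\alpha\rangle_{d\mu_\alpha}$, also bounded on the same space, on the dense subspace spanned by the $\ell_k^\alpha$, where the identity holds by the spectral computation performed in Corollary \ref{cor:neg3}.

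The main obstacle is not really conceptual but rather bookkeeping: one must verify carefully that every exceptional/excluded case in Theorems \ref{thm:Her} and \ref{thm:Hwt} (for example, the weak-type endpoint $p=1$, $q=|n|/(|n|-2\sigma)$ and the sharp conditions at $p=\infty$ or $q=\infty$) corresponds, under the substitution $|n|=2|\alpha|+2d$, precisely to the exceptional/excluded case stated for the Laguerre results, and that the density of the span of $\{\ell_k^\alpha\}$ in the weighted spaces $L^p(\|x\|^{ap}d\mu_\alpha)$ holds under the stated conditions on $a$ and $p$. Apart from this routine verification the proposition reduces to chaining together Corollary \ref{cor:trans} with Theorems \ref{thm:Her} and \ref{thm:Hwt}.
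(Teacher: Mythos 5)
Your proposal is correct and follows essentially the same route as the paper, which proves Proposition \ref{pro:lhalf} in one line by combining Corollary \ref{cor:trans} with Theorems \ref{thm:Her} and \ref{thm:Hwt}; the key observations you make explicit (that $|n|=2|\alpha|+2d$ and that $\phi$ preserves the Euclidean norm, so power weights pull back to power weights) are exactly what makes that one-line deduction work. Your treatment of \eqref{eq45} and \eqref{co44} by the duality argument also matches how the paper handles these identities at the end of the proof of Theorem \ref{thm:main}.
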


Now the convexity principle comes into play, and together with Proposition \ref{pro:lhalf} it allows
to justify Theorems \ref{thm:mainunw} and \ref{thm:main}.
Below we give a proof of Theorem \ref{thm:main} only.
Proving Theorem \ref{thm:mainunw} relies on exactly the same arguments and is in fact simpler
due to the absence of weights.

\begin{proof}[Proof of Theorem \ref{thm:main}]
We shall iterate Theorem \ref{thm:avg} $d$ times, applying it gradually for successive coordinate axes.
Fix $\sigma > 0$. In the first step we deal with the first axis. Let $\alpha$ be of the form
$\alpha=(\alpha_1,\alpha_{2,d})$, where $\alpha_1 \in [-1\slash 2,\infty)$ is arbitrary and 
$\alpha_{2,d} \in [-1\slash 2,\infty)^{d-1}$ is half-integer. In addition, we may assume that $\alpha_1$
is not a half-integer (otherwise Proposition \ref{pro:lhalf} does the job). Then $\alpha$ is
a convex combination of two half-integer multi-indices, say
$$
\alpha = \lambda \beta + (1-\lambda) \gamma, \quad \textrm{where} \quad \beta=(\beta_1,\alpha_{2,d}), 
	\quad \gamma = (\gamma_1,\alpha_{2,d}),
$$
for some half-integers $\beta_1,\gamma_1 \ge -1\slash 2$ and certain $\lambda \in (0,1)$. Take
$$
\sigma_\beta = \sigma \frac{|\beta|+d}{|\alpha|+d}, \qquad 
\sigma_\gamma = \sigma \frac{|\gamma|+d}{|\alpha|+d}
$$
satisfying $\sigma=\lambda \sigma_\beta + (1-\lambda)\sigma_\gamma$.
Further, given $a$ and $b$, take 
$$
{a_{\beta}}=a\frac{|\beta|+d}{|\alpha|+d},\qquad 
{b_{\beta}}=b\frac{|\beta|+d}{|\alpha|+d}, \qquad
{a_{\gamma}}=a\frac{|\gamma|+d}{|\alpha|+d}, \qquad
{b_{\gamma}}=b\frac{|\gamma|+d}{|\alpha|+d},
$$
and notice that $a=\lambda a_{\beta}+(1-\lambda) a_{\gamma}$ and $b=\lambda
b_{\beta}+(1-\lambda)b_{\gamma}$.
Now the crucial observation is that the assumptions of Theorem \ref{thm:main} are satisfied 
with the parameters $(d,p,q,\alpha,a,b)$ if and only if they are satisfied with the parameters
$(d,p,q,\beta,a_{\beta},b_{\beta})$ and simultaneously with the parameters
$(d,p,q,\gamma,a_{\gamma},b_{\gamma})$.
Moreover, the splitting into cases (i) and (ii) in Theorem \ref{thm:main} with parameters
$(d,p,q,\alpha,a,b)$ coincides with the two splittings determined by the parameters
$(d,p,q,\beta,a_{\beta},b_{\beta})$ and $(d,p,q,\gamma,a_{\gamma},b_{\gamma})$.
Thus, in virtue of Theorem \ref{thm:avg} and Proposition \ref{pro:lhalf}, we infer the desired boundedness
of $\mathcal{I}^{\alpha,\sigma}$.

In the second step we let $\alpha$ be of the form $\alpha=(\alpha_1,\alpha_2,\alpha_{3,d})$, where
$\alpha_1,\alpha_2 \in[-1\slash 2,\infty)$ are arbitrary and $\alpha_{3,d}\in [-1\slash 2,\infty)^{d-2}$ is
half-integer
(we may assume that $\alpha_2$ is not a half-integer). Then $\alpha$ is a convex combination,
$$
\alpha = \lambda \beta + (1-\lambda) \gamma, \quad \textrm{with} \quad 
\beta=(\alpha_1,\beta_2,\alpha_{3,d}), \quad \gamma = (\alpha_1,\gamma_2,\alpha_{3,d}),
$$ 
for some half-integers $\beta_2,\gamma_2 \ge -1\slash 2$ and certain $\lambda \in (0,1)$.
Choosing $\sigma_{\beta},\sigma_{\gamma},a_{\beta},b_{\beta},a_{\gamma},b_{\gamma}$ by means of the
formulas from the previous step, and using the already proved partial result, we get suitable boundedness
properties of $\mathcal{I}^{\sigma_{\beta},\beta}$ and $\mathcal{I}^{\sigma_{\gamma},\gamma}$.
Then Theorem \ref{thm:avg} gives the required boundedness properties of $\mathcal{I}^{\alpha,\sigma}$.

In the next steps we repeat the procedure until the last coordinate of $\alpha$ is reached, in each step
using a partial result obtained in the preceding step. After $d$ steps the theorem is proved.

Finally, proving \eqref{co44} we only need to know that 
$\|\ell_k^\alpha\|_{L^{q'}(\|x\|^{bq'}d\mu_\alpha)}<\infty$, but this holds in view of the inequality
$b>-2(|\alpha|+d)/q'$ following from the assumptions imposed on $d,p,q,\alpha,a,b$.
\end{proof}

We finish this section by noting that weak type estimates in the setting of Hermite function
expansions and of Laguerre expansions of convolution type are also related.
In fact statements analogous to Lemma \ref{lem:trans} and Corollary \ref{cor:trans} are true for
weak type inequalities. This allows to transfer the weak type $(1,q)$ result from Theorem \ref{thm:Her}
to the Laguerre settings of half-integer orders $\alpha$.
Unfortunately, there seems to be no tool similar to the convexity principle that would enable
to interpolate the weak type estimates to all intermediate $\alpha$.

\section{Negative powers of the Dunkl  harmonic oscillator} \label{sec:dunkl}

In this section we take an opportunity to continue the study of spectral properties of the Dunkl 
harmonic oscillator in the context of a finite reflection group acting on $\R$ and
isomorphic to $\mathbb{Z}^d_2=\{0,1\}^d$. This completes in some sense our investigation began in
\cite{NS3} and continued in \cite{NS4}. We now recall the most relevant ingredients of the setting, 
kindly referring the reader to 
\cite{NS4} for a more detailed description. We keep the notation used in \cite{NS3,NS4}, but to avoid a
possible collision with notation of the present paper, in several places we use the letter $D$ to
indicate the Dunkl setting.

The Dunkl harmonic oscillator
\begin{equation*}
L_{\alpha}^D=-\Delta_{\alpha}+\|x\|^2
\end{equation*}
is a differential-difference operator in $\R$, 
where $\Delta_{\alpha}$ denotes the Dunkl Laplacian in the context of a finite  reflection group on $\R$ 
isomorphic to $\mathbb{Z}^d_2$. Here $\alpha\in[-1/2,\infty)^d$ plays a role of the multiplicity function.
$L_{\alpha}^D$
considered initially on the Schwartz class $\mathcal{S}(\R)$ as the natural domain is 
symmetric and positive in $L^2(\R,w_\alpha)$, where
\begin{equation*}
w_\alpha(x)= \prod_{j=1}^d|x_j|^{2\alpha_j+1}, \qquad x\in \R
\end{equation*}
(notice that $w_{\alpha}$ restricted to $\R_+$ is precisely the density of the measure $\mu_{\alpha}$
related to the Laguerre setting from Sections \ref{sec:lag} and \ref{sec:aver}).
The associated generalized Hermite functions are tensor products
$$
h_{k}^{\alpha}(x) = h _{k_1}^{\alpha _1}(x_1) \cdot \ldots \cdot
h_{k_d}^{\alpha _d}(x_d), \qquad x \in \R, \quad k \in \mathbb{N}^d,
$$
where $h_{k_i}^{\alpha _i}$ are the one-dimensional functions
\begin{equation*}
h_{2k_i}^{\alpha_i}(x_i)=d_{2k_i,\alpha_i}e^{-x_i^2/2}L_{k_i}^{\alpha_i}(x_i^2), \qquad
h_{2k_i+1}^{\alpha _i}(x_i)=d_{2k_i+1,\alpha_i}e^{-x_i^2/2}x_iL_{k_i}^{\alpha_i+1}(x_i^2),
\end{equation*}
with
$$ 
d_{2k_i,\alpha_i}=(-1)^{k_i}\bigg(\frac{\Gamma(k_i+1)}{\Gamma(k_i+\alpha_i +1)}\bigg)^{1/2}, \qquad
d_{2k_i+1,\alpha_i}=(-1)^{k_i}\bigg(\frac{\Gamma(k_i+1)}{\Gamma(k_i+\alpha_i +2)}\bigg)^{1/2}.
$$ 
The system $\{h_k^{\alpha}: k\in\N\}$ is orthonormal and complete in $L^2(\mathbb{R}^d, w_{\alpha})$,
and ${L}_{\alpha}^D h_k^{\alpha}=(2|k|+2|\alpha|+2d)h_k^{\alpha}$.

Let $\mathcal{L}_{\alpha}^D$ be the self-adjoint extension of $L_{\alpha}^D$, as defined in 
\cite[p.\,542]{NS3} or \cite[p.\,3]{NS4}.
The negative power  $(\mathcal{L}_{\alpha}^D)^{-\sigma}$, $\sigma>0$, is given on  $L^2(\R,w_\alpha)$
by the spectral series
\begin{equation*}
  \big(\mathcal{L}_{\alpha}^D\big)^{-\sigma} f=\sum_{k\in \N} (2|k|+2|\alpha|+2d)^{-\sigma}\langle
   f,h_k^{\alpha}
  \rangle_{dw_\alpha} \, h_k^{\alpha};
\end{equation*}
here 
$\langle\cdot,\cdot
  \rangle_{dw_\alpha}$ denotes the canonical inner product in $L^2(\mathbb{R}^d, w_{\alpha})$. 
Clearly, $(\mathcal{L}_{\alpha}^D)^{-\sigma}$ is a contraction in $L^2(\R,w_{\alpha})$.
For $\alpha_o=(-1/2,\ldots,-1/2)$ one recovers the setting of the classic harmonic oscillator:
$\Delta_{\alpha_o}$ becomes the Euclidean Laplacian, $w_{\alpha_o}\equiv1$ and $h_k^{\alpha_o}$
are the usual Hermite functions.
  
The semigroup $\{e^{-t\mathcal{L}_{\alpha}^D} : t>0\}$ generated by
$\mathcal{L}_{\alpha}^D$  has the integral representation
\begin{equation*}
  e^{-t\mathcal{L}_{\alpha}^D} f(x)=\int_{\R} G_t^{\alpha,D}(x,y)f(y)\,dw_{\alpha}(y),
  \qquad x\in \R,
\end{equation*}
where the heat kernel is given by
\begin{equation*}
G^{\alpha,D}_t(x,y)=\sum_{n=0}^\infty e^{-t(2n+2|\alpha|+2d)} \sum_{|k|=n}
h_k^\alpha(x)h_k^\alpha(y).
\end{equation*}
The oscillating series defining $G_t^{\alpha,D}(x,y)$ can be summed, and the resulting expression
involves the modified Bessel function $I_{\nu}$, see \cite{NS3} or \cite{NS4}.
Here, however, we shall not need explicitly that formula.
To obtain $L^p-L^q$ estimates for $(\mathcal{L}_{\alpha}^D)^{-\sigma}$ we will make use of
the estimate, cf. \cite[p.\,545]{NS3},
\begin{equation} \label{compld}
G_t^{\alpha,D}(x,y) \lesssim G_t^{\alpha}(|x|,|y|), \qquad x,y \in \R, \quad t>0,
\end{equation}
where $|x|=(|x_1|,\ldots,|x_d|)$, for $x=(x_1,\ldots,x_d)\in\R$, and the kernel
$G_t^{\alpha}(x,y)$ is understood to be given by, see \eqref{11},
$$
G^{\alpha}_t(x,y)=(\sinh 2t)^{-d-|\alpha|}
\exp\Big({-\frac{1}{2} \coth(2t)\big(\|x\|^{2}+\|y\|^{2}\big)}\Big)
\prod^{d}_{i=1} \Big(\frac{x_i y_i}{\sinh 2t}\Big)^{-\alpha_i} 
I_{\alpha_i}\left(\frac{x_i y_i}{\sinh 2t}\right).
$$
The following comment is in order at this point.
The modified Bessel function of the first kind and order $\nu$ is defined by
\begin{equation*}
I_{\nu}(z) = \sum_{k=0}^{\infty} \frac{(z\slash 2)^{\nu+2k}}{\Gamma(k+1)\Gamma(k+\nu+1)}.
\end{equation*}
Here we consider the function $z \mapsto z^{\nu}$, and thus also the Bessel function $I_{\nu}(z)$,
as an analytic function defined on $\C \backslash \{ix : x \le 0\}$ (usually $I_{\nu}$ is considered
as a function on $\C$ cut along the half-line $(-\infty,0]$). Hence $z^{-\nu}I_\nu(z)$ is readily
extended to an entire function and $s^{-\nu}I_{\nu}(s)$, as a function on $\mathbb{R}$,
is real even positive and smooth for any $\nu > -1$, see \cite[Chapter 5]{Leb}.
In particular we see that the function $G_t^{\alpha}(|x|,|y|)$ is well defined 
by the above formula also at
points where $x_i=0$ or $y_i=0$ for some $i=1,\ldots,d$.
For future reference it is convenient to call such points \emph{critical}
($x\in \R$ is critical if and only if $x_i=0$ for some $i=1,\ldots,d$).
Notice that critical points in $\R$ form a set of null measure. 

We now define the potential kernel
\begin{equation*}
\mathcal{K}^{\alpha,\sigma}_D(x,y) = \frac1{\Gamma(\sigma)}\int_0^\infty G^{\alpha,D}_t(x,y)
 t^{\sigma-1}\,dt, \qquad x,y \in \R,
\end{equation*}
and the potential operator
$$
\mathcal{I}^{\alpha,\sigma}_D f(x) = \int_{\R}\mathcal{K}^{\alpha,\sigma}_D(x,y)f(y)\,dw_\alpha(y)
$$
with the natural domain $\domain \mathcal{I}^{\alpha,\sigma}_D$.
Since by \eqref{compld}
\begin{equation} \label{compot}
\mathcal{K}^{\alpha,\sigma}_D (x,y) \lesssim \mathcal{K}^{\alpha,\sigma}(|x|,|y|), \qquad x,y \in \R,
\end{equation}
(for the critical points $\mathcal{K}^{\alpha,\sigma}(|x|,|y|)$ is defined by integrating
$G_t^{\alpha}(|x|,|y|)$) it follows from remarks concerning well-definess of the Laguerre potential kernel
$\mathcal{K}^{\alpha,\sigma}(x,y)$ that 
$\mathcal{K}^{\alpha,\sigma}_D(x,y)$ is well defined (the relevant integral is convergent)
whenever $|x|\neq|y|$ and $x,y$ are not critical.
The well-definess at the critical points, still for $|x|\neq|y|$, 
can be easily seen by analysis of the kernel $G_t^{\alpha}(|x|,|y|)$,
taking into account the product structure of the kernel, the above comment concerning non-critical points
and the fact that the function $s \mapsto s^{-\nu}I_{\nu}(s)$, $\nu > -1$, has a finite
value at $s=0$.
We remark that a more detailed analysis reveals that $\mathcal{K}^{\alpha,\sigma}_D (x,y)$ 
is well defined for all $x,y \in \R$ provided that $\sigma > d + |\alpha|$.

As consequences of Theorems \ref{thm:mainunw} and \ref{thm:main} 
we get the following results in the Dunkl setting.
\begin{theor} \label{thm:dunklunw}
Assume that $\alpha \in [-1\slash 2,\infty)^d$. Let $\sigma > 0$ and $1\le p < \infty$, $1\le q < \infty$.
If $\sigma \ge |\alpha|+d$, then
\begin{equation} \label{eq32d}
\|\mathcal{I}_D^{\alpha,\sigma}f\|_{L^q(dw_{\alpha})} \lesssim \|f\|_{L^p(dw_{\alpha})}, \qquad
f \in L^p(\R,dw_{\alpha}).
\end{equation}
If $0 < \sigma < |\alpha|+d$, then \eqref{eq32d} holds under the additional condition
$$
\frac{1}{p}-\frac{\sigma}{|\alpha|+d} \le \frac{1}{q} < \frac{1}{p} + \frac{\sigma}{|\alpha|+d},
$$
with exclusion of the case when $p=1$ and $q=\frac{|\alpha|+d}{|\alpha|+d-\sigma}$.
Moreover, under the assumptions ensuring \eqref{eq32d},
\begin{equation*} 
\langle\mathcal{I}_D^{\alpha,\sigma} f, h^\alpha_k\rangle_{dw_{\alpha}}
=(2|k|+2|\alpha|+2d)^{-\sigma}\langle f, h^\alpha_k\rangle_{dw_{\alpha}}, 
\qquad f\in L^p(\R, dw_{\alpha}).
\end{equation*}
\end{theor}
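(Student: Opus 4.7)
The plan is to reduce the Dunkl potential estimates to the corresponding Laguerre ones stated in Theorem \ref{thm:mainunw}, via the pointwise kernel bound \eqref{compot} combined with the octant decomposition of $\R$. Recall that the sign group $\mathcal{E}=\{-1,+1\}^d$ acts on $\R$ by $\varepsilon x = (\varepsilon_1 x_1,\ldots,\varepsilon_d x_d)$, the weight $w_\alpha$ is $\mathcal{E}$-invariant, and $\R=\bigcup_{\varepsilon\in\mathcal{E}}\varepsilon \R_+$ up to a null set. For $f\ge 0$ on $\R$, write $f_\varepsilon$ for the function on $\R_+$ defined by $f_\varepsilon(z)=f(\varepsilon z)$, so that $\int_{\R}f\,dw_\alpha=\sum_\varepsilon \int_{\R_+}f_\varepsilon\,d\mu_\alpha$.

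For the norm estimate, apply \eqref{compot} and substitute $y=\varepsilon z$ on each octant to obtain, for $x\in\R$,
\begin{equation*}
|\mathcal{I}_D^{\alpha,\sigma}f(x)|\lesssim \int_{\R}\mathcal{K}^{\alpha,\sigma}(|x|,|y|)|f(y)|\,dw_\alpha(y)=\sum_{\varepsilon\in\mathcal{E}}\mathcal{I}^{\alpha,\sigma}(|f|_\varepsilon)(|x|).
\end{equation*}
Since $|\varepsilon x|=x$ for $x\in\R_+$ and $w_\alpha$ is $\mathcal{E}$-invariant,
\begin{equation*}
\|\mathcal{I}_D^{\alpha,\sigma}f\|_{L^q(dw_\alpha)}^q = \sum_{\varepsilon'\in\mathcal{E}}\int_{\R_+}|\mathcal{I}_D^{\alpha,\sigma}f(\varepsilon' x)|^q\,d\mu_\alpha(x)\lesssim \sum_{\varepsilon\in\mathcal{E}}\|\mathcal{I}^{\alpha,\sigma}(|f|_\varepsilon)\|_{L^q(d\mu_\alpha)}^q,
\end{equation*}
where I used the elementary inequality $(\sum_\varepsilon a_\varepsilon)^q\lesssim \sum_\varepsilon a_\varepsilon^q$. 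Under the hypotheses of the theorem, which coincide with those of Theorem \ref{thm:mainunw} (the Laguerre case has precisely the same conditions on $\sigma,p,q$ and the same exclusion of the single weak-type endpoint), each term is controlled by $\||f|_\varepsilon\|_{L^p(d\mu_\alpha)}^q$, and summing gives $\lesssim \|f\|_{L^p(dw_\alpha)}^q$. This yields \eqref{eq32d}. Implicitly this also proves the inclusion $L^p(\R,dw_\alpha)\subset\domain\mathcal{I}_D^{\alpha,\sigma}$.

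For the spectral identity, I would mimic the argument used to establish \eqref{co}, \eqref{co3} and \eqref{eq45}. For each fixed $k$, the map $f\mapsto \langle\mathcal{I}_D^{\alpha,\sigma}f,h_k^\alpha\rangle_{dw_\alpha}$ is a bounded linear functional on $L^p(\R,dw_\alpha)$, since by H\"older and the already proven \eqref{eq32d},
\begin{equation*}
|\langle\mathcal{I}_D^{\alpha,\sigma}f,h_k^\alpha\rangle_{dw_\alpha}|\le \|\mathcal{I}_D^{\alpha,\sigma}f\|_{L^q(dw_\alpha)}\|h_k^\alpha\|_{L^{q'}(dw_\alpha)}\lesssim \|h_k^\alpha\|_{L^{q'}(dw_\alpha)}\|f\|_{L^p(dw_\alpha)},
\end{equation*}
and $\|h_k^\alpha\|_{L^{q'}(dw_\alpha)}<\infty$ because $h_k^\alpha$ is bounded and decays like $e^{-\|x\|^2/2}$ times a polynomial. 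The functional $f\mapsto (2|k|+2|\alpha|+2d)^{-\sigma}\langle f,h_k^\alpha\rangle_{dw_\alpha}$ is clearly bounded as well. Both agree on the linear span of $\{h_j^\alpha\}_{j\in\N}$, which is dense in $L^p(\R,dw_\alpha)$: for a single $h_j^\alpha$ one uses Fubini (justified because $\mathcal{K}_D^{\alpha,\sigma}(x,\cdot)\in L^1(dw_\alpha)$ by \eqref{compot} and the corresponding Laguerre fact from the proof of Corollary \ref{cor:neg3}, and $h_j^\alpha$ is bounded) to interchange the $t$- and $y$-integrations in $\int\mathcal{K}_D^{\alpha,\sigma}(x,y)h_j^\alpha(y)\,dw_\alpha(y)$, obtaining $(2|j|+2|\alpha|+2d)^{-\sigma}h_j^\alpha(x)$, which upon pairing with $h_k^\alpha$ yields the claimed identity via orthonormality.

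The main obstacle is merely bookkeeping: one must verify that the hypotheses and the excluded endpoint of Theorem \ref{thm:mainunw} transfer verbatim to the Dunkl statement. This is clean here because the quantity $|\alpha|+d$ playing the role of a ``dimension'' is the same in both settings, and the symmetrization via $\mathcal{E}$ preserves all $L^p(dw_\alpha)$ norms exactly, so no constants or endpoints deteriorate.
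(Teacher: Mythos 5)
Your proposal is correct and follows essentially the same route as the paper: the paper proves Theorem \ref{thm:dunklunw} by the very argument it gives for Theorem \ref{thm:dunkl}, namely the pointwise domination \eqref{compot} followed by the decomposition of $\R$ into reflected copies of $\R_+$ and an appeal to Theorem \ref{thm:mainunw}, with the spectral identity handled exactly as in the Hermite and Laguerre cases. The only cosmetic difference is that you do not single out the critical points (where some coordinate vanishes) when justifying Fubini, but since these form a null set this does not affect the a.e.\ identity needed for the pairing against $h_k^{\alpha}$.
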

\begin{theor} \label{thm:dunkl}
Assume that $\alpha \in [-1\slash 2, \infty)^d$. 
Let $\sigma > 0$, $1<p \le q < \infty$ and 
\begin{equation*}
a < (2|\alpha|+2d)\slash p', \qquad 
b < (2|\alpha|+2d)\slash q, \qquad a+b \ge 0.
\end{equation*}
\begin{itemize}
\item[(i)]
If $\sigma \ge |\alpha|+d$, then $\mathcal{I}_D^{\alpha,\sigma}$ maps boundedly 
$L^p(\R,\|x\|^{a p}dw_{\alpha})$ into $L^q(\R,\|x\|^{-b q}dw_{\alpha})$. 
\item[(ii)]
If $\sigma < |\alpha|+d$, then the same boundedness holds under the additional condition
\begin{equation*} 
\frac{1}{q} \ge \frac{1}{p} - \frac{2\sigma-a-b}{2|\alpha|+2d}.
\end{equation*}
\end{itemize}
Moreover, under the assumptions ensuring boundedness of $\mathcal{I}_D^{\alpha,\sigma}$ 
from $L^p(\R,\|x\|^{a p}dw_{\alpha})$ into $L^q(\R,\|x\|^{-b q}dw_{\alpha})$, 
\begin{equation} \label{co44du}
\langle\mathcal{I}_D^{\alpha,\sigma} f, h^\alpha_k\rangle_{dw_{\alpha}}
=(2|k|+2|\alpha|+2d)^{-\sigma}\langle f, h^\alpha_k\rangle_{dw_{\alpha}}, 
\qquad f\in L^p(\R, \|x\|^{a p}dw_{\alpha}).
\end{equation}
\end{theor}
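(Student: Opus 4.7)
The plan is to reduce the Dunkl estimates to their Laguerre counterparts in Theorem \ref{thm:main} by means of the pointwise kernel comparison \eqref{compot}, exploiting that both the weight $w_\alpha$ and the Euclidean norm $\|\cdot\|$ are invariant under sign changes of coordinates. Write $\R$ as the essentially disjoint union of the $2^d$ octants $\varepsilon\R_+$, $\varepsilon \in \mathcal{E}=\{-1,+1\}^d$. The change of variables $y=\varepsilon z$ ($z\in\R_+$) transports $dw_\alpha$ restricted to $\varepsilon\R_+$ onto $d\mu_\alpha$ on $\R_+$, preserves $\|y\|$, and realises $|y|=z$. These observations are the only symmetry facts needed.

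For the norm estimate, positivity together with \eqref{compot} gives
\begin{equation*}
|\mathcal{I}_D^{\alpha,\sigma} f(x)| \;\lesssim\; \sum_{\varepsilon \in \mathcal{E}} \mathcal{I}^{\alpha,\sigma}(f_\varepsilon)(|x|), \qquad f_\varepsilon(z) := |f(\varepsilon z)|, \quad z \in \R_+.
\end{equation*}
Taking the $L^q(\R,\|x\|^{-bq}dw_\alpha)$-norm and decomposing the outer integral into octants reduces matters to controlling, for each $\varepsilon\in\mathcal{E}$, the quantity
\begin{equation*}
\|\mathcal{I}^{\alpha,\sigma} f_\varepsilon\|_{L^q(\R_+,\,\|x\|^{-bq}d\mu_\alpha)} \;\lesssim\; \|f_\varepsilon\|_{L^p(\R_+,\,\|x\|^{ap}d\mu_\alpha)},
\end{equation*}
which is exactly Theorem \ref{thm:main} under the very same hypotheses on $\sigma,p,q,a,b,\alpha$ (note that the two cases (i) and (ii) translate verbatim between the two theorems, since the relevant constraints involve $|\alpha|+d$ on both sides). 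Reversing the octant decomposition on the right yields
\begin{equation*}
\sum_{\varepsilon\in\mathcal{E}}\|f_\varepsilon\|_{L^p(\R_+,\,\|x\|^{ap}d\mu_\alpha)}^p = \|f\|_{L^p(\R,\,\|x\|^{ap}dw_\alpha)}^p,
\end{equation*}
which, combined with the $q \ge p$ monotonicity in $\ell^q$, delivers the stated bound.

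The coefficient identity \eqref{co44du} is handled in the spirit of Corollary \ref{negative}. The map $f \mapsto \langle \mathcal{I}_D^{\alpha,\sigma}f, h_k^\alpha\rangle_{dw_\alpha}$ is a bounded linear functional on $L^p(\R,\|x\|^{ap}dw_\alpha)$: H\"older combined with the boundedness just proved reduces the matter to $\|h_k^\alpha\|_{L^{q'}(\R,\,\|x\|^{bq'}dw_\alpha)} < \infty$, which follows from the rapid Gaussian decay of $h_k^\alpha$ together with $bq' > -(2|\alpha|+2d)$ (a consequence of the assumption $b < (2|\alpha|+2d)/q$). One then verifies $\mathcal{I}_D^{\alpha,\sigma} h_k^\alpha = (2|k|+2|\alpha|+2d)^{-\sigma} h_k^\alpha$ by interchanging the time-integral hidden in $\mathcal{K}_D^{\alpha,\sigma}$ with the spatial integral; Fubini is legitimate since $\mathcal{K}_D^{\alpha,\sigma}(x,\cdot) \in L^1(\R, dw_\alpha)$ for fixed $x$ (by \eqref{compot} and the integrability of the Laguerre potential kernel used already in Corollary \ref{cor:neg3}) and $h_k^\alpha$ is bounded. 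Density of the span of $\{h_k^\alpha\}$ in $L^p(\R,\|x\|^{ap}dw_\alpha)$ then forces the two bounded functionals to coincide, and \eqref{co44du} follows.

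The main obstacle is essentially bookkeeping rather than conceptual: one must track the $2^d \times 2^d$ cross terms arising from the two octant decompositions (outer and inner) with constants uniform in $f$, and ensure that the critical set $\{x : x_i=0 \text{ for some }i\}$ plays no role. The latter is automatic since it has $w_\alpha$-measure zero, so every manipulation through $x\mapsto |x|$ is harmless at the level of $L^p$ norms.
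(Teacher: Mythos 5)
Your proposal is correct and follows essentially the same route as the paper: the reflection (octant) decomposition combined with the kernel domination \eqref{compot} to reduce the norm inequality to Theorem \ref{thm:main}, and the standard bounded-functional/Fubini argument (hinging on $\|h_k^{\alpha}\|_{L^{q'}(\|x\|^{bq'}dw_{\alpha})}<\infty$ and $\mathcal{K}_D^{\alpha,\sigma}(x,\cdot)\in L^1(dw_{\alpha})$) for the coefficient identity \eqref{co44du}. The only cosmetic difference is that the paper phrases the reduction via reflection-invariance of the weights and of $\mathcal{I}^{\alpha,\sigma}f_{\varepsilon}(|\cdot|)$ rather than an explicit octant-by-octant bookkeeping, but the content is identical.
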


Note that implicitly Theorem \ref{thm:dunklunw} asserts the inclusion
$L^p(\R,dw_{\alpha}) \subset \domain \mathcal{I}_D^{\alpha,\sigma}$, $1\le p < \infty$.
Similarly, Theorem \ref{thm:dunkl} asserts the inclusion
$L^p(\R,\|x\|^{a p}dw_{\alpha}) \subset \domain \mathcal{I}_D^{\alpha,\sigma}$
in all the cases when weighted $L^p-L^q$ boundedness holds. 
Notice also that for $\alpha_o=(-1/2,\ldots,-1/2)$ Theorems \ref{thm:dunkl} and \ref{thm:Hwt} coincide,
as it should be, since for $\alpha=\alpha_o$ both settings coincide. 
Thus the present result generalizes Theorem \ref{thm:Hwt}.
Moreover, Theorem \ref{thm:dunkl} can be regarded as an extension of Theorem \ref{thm:main}
since the Laguerre potential operator $\mathcal{I}^{\alpha,\sigma}$ applied to a function $f$ on $\R_+$
corresponds to $\mathcal{I}_D^{\alpha,\sigma}$ applied to the $\mathbb{Z}_2^d$-symmetric 
(reflection invariant) extension of $f$ to $\R$.
Analogous comments concern relations between Theorem \ref{thm:dunklunw} and Theorems \ref{thm:Her}
and \ref{thm:mainunw}.

We now give a proof of Theorem \ref{thm:dunkl}.
Proving Theorem \ref{thm:dunklunw} relies on exactly the same arguments and Theorem \ref{thm:mainunw},
hence we omit the details.

\begin{proof}[Proof of Theorem \ref{thm:dunkl}]
It is sufficient to consider only nonnegative functions on $\R$; let $f$ be such a function.
Given $\varepsilon \in \mathbb{Z}_2^d$, denote by $f_{\varepsilon}$ the $\varepsilon$-reflection
of $f$, that is
$$
f_{\varepsilon}(y) = f\big( (-1)^{\varepsilon_1}y_1,\ldots,(-1)^{\varepsilon_d}y_d \big), \qquad y \in \R.
$$
Noticing that the density $w_{\alpha}$ is reflection-invariant and using \eqref{compot} we get
\begin{align*}
\mathcal{I}^{\alpha,\sigma}_D f(x) & \lesssim \int_{\R} \mathcal{K}^{\alpha,\sigma}(|x|,|y|)f(y)\,
	dw_{\alpha}(y) \\
& = \sum_{\varepsilon \in \mathbb{Z}_2^d} \int_{\R_+} \mathcal{K}^{\alpha,\sigma}(|x|,y)
		f_{\varepsilon}(y) \, d\mu_{\alpha}(y) \\
& = \sum_{\varepsilon \in \mathbb{Z}_2^d} \mathcal{I}^{\alpha,\sigma} f_{\varepsilon}(|x|),
\end{align*}
where $x\in \R$ is assumed to be non-critical and $f_{\varepsilon}$ in the last occurrence is
understood as the restriction of $f_{\varepsilon}$ to $\R_+$.
Since power weights in $\R$, as well as the function 
$\mathcal{I}^{\alpha,\sigma} f_{\varepsilon}(|\cdot|)$, are reflection-invariant, it follows that
$$
\|\mathcal{I}^{\alpha,\sigma}_D f \|_{L^q(\R,\|x\|^{-bq}dw_{\alpha})} \lesssim 
	\sum_{\varepsilon \in \mathbb{Z}_2^d} 
	\|\mathcal{I}^{\alpha,\sigma} f_{\varepsilon}\|_{L^q(\R_+,\|x\|^{-bq}d\mu_{\alpha})}.
$$ 
Observing now that
$$
\sum_{\varepsilon \in \mathbb{Z}_2^d}
	\|f_{\varepsilon}\|_{L^p(\R_+,\|x\|^{ap}d\mu_{\alpha})} \simeq \|f\|_{L^p(\R,\|x\|^{ap}dw_{\alpha})},
$$
we see that the first part of Theorem \ref{thm:dunkl} is readily deduced from Theorem \ref{thm:main}.

To prove \eqref{co44du} we argue as in the other settings.
Then we only need to know that $\|h_k^{\alpha}\|_{L^{q'}(\|x\|^{bq'}dw_{\alpha})} < \infty$,
but this follows from the explicit form of the functions $h_k^{\alpha}$
and the assumptions imposed on $d,p,q,\alpha,a,b$.
\end{proof}

As a consequence of Theorem \ref{thm:dunklunw} we state the following.
\begin{corollary}
Let  $\alpha \in [-1/2,\infty)^{d}$, $\sigma>0$, $1\le p < \infty$, $1\le q < \infty$, and $(p,q)$ be
such a pair that \eqref{eq32d} holds.
Then $(\mathcal{L}^D_\alpha)^{-\sigma}$ 
extends to a bounded operator from $L^p(\R,dw_{\alpha})$ to $L^q(\R,dw_{\alpha})$. 
Moreover, denoting this extension by $(\mathcal{L}^D_\alpha)_{pq}^{-\sigma}$, 
for any $k\in\N$ we have
\begin{equation*}
\langle(\mathcal{L}^D_\alpha)_{pq}^{-\sigma} f,
h^\alpha_k\rangle_{dw_{\alpha}}=(2|k|+2|\alpha|+2d)^{-\sigma}\langle f,
h^\alpha_k\rangle_{dw_{\alpha}}, 
\qquad f\in L^p(\R,dw_{\alpha}).
\end{equation*}
\end{corollary}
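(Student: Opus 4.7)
The plan is to mimic the arguments used for Corollaries \ref{negative}, \ref{cor:neg2} and \ref{cor:neg3}. First I would establish the identity $(\mathcal{L}^D_\alpha)^{-\sigma}=\mathcal{I}^{\alpha,\sigma}_D$ on $L^2(\R,dw_\alpha)$. Both operators are bounded on $L^2(\R,dw_\alpha)$: the former because spectral calculus gives a contraction, the latter by Theorem \ref{thm:dunklunw} applied with $p=q=2$ (taking any admissible pair whose boundedness contains this case; for $\sigma<|\alpha|+d$ this requires $1/2<1/2+\sigma/(|\alpha|+d)$, which is automatic). Since the linear span of $\{h^\alpha_k\}$ is dense in $L^2(\R,dw_\alpha)$, it suffices to check pointwise that $\mathcal{I}^{\alpha,\sigma}_D h^\alpha_k = (2|k|+2|\alpha|+2d)^{-\sigma} h^\alpha_k$. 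Writing
\begin{align*}
\Gamma(\sigma)\int_{\R}\mathcal{K}^{\alpha,\sigma}_D(x,y) h^\alpha_k(y)\,dw_\alpha(y)
&=\int_{\R}\int_0^\infty G^{\alpha,D}_t(x,y)t^{\sigma-1}\,dt\, h^\alpha_k(y)\,dw_\alpha(y)\\
&=\int_0^\infty t^{\sigma-1} e^{-t\mathcal{L}^D_\alpha} h^\alpha_k(x)\,dt\\
&=\Gamma(\sigma)(2|k|+2|\alpha|+2d)^{-\sigma} h^\alpha_k(x),
\end{align*}
we obtain the desired identification, once Fubini's theorem is justified.

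The Fubini step is the main technical point. To apply it we need $\mathcal{K}^{\alpha,\sigma}_D(x,\cdot)\in L^1(\R,dw_\alpha)$ for each fixed (non-critical) $x$, together with $h^\alpha_k\in L^\infty(\R)$. The latter is obvious from the explicit product form. For the former I would invoke \eqref{compot}, which yields
$$
\int_{\R}\mathcal{K}^{\alpha,\sigma}_D(x,y)\,dw_\alpha(y)\lesssim \sum_{\varepsilon\in\mathbb{Z}_2^d}\int_{\R_+}\mathcal{K}^{\alpha,\sigma}(|x|,y)\,d\mu_\alpha(y)
$$
after using the reflection invariance of $w_\alpha$; the finiteness of the right-hand side has already been observed in the proof of Corollary \ref{cor:neg3}, via \eqref{22}, \eqref{dd_H} and Proposition \ref{pro:comp}.

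Once the $L^2$-identification is in hand, the extension argument is routine. For $1\le p<\infty$ the subspace $L^2(\R,dw_\alpha)\cap L^p(\R,dw_\alpha)$ is dense in $L^p(\R,dw_\alpha)$, and Theorem \ref{thm:dunklunw} provides the bound
$$
\|\mathcal{I}^{\alpha,\sigma}_D f\|_{L^q(dw_\alpha)}\lesssim \|f\|_{L^p(dw_\alpha)},\qquad f\in L^2\cap L^p.
$$
Hence $\mathcal{I}^{\alpha,\sigma}_D$ (which a priori equals $(\mathcal{L}^D_\alpha)^{-\sigma}$ on this dense subspace) has a unique bounded extension $(\mathcal{L}^D_\alpha)_{pq}^{-\sigma}$ from $L^p(\R,dw_\alpha)$ to $L^q(\R,dw_\alpha)$, and this extension coincides with $\mathcal{I}^{\alpha,\sigma}_D$ on all of $L^p(\R,dw_\alpha)$. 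The eigenvalue relation then transfers directly from the analogous identity in Theorem \ref{thm:dunklunw}, completing the proof.
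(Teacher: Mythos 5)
Your proposal is correct and follows the same skeleton as the paper's proof: identify $(\mathcal{L}^D_\alpha)^{-\sigma}$ with $\mathcal{I}^{\alpha,\sigma}_D$ on $L^2(\R,dw_\alpha)$ by checking agreement on the span of the $h^\alpha_k$ via Fubini, then extend by density using Theorem \ref{thm:dunklunw}. The one place you diverge is the Fubini step: you verify $\mathcal{K}^{\alpha,\sigma}_D(x,\cdot)\in L^1(\R,dw_\alpha)$ only for non-critical $x$, whereas the paper devotes the bulk of its proof to the critical points $x$ (those with some $x_i=0$), where \eqref{compot} combined with the argument of Corollary \ref{cor:neg3} no longer applies directly; there the paper produces a separate majorant of the form $g(y_1,\dots,y_n)\,\mathcal{K}^{\widetilde{\alpha},\sigma\slash 2}(|\widetilde{x}|,|\widetilde{y}|)$ and integrates it against $e^{-\|y\|^2\slash 4}dw_\alpha$. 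Your shortcut is nonetheless legitimate for the corollary as stated: since the critical points form a $dw_\alpha$-null set, establishing the pointwise identity $\mathcal{I}^{\alpha,\sigma}_D h^\alpha_k(x)=(2|k|+2|\alpha|+2d)^{-\sigma}h^\alpha_k(x)$ for a.e.\ $x$ already identifies the two operators as elements of $L^2(\R,dw_\alpha)$, which is all the density argument requires. What the paper's extra work buys is the everywhere (rather than almost-everywhere) convergence and evaluation of the defining integral; it is not needed for the $L^p$--$L^q$ extension or the coefficient identity.
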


\begin{proof}
We repeat the reasoning from the proof of Corollary \ref{negative}.
The only point that requires further comments is the application of Fubini's theorem.
To make it possible one has to verify that for each $x \in \R$ and each $k \in \mathbb{N}^d$
$$
\int_{\R} \mathcal{K}_D^{\alpha,\sigma}(x,y) |h_k^{\alpha}(y)| \, dw_{\alpha}(y) < \infty.
$$
When $x$ is not a critical point, this holds because $h_k^{\alpha} \in L^{\infty}(\R)$ and
$\mathcal{K}_D^{\alpha,\sigma}(x,\cdot) \in L^1(\R,dw_{\alpha})$ (the first fact is obvious
by the definition of $h_k^{\alpha}$, and the second one follows taking into account 
\eqref{compot} and that
$\mathcal{K}^{\alpha,\sigma}(|x|,\cdot) \in L^1(\R_+,d\mu_{\alpha})$, as verified in the proof of
Corollary \ref{cor:neg3}).

Assume now that $x\neq (0,\ldots,0)$ is a critical point.
By symmetry reasons we may only consider $x$ of the form
$$
x = (0,\ldots,0,x_{n+1},\ldots,x_d), \qquad x_{n+1}\neq 0,\ldots,x_d \neq 0,
$$
for $n=1,\ldots,d-1$.
We will show that $\mathcal{K}_D^{\alpha,\sigma}(x,\cdot) \in L^1(\R,e^{-\|y\|^2\slash 4}dw_{\alpha})$;
this will be enough since $e^{\|\cdot\|^2\slash 4}h_k^{\alpha} \in L^{\infty}(\R)$.
To do that, we first estimate the heat kernel.
By \eqref{compld} and the product structure of $G_t^{\alpha}(x,y)$, for non-critical $y$ we have
$$
G_t^{\alpha,D}(x,y) \lesssim \bigg( \prod_{i=1}^n (\sinh 2t)^{-\alpha_i-1} 	
\exp\Big( -\frac{1}{2} \coth(2t) y_i^2 \Big) \bigg) 
G_t^{\widetilde{\alpha}}(|\widetilde{x}|,|\widetilde{y}|),
$$
where $G_t^{\widetilde{\alpha}}$ is the Laguerre heat kernel in $\mathbb{R}^{d-n}$,
$\widetilde{\alpha}=(\alpha_{n+1},\ldots,\alpha_d)$, $\widetilde{x}=(x_{n+1},\ldots,x_d)$
and $\widetilde{y}=(y_{n+1},\ldots,y_d)$ (notice that by assumption $\widetilde{x}$ is not a critical
point in $\mathbb{R}^{d-n}$). Consequently,
\begin{align*}
\mathcal{K}_D^{\alpha,\sigma}(x,y) \lesssim & \; \int_0^1 \bigg( \prod_{i=1}^n t^{-\alpha_i-1}
	e^{-y_i^2\slash (4t)} t^{\sigma \slash (2n)} \bigg) 
	G_t^{\widetilde{\alpha}}(|\widetilde{x}|,|\widetilde{y}|) t^{\sigma\slash 2 -1} dt \\
	& + \; \int_1^{\infty} \bigg( \prod_{i=1}^n e^{-2t(\alpha_i+1)} e^{-y_i^2\slash 2} \bigg)
	G_t^{\widetilde{\alpha}}(|\widetilde{x}|,|\widetilde{y}|) t^{\sigma -1} dt \\
\lesssim &\; \bigg(\prod_{i=1}^n |y_i|^{-(2\alpha_i+2)+\sigma\slash n}\bigg) \int_0^1
	G_t^{\widetilde{\alpha}}(|\widetilde{x}|,|\widetilde{y}|) t^{\sigma\slash 2 -1} dt \\
	& + \; \bigg( \prod_{i=1}^n e^{-y_i^2\slash 2}\bigg) \int_1^{\infty}
	G_t^{\widetilde{\alpha}}(|\widetilde{x}|,|\widetilde{y}|) t^{\sigma\slash 2 -1} dt \\
\le &\; g(y_1,\ldots,y_n) \,
\mathcal{K}^{\widetilde{\alpha},\sigma\slash 2}(|\widetilde{x}|,|\widetilde{y}|),
\end{align*}
where $g(y_1,\ldots,y_n) = \prod_{i=1}^n |y_i|^{-(2\alpha_i+2)+\sigma\slash n}
	+ \prod_{i=1}^n e^{-y_i^2\slash 2}$. Since $g$ is integrable against the measure
$\prod_{i=1}^n |y_i|^{2\alpha_i+1} e^{-y_i^2\slash 4}dy_i$ and
$\mathcal{K}^{\widetilde{\alpha},\sigma\slash 2}(|\widetilde{x}|,\cdot) \in L^1(\mathbb{R}^{d-n},
d\mu_{\widetilde{\alpha}})$, the conclusion follows.

Finally, for $x=(0,\ldots,0)$ and non-critical $y$ we have
$$
\mathcal{K}_D^{\alpha,\sigma}(x,y) \lesssim g(y_1,\ldots,y_d),
$$
hence again $\mathcal{K}_D^{\alpha,\sigma}(x,\cdot) \in L^1(\R,e^{-\|y\|^2\slash 4}dw_{\alpha})$.
\end{proof}

\section{Final observations and remarks} \label{sec:final}

It is interesting to observe that in dimension one the weighted results from Theorem \ref{thm:main}
imply new weighted results for the potentials $\mathcal{I}_H^{\alpha,\sigma}$, which are different from
those in Theorem \ref{thm:lag_Hw}. Here are the details. 
Recall that the operator $S_{\alpha}$ given by $S_{\alpha}f(x)=x^{\alpha+1\slash 2}f(x)$
intertwines the potential operators in both settings, 
$$
\mathcal{I}_H^{\alpha,\sigma} \circ S_{\alpha} = S_{\alpha} \circ \mathcal{I}^{\alpha,\sigma},
$$
and the $L^p(Ud\mu_\alpha)\to L^q(Vd\mu_\alpha)$ estimate for $\mathcal{I}^{\alpha,\sigma}$ is equivalent to
the $L^p(\widetilde{U})\to L^q(\widetilde{V})$ estimate for $\mathcal{I}_H^{\alpha,\sigma}$, where
$$
\widetilde{U}(x) = U(x) x^{-(\alpha+1\slash 2)p + 2\alpha+1}, \qquad 
\widetilde{V}(x) = V(x) x^{-(\alpha+1\slash 2)q + 2\alpha+1}.
$$
This allows to translate directly the results of Theorem \ref{thm:main} (specified to $d=1$)
to the setting of Laguerre expansions of Hermite type. Thus we get the following.

\begin{propo} \label{prop:ell}
Assume that $d=1$ and $\alpha \ge -1\slash 2$. 
Let $\sigma > 0$, $1<p \le q < \infty$ and 
$$
A < \frac{1}{p'}+\alpha+\frac{1}{2}, \qquad 
B < \frac{1}{q} + \alpha+\frac{1}{2}, \qquad A+B \ge (2\alpha+1)\bigg(\frac{1}{p}-\frac{1}{q}\bigg).
$$
\begin{itemize}
\item[(i)]
If $\sigma \ge \alpha+1$, then $\mathcal{I}^{\alpha,\sigma}_H$ maps boundedly 
$L^p(\mathbb{R}_+,x^{A p})$ into $L^q(\mathbb{R}_+,x^{-B q})$. 
\item[(ii)]
If $\sigma < \alpha+1$, then the same boundedness is true under the additional condition
\begin{equation*}
\frac{1}{q} \ge \frac{1}{p} + A+B - 2\sigma.
\end{equation*}
\end{itemize}
\end{propo}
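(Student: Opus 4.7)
The plan is to transfer the weighted $L^p$--$L^q$ boundedness for $\mathcal{I}^{\alpha,\sigma}$ provided by Theorem \ref{thm:main} (specialized to $d=1$) to the Hermite-type Laguerre potential $\mathcal{I}_H^{\alpha,\sigma}$ via the intertwining isometry $S_\alpha$, $S_\alpha f(x)=x^{\alpha+1/2}f(x)$. Concretely, I would set $g=S_\alpha f$ in the identity $\mathcal{I}_H^{\alpha,\sigma}\circ S_\alpha = S_\alpha\circ \mathcal{I}^{\alpha,\sigma}$, so that $\mathcal{I}_H^{\alpha,\sigma}g = S_\alpha \mathcal{I}^{\alpha,\sigma}(x^{-\alpha-1/2}g)$, and then rewrite both sides of the desired inequality $\|\mathcal{I}_H^{\alpha,\sigma}g\|_{L^q(x^{-Bq})}\lesssim \|g\|_{L^p(x^{Ap})}$ in terms of $f$ and the measure $d\mu_\alpha$.

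A direct change of variables shows that the claimed inequality is equivalent to $\|\mathcal{I}^{\alpha,\sigma}f\|_{L^q(x^{-bq}d\mu_\alpha)}\lesssim \|f\|_{L^p(x^{ap}d\mu_\alpha)}$, with
\[
a = A + \alpha + \tfrac{1}{2} - \tfrac{2\alpha+1}{p}, \qquad
b = B - \alpha - \tfrac{1}{2} + \tfrac{2\alpha+1}{q}.
\]
So the task reduces to checking that the hypotheses imposed on $(A,B,\sigma,p,q)$ in Proposition \ref{prop:ell} are exactly the hypotheses imposed on $(a,b,\sigma,p,q)$ in the $d=1$ case of Theorem \ref{thm:main}. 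A short algebraic computation confirms this:
$A < 1/p' + \alpha + 1/2$ translates to $a < (2\alpha+2)/p'$;
$B < 1/q + \alpha+1/2$ translates to $b < (2\alpha+2)/q$;
$A+B\ge (2\alpha+1)(1/p-1/q)$ becomes $a+b\ge 0$;
the threshold $\sigma\ge |\alpha|+d=\alpha+1$ is verbatim the same; and the secondary constraint $1/q\ge 1/p + A+B-2\sigma$ of case (ii) is, after multiplying by $2\alpha+2$ and collecting terms, exactly $1/q\ge 1/p - (2\sigma-a-b)/(2\alpha+2)$.

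Once this dictionary is verified, the conclusion of Proposition \ref{prop:ell} follows immediately from the corresponding assertion of Theorem \ref{thm:main}. The only real work is the bookkeeping in verifying the equivalence of hypotheses; there is no genuine analytic obstacle, because the intertwining identity for $S_\alpha$ is exact and $S_\alpha$ is an isometry between the natural $L^2$ spaces, so the $L^p$--$L^q$ transfer is lossless. I would also note in passing that the same argument, run with $(A,B)=(0,0)$, recovers the unweighted bounds of Theorem \ref{thm:lag_H} in dimension one as a special case, which provides a useful sanity check.
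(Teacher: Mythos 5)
Your proof is correct and is exactly the paper's argument: the paper proves Proposition \ref{prop:ell} by the same change of weights under the intertwining $\mathcal{I}_H^{\alpha,\sigma}\circ S_\alpha = S_\alpha\circ\mathcal{I}^{\alpha,\sigma}$, and your dictionary $a=A+\alpha+\tfrac12-\tfrac{2\alpha+1}{p}$, $b=B-\alpha-\tfrac12+\tfrac{2\alpha+1}{q}$ checks out against all the hypotheses of Theorem \ref{thm:main} with $d=1$. One caveat: your closing ``sanity check'' is false --- with $A=B=0$ the constraint $A+B\ge(2\alpha+1)(\tfrac1p-\tfrac1q)$ fails whenever $\alpha>-\tfrac12$ and $p<q$, so the unweighted bounds of Theorem \ref{thm:lag_H} are \emph{not} recovered this way; the paper explicitly makes this point immediately after the proposition, noting that Theorems \ref{thm:lag_Hw} and \ref{thm:main} are genuinely independent.
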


Notice that here the unweighted case $A=B=0$ is not admitted when $\alpha > -1\slash 2$ and $p<q$,
whereas it is covered by Theorem \ref{thm:lag_Hw}. Also, the splittings into cases (i) and (ii) 
in Proposition \ref{prop:ell} and Theorem \ref{thm:lag_Hw} are different.
In fact, by the abovementioned equivalence, Theorems \ref{thm:lag_Hw} and \ref{thm:main}
are independent in the sense that neither of them follows from the other one.
In particular, at least in dimension one, Theorem \ref{thm:lag_Hw} implies some weighted $L^p-L^q$ estimates
for $\mathcal{I}^{\alpha,\sigma}$ which are not covered by Theorem \ref{thm:main}.
This indicates that Theorem \ref{thm:main} is not optimal in the sense of admissible
power weights. 

In \cite{KS} Kanjin and E. Sato proved a fractional integration theorem for one-dimensional
expansions with respect to the so-called standard system of Laguerre functions
$\{\mathcal{L}_k^{\alpha}\}_{k\ge0}$, where
$$
\mathcal{L}_k^{\alpha}(x)=\left(\frac{\Gamma (k+1)}{\Gamma(k+\alpha +1)}\right)
^{1 \slash 2} L_{k}^{\alpha} (x) x^{\alpha\slash 2}e^{-{x}/{2}}.
$$
This system is orthonormal and complete in $L^2(\mathbb R_+)$. 
The fractional integral operator $I^{\alpha}_{\sigma}$ considered
in \cite{KS} is defined by
\begin{equation}\label{Kan}
I^\alpha_\sigma f\sim \sum_{k=1}^\infty \frac 1{k^\sigma} a_k^\alpha(f)\mathcal{L}_k^{\alpha},
\end{equation}
where $a_k^\alpha(f)=\int_0^\infty f(x)\mathcal{L}_k^{\alpha}(x)\,dx$
are the Fourier-Laguerre coefficients of $f$ (provided they exist), and the sign $\sim$ means that
the coefficients of a function on the left-hand side of \eqref{Kan}
coincide with those appearing on the right-hand side.

The main result of \cite{KS} says that, given $0<\sigma<1$ and $\alpha>-1$, the inequality
\begin{equation}\label{Kan2}
\|I^\alpha_\sigma f\|_q \lesssim \|f\|_p
\end{equation}
holds provided that $1<p<q<\infty$ and $\frac1q=\frac1p-\sigma$, with the additional restriction
$(1+\frac\alpha2)^{-1}<p<q<-2/\alpha$ in the case when $-1<\alpha<0$ (this restriction in particular 
guarantees existence of the coefficients $a_k^\alpha(f)$ and $a_k^\alpha(g)$ for every 
$f\in L^p(\mathbb R_+)$ and every $g\in L^q(\mathbb R_+)$).
We shall briefly see that for $\alpha \ge -1\slash 2$ our present results extend in several directions
the result of Kanjin and Sato. To this end we always assume that $d=1$. 

Each $\mathcal{L}_k^{\alpha}$ is an eigenfunction of the differential operator
$$
\mathbb{L}_{\alpha} = - x \frac{d^2}{dx^2} - \frac{d}{dx} + \frac{x}{4} + \frac{\alpha^2}{4x},
$$
which is formally symmetric and positive in $L^2(\mathbb{R}_+)$, 
and the corresponding eigenvalue is $k+\alpha\slash 2 + 1\slash 2$. 
Moreover, $\mathbb{L}_{\alpha}$ has the natural self-adjoint extension
in $L^2(\mathbb{R}_{+})$ for which the spectral decomposition is given by the $\mathcal{L}_k^{\alpha}$.
We consider the potential operator 
$$
\mathcal{I}_S^{\alpha,\sigma} f(x) = \int_0^{\infty} \mathcal{K}_S^{\alpha,\sigma}(x,y) f(y) \, dy,
$$
where, as in the other settings, the potential kernel is obtained by integrating the associated heat
kernel,
$$
\mathcal{K}_S^{\alpha,\sigma}(x,y) = \frac{1}{\Gamma(\sigma)} \int_0^{\infty} G_t^{\alpha,S}(x,y)
	t^{\sigma-1} \, dt.
$$
Since $\varphi_k^{\alpha}(x) = \sqrt{2x}\mathcal{L}_k^{\alpha}(x^2)$ and the eigenvalues in both
settings coincide up to the constant factor $4$, it follows that 
$G_t^{\alpha,H}(x,y) = 2\sqrt{xy} G_{4t}^{\alpha,S}(x^2,y^2)$ and, consequently,
$\mathcal{K}_H^{\alpha,\sigma}(x,y) = 2^{-2\sigma+1} \sqrt{xy} \mathcal{K}_S^{\alpha,\sigma}(x^2,y^2)$.
This shows that the potential operators in the settings of standard Laguerre expansions and Laguerre
expansions of Hermite type are related, namely
$$
P \circ \mathcal{I}_S^{\alpha,\sigma} = 2^{2\sigma} \mathcal{I}_H^{\alpha,\sigma} \circ P,
$$
where the linking transformation is given by $Pf(x) = \sqrt{x}f(x^2)$, $x>0$.
We now see that the $L^p(U) \to L^q(V)$ estimate for $\mathcal{I}_S^{\alpha,\sigma}$ is equivalent
to the $L^p(\widehat{U}) \to L^q(\widehat{V})$ estimate for $\mathcal{I}_H^{\alpha,\sigma}$,
where
$$
\widehat{U}(x) = U(x^2) x^{1-p\slash 2}, \qquad \widehat{V}(x) = V(x^2) x^{1-q\slash 2}.
$$
This allows to conclude immediately the following result from Proposition \ref{prop:ell}.
\begin{propo} \label{prop:mL} 
Assume that $d=1$ and $\alpha \ge -1\slash 2$. 
Let $\sigma > 0$, $1<p \le q < \infty$ and 
$$
A < \frac{1}{p'}+\frac{\alpha}{2}, \qquad 
B < \frac{1}{q} + \frac{\alpha}{2}, \qquad A+B \ge \alpha \bigg(\frac{1}{p}-\frac{1}{q}\bigg).
$$
\begin{itemize}
\item[(i)]
If $\sigma \ge \alpha+1$, then $\mathcal{I}^{\alpha,\sigma}_S$ maps boundedly 
$L^p(\mathbb{R}_+,x^{A p})$ into $L^q(\mathbb{R}_+,x^{-B q})$. 
\item[(ii)]
If $\sigma < \alpha+1$, then the same boundedness is true under the additional condition
\begin{equation} \label{cnd72}
\frac{1}{q} \ge \frac{1}{p} + A+B - \sigma.
\end{equation}
\end{itemize}
\end{propo}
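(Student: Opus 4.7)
The plan is to deduce Proposition \ref{prop:mL} directly from Proposition \ref{prop:ell} via the intertwining relation $P\circ\mathcal{I}_S^{\alpha,\sigma}=2^{2\sigma}\mathcal{I}_H^{\alpha,\sigma}\circ P$ with $Pf(x)=\sqrt{x}f(x^2)$ established in the paragraph preceding the statement, together with the resulting equivalence: the $L^p(U)\to L^q(V)$ estimate for $\mathcal{I}_S^{\alpha,\sigma}$ is the same as the $L^p(\widehat U)\to L^q(\widehat V)$ estimate for $\mathcal{I}_H^{\alpha,\sigma}$, where $\widehat U(x)=U(x^2)x^{1-p/2}$ and $\widehat V(x)=V(x^2)x^{1-q/2}$.

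First I would specialize this equivalence to the power weights $U(x)=x^{Ap}$, $V(x)=x^{-Bq}$, obtaining $\widehat U(x)=x^{2Ap+1-p/2}$ and $\widehat V(x)=x^{-2Bq+1-q/2}$. Writing these in the form required by Proposition \ref{prop:ell} as $x^{A'p}$ and $x^{-B'q}$ forces the substitution $A'=2A+1/p-1/2$, $B'=2B-1/q+1/2$.

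Next I would check that the assumptions on $(A,B)$ in Proposition \ref{prop:mL} are equivalent to those on $(A',B')$ in Proposition \ref{prop:ell}, a purely algebraic verification: $A'<1/p'+\alpha+1/2$ rearranges to $A<1/p'+\alpha/2$; $B'<1/q+\alpha+1/2$ rearranges to $B<1/q+\alpha/2$; and $A'+B'=2(A+B)+1/p-1/q$, so the inequality $A'+B'\ge(2\alpha+1)(1/p-1/q)$ from Proposition \ref{prop:ell} becomes exactly $A+B\ge\alpha(1/p-1/q)$. The threshold $\sigma\ge\alpha+1$ separating cases (i) and (ii) is identical in both propositions, and the extra condition $1/q\ge1/p+A'+B'-2\sigma$ of case (ii) in Proposition \ref{prop:ell} reduces, after substituting the expression for $A'+B'$, precisely to \eqref{cnd72}.

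Once this correspondence is in place, Proposition \ref{prop:ell} applied with parameters $(A',B',\sigma,\alpha,p,q)$ delivers the boundedness $\mathcal{I}_H^{\alpha,\sigma}\colon L^p(x^{A'p})\to L^q(x^{-B'q})$, which via the intertwining identity is equivalent to the desired boundedness of $\mathcal{I}_S^{\alpha,\sigma}$ from $L^p(\mathbb{R}_+,x^{Ap})$ to $L^q(\mathbb{R}_+,x^{-Bq})$. The proof is essentially a change of variables; there is no genuine obstacle, only the routine bookkeeping of matching the four parameter conditions and the case split, which I would carry out carefully to ensure no off-by-$1/2$ mistake creeps into the translation between the two systems.
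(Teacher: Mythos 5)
Your proposal is correct and is precisely the paper's argument: the paper deduces Proposition \ref{prop:mL} "immediately" from Proposition \ref{prop:ell} via the relation $P\circ\mathcal{I}_S^{\alpha,\sigma}=2^{2\sigma}\mathcal{I}_H^{\alpha,\sigma}\circ P$ and the weight transformation $\widehat{U},\widehat{V}$, and your bookkeeping ($A'=2A+1/p-1/2$, $B'=2B-1/q+1/2$, with the four conditions and the case split translating exactly) checks out.
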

Taking into account the fact that the kernel $\mathcal{K}_S^{\alpha,\sigma}(x,y)$ is decreasing
with respect to the index $\alpha \ge 0$ (this follows 
from the analogous monotonicity of the involved Bessel function $I_{\alpha}$, see for instance
the proof of \cite[Proposition 2.1]{NS2}) we get the following.
\begin{corollary} \label{cor:mL} 
Assume that $d=1$ and $\alpha \ge 0$. 
Let $\sigma > 0$, $1<p \le q < \infty$ and 
$$
A < \frac{1}{p'}, \qquad B < \frac{1}{q}, \qquad A+B \ge 0.
$$
\begin{itemize}
\item[(i)]
If $\sigma \ge 1$, then $\mathcal{I}^{\alpha,\sigma}_S$ maps boundedly 
$L^p(\mathbb{R}_+,x^{A p})$ into $L^q(\mathbb{R}_+,x^{-B q})$. 
\item[(ii)]
If $\sigma < 1$, then the same boundedness is true under condition \eqref{cnd72}.
\end{itemize}
\end{corollary}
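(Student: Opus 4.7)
The plan is to reduce the statement to the case $\alpha=0$ of Proposition \ref{prop:mL}, exploiting the monotonicity of the kernel $\mathcal{K}_S^{\alpha,\sigma}(x,y)$ with respect to $\alpha \ge 0$ mentioned just before the corollary. Recall that this monotonicity comes from the analogous monotonicity of the modified Bessel function $\alpha \mapsto I_{\alpha}(z)$ for $z>0$ and $\alpha \ge 0$, applied inside the heat kernel representation and then integrated against $t^{\sigma-1}\,dt$. First, I would write explicitly
$$
0 < \mathcal{K}_S^{\alpha,\sigma}(x,y) \le \mathcal{K}_S^{0,\sigma}(x,y), \qquad x,y\in\mathbb{R}_+, \quad \alpha \ge 0,
$$
which immediately yields the pointwise domination
$$
|\mathcal{I}_S^{\alpha,\sigma}f(x)| \le \mathcal{I}_S^{0,\sigma}|f|(x), \qquad x\in\mathbb{R}_+,
$$
for every $f$ in the natural domain.

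Next, I would integrate this pointwise bound against the weight $x^{-Bq}\,dx$ to obtain
$$
\|\mathcal{I}_S^{\alpha,\sigma}f\|_{L^q(\mathbb{R}_+,x^{-Bq})} \le \|\mathcal{I}_S^{0,\sigma}|f|\|_{L^q(\mathbb{R}_+,x^{-Bq})},
$$
and then apply Proposition \ref{prop:mL} with $\alpha$ replaced by $0$. The key check is that the hypotheses of the corollary coincide precisely with those of Proposition \ref{prop:mL} at $\alpha=0$: the inequalities $A < 1/p' + \alpha/2$, $B < 1/q + \alpha/2$, $A+B \ge \alpha(1/p - 1/q)$ collapse to $A < 1/p'$, $B < 1/q$, $A+B \ge 0$; the dichotomy $\sigma \ge \alpha+1$ versus $\sigma < \alpha+1$ collapses to $\sigma \ge 1$ versus $\sigma < 1$; and condition \eqref{cnd72} is inherited verbatim. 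Proposition \ref{prop:mL} then furnishes
$$
\|\mathcal{I}_S^{0,\sigma}|f|\|_{L^q(\mathbb{R}_+,x^{-Bq})} \lesssim \big\||f|\big\|_{L^p(\mathbb{R}_+,x^{Ap})} = \|f\|_{L^p(\mathbb{R}_+,x^{Ap})},
$$
which chains with the previous inequality to finish the proof.

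There is no real obstacle here; the corollary is a clean consequence of the kernel monotonicity combined with Proposition \ref{prop:mL}. The only point deserving attention is that the pointwise bound $\mathcal{K}_S^{\alpha,\sigma} \le \mathcal{K}_S^{0,\sigma}$ genuinely requires $\alpha \ge 0$ (not merely $\alpha \ge -1/2$), which is why the corollary is formulated under the stronger restriction on $\alpha$; this is reflected exactly in the hypothesis. Once that monotonicity is in place, the proof is a two-line reduction.
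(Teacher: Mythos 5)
Your proposal is correct and follows exactly the paper's route: the paper derives the corollary from Proposition \ref{prop:mL} at $\alpha=0$ via the monotonicity of $\mathcal{K}_S^{\alpha,\sigma}(x,y)$ in $\alpha\ge 0$ (inherited from the monotonicity of $I_\alpha$), which is precisely your pointwise domination argument. Your explicit check that the hypotheses of Proposition \ref{prop:mL} collapse at $\alpha=0$ to those of the corollary is a welcome elaboration of what the paper leaves implicit.
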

Now, with the aid of a suitable multiplier theorem (for example, the result of D\l{}ugosz \cite{Dl}
for integer $\alpha$ combined with Kanjin's transplantation theorem \cite{Ka} gives sufficiently general
multiplier theorem for all $\alpha>-1$),
the aforementioned result of Kanjin and Sato can be recovered as a special case of
Proposition \ref{prop:mL} for $\alpha \in [-1\slash 2, 0]$, or Corollary \ref{cor:mL} when $\alpha \ge 0$.
On the other hand, it is perhaps interesting to note that this result does not follow
from Theorem \ref{thm:lag_Hw}. 
We remark that the result of Kanjin and Sato for the full range of $\alpha \in (-1,\infty)$
can be recovered by specifying Proposition \ref{prop:mL} to $\alpha=0=A=B$, using a multiplier
theorem, and then applying Kanjin's transplantation theorem.

Another comment concerns the very recent paper by Bongioanni and Torrea \cite{BT2}.
The authors obtain there, among many other results, power weighted $L^p$-boundedness of potential
operators related to one-dimensional Laguerre function systems, see 
\cite[Theorem 7, Proposition 2]{BT2} and remarks closing \cite[Section 5]{BT2}.
Our present results contain those of \cite{BT2} on mapping properties of Laguerre potentials
as special cases, at least when $\alpha \ge -1\slash 2$.
Indeed, for such $\alpha$ \cite[Theorem 7]{BT2} follows by specifying
Proposition \ref{prop:mL} to $p=q$ and $B=-A$.

Still another comment explains, to some extent, why $L^p-L^q$ boundedness of the Hermite potential 
operator $\mathcal{I}^{\sigma}$ should be expected for all $p,q\in[1,\infty]$, provided that $\sigma$ 
is large. This 'phenomenon' should be linked with the ultracontractivity property of the Hermite 
semigroup $\{e^{-t\mathcal H}\}_{t>0}$: given $p,q\in[1,\infty]$, $e^{-t\mathcal H}$ maps $L^p(\R)$ 
into  $L^q(\R)$ boundedly. Thus we can expect a similar property for the average 
$\mathcal H^{-\sigma}=\Gamma(\sigma)^{-1}\int_0^\infty e^{-t\mathcal H}t^{\sigma-1}\,dt$, 
of course having a hope for a good control of the corresponding operator norms of $e^{-t\mathcal H}$. 
This indeed happens. For $t\ge1$ a crude estimate based on bounds of $L^p$-norms of the Hermite 
functions gives $\|e^{-t\mathcal H}\|_{L^p\to L^q}\le C_{pq}e^{-td}$, $t\ge1$,  $p,q\in[1,\infty]$ 
(see \cite[Remark 2.11]{ST1}), hence 
$\int_1^\infty \|e^{-t\mathcal H}\|_{L^p\to L^q}t^{\sigma-1}\,dt<\infty$ follows for $\sigma>0$. 
On the other hand, we will show that for  $p,q\in[1,\infty]$,
\begin{equation}\label{finn} 
\|e^{-t\mathcal H}\|_{L^p\to L^q}\le C_{pq}t^{-d/2},\qquad 0<t\le 1,
\end{equation}
hence  $\int_0^1 \|e^{-t\mathcal H}\|_{L^p\to L^q}t^{\sigma-1}\,dt<\infty$ for $\sigma>d/2$, 
and thus $\|\mathcal H^{-\sigma}\|_{L^p\to L^q}<\infty$ for $\sigma>d/2$. To verify \eqref{finn} 
it is sufficient to consider only the cases $p=\infty,q=1$ and $p=1,q=\infty$; in the cases $p=q=1$ 
and $p=q=\infty$, \eqref{finn} is trivially satisfied ($\{e^{-t\mathcal H}\}_{t>0}$ is a semigroup of
contractions on $L^p(\R)$, $1\le p\le\infty$, see \cite[Remark 2.10]{ST1}), and then interpolation does 
the job.  Checking \eqref{finn} is easy for $p=1,q=\infty$: when estimating the relevant operator norm 
it suffices to use the estimate $G_t(x,y)\le Ct^{-d/2}$. Proving \eqref{finn} for $p=\infty,q=1$, 
reduces to checking that
\begin{equation*}
\int_{\R}\int_{\R} \exp\Big(-\frac18t\|x+y\|^2-\frac1{4t}\|x-y\|^2\Big)dx\,dy\le C,\qquad 0<t\le 1.
\end{equation*}
Since the variables may be separated it is sufficient to consider the case $d=1$. 
Then the desired estimate follows by the change of variables $u=x+y$, $v=x-y$.

Finally, we comment on relations between the classical Bessel potentials in $\R$ and the potential
operators studied in this paper, and point out important consequences of these relations.
The Bessel potentials $J^{\sigma}$ are associated to the powers $(\textrm{Id}-\Delta)^{-\sigma\slash 2}$
in the analogous way as the Riesz potentials $I^{\sigma}$ correspond to $(-\Delta)^{-\sigma\slash 2}$,
see \cite{AS}.
For $\sigma > 0$ they are given as convolutions
$$
J^{\sigma} = \mathcal{G}^{\sigma} * f
$$
with the radial kernel
$$
\mathcal{G}^{\sigma}(x) = K_{\frac{d-\sigma}{2}}\big(\|x\|\big) \|x\|^{\frac{\sigma-d}{2}}, \qquad x \in \R,
$$
where $K_{\nu}$ is the modified Bessel function of the third kind and order $\nu$, usually referred to as
McDonald's function, cf. \cite[Chapter 5]{Leb}.
The kernels $\mathcal{G}^{\sigma}$
share many properties of the Riesz kernels
$\|x\|^{\sigma-d}$ (this in particular concerns positiveness), but, in contrast with the Riesz kernels,
they decay exponentially at infinity; this last feature makes a significant difference in mapping properties
of the two kinds of potentials. By basic properties of McDonald's function 
(see \cite[Ch.\,2, Sec.\,3]{AS}) it is easily seen that $\mathcal{G}^{2\sigma}$ behaves very similarly
to the majorization kernel $K^{\sigma}$ used in the Hermite setting in Section \ref{sec:herm}.
In fact, given $\sigma >0$, $K^{\sigma}$ can be controlled pointwise by $\mathcal{G}^{2\sigma}$,
see \cite[Ch.\,2, Sec.\,4]{AS}. Thus, for nonnegative functions $f$ in $\R$,
$$
\mathcal{I}^{\sigma}f(x) \lesssim J^{2\sigma}f(x) \lesssim I^{2\sigma}f(x), \qquad x \in \R,
$$
and therefore any weighted $L^p-L^q$ boundedness of $J^{2\sigma}$ (or $I^{2\sigma}$)
is inherited by the Hermite
potential operator $\mathcal{I}^{\sigma}$. Clearly, by the methods presented in this paper,
further consequences of this concern (a bit less explicitly) the Laguerre potentials
$\mathcal{I}_H^{\alpha,\sigma}$, $\mathcal{I}^{\alpha,\sigma}$ and $\mathcal{I}_S^{\alpha,\sigma}$,
and the Dunkl potentials.


\end{document}